\documentclass[12pt]{amsproc}
\usepackage{amsmath, amssymb, amsthm, mathrsfs}

\usepackage[
a4paper,
left=27mm,
right=27mm,
top=27mm,
bottom=30mm
]{geometry}

\usepackage[colorlinks, backref]{hyperref}
\usepackage{amsrefs}
\usepackage{graphicx}
\usepackage{microtype}
\usepackage{tikz,tikz-cd}

\usepackage[T1]{fontenc}
\usepackage{lmodern}

\theoremstyle{plain}
\newtheorem{theorem}{Theorem}[section]

\newtheorem{lemma}[theorem]{Lemma}
\newtheorem{corollary}[theorem]{Corollary}
\newtheorem{proposition}[theorem]{Proposition}

\theoremstyle{definition}
\newtheorem{definition}[theorem]{Definition}
\newtheorem{notation}[theorem]{Notation}

\theoremstyle{remark}
\newtheorem{remark}[theorem]{Remark}

\newcommand{\cP}{{\mathcal P}}
\newcommand{\cG}{\mathcal{G}}
\newcommand{\bbN}{{\mathbb N}}
\newcommand{\bbR}{{\mathbb R}}
\newcommand{\bbC}{{\mathbb C}}
\newcommand{\bbZ}{{\mathbb Z}}
\newcommand{\pow}{\mathscr{P}}
\DeclareMathOperator{\Ad}{Ad}
\DeclareMathOperator{\ad}{ad}
\DeclareMathOperator{\Aut}{\mathrm{Aut}}
\DeclareMathOperator{\Fin}{\mathrm{Fin}}
\newcommand{\cU}{\mathcal U}

\DeclareMathOperator{\antr}{antr}
\DeclareMathOperator{\ntr}{ntr}
\DeclareMathOperator{\tr}{tr}
\DeclareMathOperator{\Alt}{Alt}

\DeclareMathOperator{\U}{\mathrm{U}}
\DeclareMathOperator{\M}{\mathrm{M}}
\DeclareMathOperator{\MHS}{\mathrm{M}^{\mathrm{HS}}_\infty}
\DeclareMathOperator{\UHS}{\mathrm{U}^{\mathrm{HS}}}

\newcommand{\dmet}{d}
\newcommand{\dHS}{d^{\mathrm{HS}}} 
\newcommand{\dHSk}[1]{d^{\mathrm{HS}}_{#1}} 
\newcommand{\dHSn}{d^{\mathrm{HS}}_n} 
\newcommand{\dHSinf}{d^{\mathrm{HS}}_\infty} 
\DeclareMathOperator{\PU}{\mathrm{PU}}
\DeclareMathOperator{\SU}{\mathrm{SU}}
\DeclareMathOperator{\vol}{\mathrm{vol}}
\DeclareMathOperator{\dist}{\mathrm{dist}}
\DeclareMathOperator{\diam}{\mathrm{diam}}
\newcommand{\seqtoinf}{(\bbN^\bbN)_\infty}

\title[Rigidity in trace norm]{Trace-norm rigidity for reduced products of \\ unitary groups and matrix algebras}

\author{Ben De Bondt}
\address{Ben De Bondt, Universität Münster, 48149 Münster, Germany}
\email{bdebondt@uni-muenster.de}

\author{Andreas Thom}
\address{Andreas Thom, TU Dresden, 01062 Dresden, Germany}
\email{andreas.thom@tu-dresden.de}

\begin{document}
	
	\begin{abstract}
		We study homomorphisms, with a focus on isomorphisms, between the tracial metric reduced products of finite dimensional unitary groups and of matrix algebras. A variant of Ulam stability for unitary groups and a classification of the almost surjective continuous homomorphisms between finite dimensional unitary groups are proved and then used to show that all isomorphisms of product form of these tracial reduced products are induced by almost permutations of the coordinates and coordinatewise application of automorphisms. 
		We prove coordinate recognition for these reduced products and obtain under set theoretic assumptions rigidity and classification results for their full automorphism groups. For tracial reduced matrix algebras we obtain such rigidity result in the more general context of center-preserving $*$-homomorphisms.
	\end{abstract}
	
	\maketitle
	\tableofcontents
	
	\section{Introduction}
	Isomorphisms between metric reduced products of finite symmetric groups were studied in~\cite{debondtthom}, where it was shown that such isomorphisms can consistently all be obtained, up to canonical identification of asymptotically equivalent dimensions, simply from composition of inner automorphisms and suitable almost permutations of $\bbN$, whereas in contrast, under the continuum hypothesis $\mathsf{CH}$, these isomorphisms exhibit much wilder behaviour.
	As remarked in~\cite{debondtthom}, it is very natural to ask whether the results obtained there for universal sofic groups of reduced product type translate to universal hyperlinear groups. We thus pursue here the natural continuation of this work, studying metric reduced products with respect to metrics induced by the normalized \emph{Hilbert--Schmidt norm}, or \emph{trace norm}:
	\[
	\|x\|_{2,n}=\tau_n(x^*x)^{1/2},
	\qquad \tau_n=\frac{1}{n}\operatorname{tr},
	\]
	for $x\in \M_n(\mathbb C)$. Closely related to the questions we address here, there is inspiring previous work in the operator norm setting, see \cite{Farah-AC, Farah-C, MR3971570, McKenney-Vignati} and the more recent \cite{AlekFaraThom}, yet on a technical level, the tracial phenomena we study require different and new ideas.
	
	For a sequence $(k_n)_n$ with $k_n\to\infty$, we
	write
	$
	\UHS[(k_n)_n]
	$
	for the reduced product of the groups $\U(k_n)$ (the group of unitary complex $k_n \times k_n$-matrices) with respect to the
	normalized Hilbert--Schmidt metric.  We first study isomorphisms of product form between such groups, that is, isomorphisms implemented coordinatewise by maps
	$h_n:\U(k_n)\to \U(l_n)$.  Proposition~\ref{prop_stab} shows that if
	\[
	\UHS[(k_n)_n]\cong \UHS[(l_n)_n]
	\]
	by such a product-form isomorphism, then $k_n/l_n\to 1$.  Moreover, after
	identifying asymptotically equivalent dimensions, every product-form
	automorphism is induced coordinatewise by automorphisms of finite unitary
	groups.  The proof combines compact-group stability (Corollary~\ref{app:cor-dCOT-sequential}) with a classification
	result (Theorem~\ref{th_stab}) for almost surjective homomorphisms $\U(n)\to \U(m)$, that is of independent interest and is the subject of Section~\ref{sec:almost surj cont hom} of the paper.
	
	We then prove a coordinate-recognition theorem for arbitrary group
	isomorphisms between such reduced products.  Theorem~\ref{th.action boundary}
	shows that every isomorphism
	\[
	\UHS[(k_n)_n]\longrightarrow \UHS[(l_n)_n]
	\]
	induces an automorphism of the Stone--\v{C}ech boundary $\partial\beta\mathbb N$
	and transports according to this
	automorphism the pseudometrics on $\UHS[(k_n)_n]$ that measure the distance between elements of $\UHS[(k_n)_n]$ after restricting them to some infinite set of coordinates.  This part of the argument
	uses two group-theoretic features of finite unitary groups: uniform bounded
	normal generation modulo the center (\cite{dowerkthom}*{Theorem~1.1 and Lemma~7.8}, see Lemma~\ref{th.boundedgeneration}), and the definability of the absolute
	normalized trace on involutions (Lemma~\ref{flower}).  These allow one to recover the Boolean
	algebra of coordinate supports from the abstract group structure.
	
	We next put all this together to prove the unitary-group analogue of the structural analysis carried out for symmetric groups by the authors in~\cite{debondtthom}. In both the symmetric group and unitary group setting, the full rigidity and automorphism-group results require the use of additional axioms beyond $\mathsf{ZFC}$ and are obtained under Todor\v{c}evi\'c's Open Coloring Axiom $\mathrm{OCA}$ together with the fragment of Martin's axiom $\mathrm{MA}_{\aleph_1}(\sigma\text{-linked})$.
	
	The proof patterns for deriving such rigidity results from these axioms are very similar for both families of groups, and it is instructive to formulate this template here in a general context.
	
	\begin{notation}\label{not.not1}$\;$\\
		Let $\cG = \{ (G_k,\dmet_k) : k \in \bbN \}$ be a sequence of separable bi-invariant metric groups, uniformly bounded in diameter. (We could as well formulate these considerations for more general families of metric structures in the sense of metric model theory, we restrict to the group setting for concreteness without losing generality in the arguments.) \\
		Let $\seqtoinf$ be the set of sequences $(k_n)_n$ of natural numbers satisfying $k_n \to \infty$ as $n\to \infty$.   We write
		\[
		\mathcal G[(k_n)_n]:=\prod_n G_{k_n}/\Fin      \qquad
		\text{for }(k_n)_n \in \seqtoinf,
		\]
		i.e. $\mathcal G[(k_n)_n]$ is the quotient group of $\prod_n G_{k_n}$ with respect to the normal subgroup given by all sequences $(a_n)_n \in \prod_n G_{k_n}$ with $\dmet_{k_n}(a_n,1) \to 0.$ We denote by $[a_n]_n$ the element of $\prod_n G_{k_n}/\mathrm{Fin}$ represented by $(a_n)_n \in \prod_n G_{k_n}$. For $a= [a_n]_n, b =[b_n]_n \in \mathcal G[(k_n)_n]$ and a set $S \subseteq \mathbb{N}$, write 
		\[ a =_S b \text{ for } \lim_{n\in S} \dmet_{k_n}(a_n,b_n) = 0.   \]
	\end{notation}
	
	Now assume that there exists an equivalence relation $\asymp$ on $\seqtoinf$ such that for all $(k_n)_n, (l_n)_n \in \seqtoinf$ with $(k_n)_n \asymp (l_n)_n$ there exists a canonical group isomorphism of product form
	\[
	\Gamma[ (k_n)_n, (l_n)_n ]: \cG[(k_n)_n] \to \cG[(l_n)_n].
	\]
	Here a map $\cG[(k_n)_n] \to \cG[(l_n)_n]$ is said to be \emph{of product form} if it is given by $[a_n]_n \mapsto [h_n(a_n)]_n$ for some sequence of maps $h_n : G_{k_n} \to G_{l_n}$.
	Naturally occurring examples of such equivalence relations $\asymp$ are asymptotic equality (i.e. $k_n/l_n \rightarrow 1$) and eventual identity (i.e. $k_n = l_n$ for all but finitely many $n$.)
	
	Assume in addition that
	\begin{enumerate}
		\item[(CR)] \textbf{$\cG$ recognises coordinates.}
		For every isomorphism
		$
		\varphi:\mathcal G[(k_n)_n]\to \mathcal G[(l_n)_n]
		$
		there exists an automorphism
		$
		\theta \in \Aut(\pow(\mathbb N)/\Fin)
		$
		such that for all $a,b\in \mathcal G[(k_n)_n]$ and all $S\in \pow(\mathbb N)/\Fin$,
		\[
		a=_S b
		\iff
		\varphi(a)=_{\theta(S)}\varphi(b).
		\]
		
		\item[(PF)] $\cG$ has \textbf{Product-form rigidity.}
		Whenever
		$
		\psi:\mathcal G[(k_n)_n]\to \mathcal G[(l_n)_n]
		$
		is an isomorphism of product form,
		then $(k_n)_n \asymp (l_n)_n$ and there exists a sequence
		$
		\alpha_n\in\Aut(G_{l_n})
		$
		such that
		\[
		\psi
		=
		\varphi[(\alpha_n)_n]\circ \Gamma[(k_n)_n,(l_n)_n],
		\]
		where $\varphi[(\alpha_n)_n]: \mathcal G[(l_n)_n]\to \mathcal G[(l_n)_n] $ is the automorphism of product form mapping $[a_n]_n$ to $[\alpha_n(a_n)]_n$.
	\end{enumerate}
	
	In this context an isomorphism
	$
	\Phi:\mathcal G[(k_n)_n]\to \mathcal G[(l_n)_n]
	$
	is called \emph{trivial} if there exist:
	\begin{itemize}
		\item an almost permutation $f:\mathbb N\to\mathbb N$,
		\item and a sequence of automorphisms
		$
		\alpha_n\in \Aut(G_{l_n}),
		$
	\end{itemize}
	such that
	$
	(k_{f(n)})_n \asymp (l_n)_n,
	$
	and
	\[
	\Phi
	=
	\varphi[(\alpha_n)_n] \circ \Gamma[ (k_{f(n)})_n ,(l_n)_n] \circ \psi_f,
	\]
	where
	$
	\psi_f:\mathcal G[(k_n)_n]\to \mathcal G[(k_{f(n)})_n]
	$
	is the reindexing isomorphism
	$
	\psi_f([a_n]_n)=[a_{f(n)}]_n,
	$
	and $\varphi[(\alpha_n)_n]: \mathcal G[(l_n)_n]\to \mathcal G[(l_n)_n] $ is the automorphism of product form mapping $[a_n]_n$ to $[\alpha_n(a_n)]_n$.
	
	The mechanism behind the proof of the main result from \cite{debondtthom}, which we apply here for unitary groups, can then be isolated in the following Lemma~\ref{lem.rigidGeneral}. We note that this precisely motivates the results aimed at in Section~\ref{sec:compact-domain-stability} and Section~\ref{sec:almost surj cont hom} and in the coordinate recognition argument of Section~\ref{sec:reduced-products}, because these furnish the premises required for application of Lemma~\ref{lem.rigidGeneral}. Furthermore, with exception of the proof of Theorem~\ref{thm:center-preserving-tracial-wep}, all applications of forcing axioms in this paper factor through application of Lemma~\ref{lem.rigidGeneral}. Moreover, Section~\ref{sec:compact-domain-stability}, Section~\ref{sec:almost surj cont hom} and the coordinate recognition principles of Section~\ref{sec:reduced-products} and Section~\ref{sec:matrix-reduced-products} require only $\mathsf{ZFC}$-arguments.
	
	\begin{lemma}\label{lem.rigidGeneral}
		Assume
		$
		\mathrm{OCA}+\mathrm{MA}_{\aleph_1}(\sigma\text{-linked}).
		$
		Let $\mathcal G=\{(G_k,\dmet_k):k\in\mathbb N\}$ be a sequence of separable bi-invariant metric groups of uniformly bounded diameter endowed with an equivalence relation $\asymp$ on $\seqtoinf$ and canonical group isomorphisms
		$
		\Gamma[ (k_n)_n, (l_n)_n ]: \cG[(k_n)_n] \to \cG[(l_n)_n]
		$
		for $(k_n)_n \asymp (l_n)_n$ and assume that $\mathcal G$ satisfies \emph{(CR)} and \emph{(PF)} above.
		Then every isomorphism
		$
		\varphi:\mathcal G[(k_n)_n]\to \mathcal G[(l_n)_n]
		$
		is trivial.
	\end{lemma}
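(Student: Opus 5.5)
Given an isomorphism $\varphi:\mathcal G[(k_n)_n]\to\mathcal G[(l_n)_n]$, I start by applying (CR) to get $\theta\in\Aut(\pow(\bbN)/\Fin)$ such that $\varphi$ transports the relations $=_S$ to $=_{\theta(S)}$. Under $\mathrm{OCA}+\mathrm{MA}_{\aleph_1}$, Velickovic's theorem says every automorphism of $\pow(\bbN)/\Fin$ is trivial. So there are cofinite sets $A,B\subseteq\bbN$ and a bijection $g:B\to A$ with $\theta([S])=[g^{-1}(S)]$. Extending $g$ arbitrarily on finitely many points gives an almost permutation $f$ such that $\theta(S)=f^{-1}(S)$ modulo $\Fin$.

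I then pass to $\varphi':=\varphi\circ\psi_f^{-1}:\mathcal G[(k_{f(n)})_n]\to\mathcal G[(l_n)_n]$. Since $\psi_f$ carries $=_S$ to $=_{f^{-1}(S)}$, the associated Boolean automorphism of $\varphi'$ is the identity: for all $a,b$ and all $S$, $a=_S b\iff\varphi'(a)=_S\varphi'(b)$. Write $m_n:=k_{f(n)}$. It remains to show that $\varphi'$ is of product form. Once that is done, (PF) yields $(m_n)_n\asymp(l_n)_n$ and automorphisms $\alpha_n$ with $\varphi'=\varphi[(\alpha_n)_n]\circ\Gamma[(m_n)_n,(l_n)_n]$, so $\varphi=\varphi'\circ\psi_f$ is trivial.

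\textbf{Main step: an identity-on-coordinates isomorphism is of product form.} This is where OCA is genuinely used, following the template of~\cite{debondtthom} and the lifting theorems of Farah and McKenney--Vignati. First, the preservation of $=_S$ for all $S$ shows that $\varphi'$ is ``local''. Indeed, if $a_n=b_n$ on an infinite set $S$ of coordinates, then $\varphi'(a)=_S\varphi'(b)$. In particular, for a partition $\bbN=\bigsqcup_j I_j$ into finite intervals, $\varphi'$ commutes up to $=$ with gluing along infinite unions of intervals. Next I fix countable dense subsets $D_k\subseteq G_k$ and, for each interval partition, consider the ``products of $D$-valued pieces''. Using OCA in the standard way, I colour pairs of candidate lifts according to whether they disagree by more than $\varepsilon$ on some coordinate. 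OCA rules out an uncountable $0$-homogeneous set, and the countable-cover alternative yields a Borel (indeed continuous on a comeager/non-meager ideal of pieces) lift of $\varphi'$ on the product over infinitely many intervals. $\mathrm{MA}_{\aleph_1}(\sigma\text{-linked})$ is then used, as in Farah's proof, to show that the ideal of sets on which such a continuous lift exists is ccc over $\Fin$ and in fact contains all of $\bbN$ modulo $\Fin$. A continuous lift of a map between reduced products preserving all $=_S$ can then be stabilised coordinatewise: by a diagonal/uniformisation argument, for each $n$ the $n$-th output coordinate depends, up to an error $\varepsilon_n\to0$, only on the $n$-th input coordinate. This yields maps $h_n:G_{m_n}\to G_{l_n}$, first on $D_{m_n}$ and then extended, with $\varphi'([a_n]_n)=[h_n(a_n)]_n$. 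Separability and the bounded diameter ensure that the approximation errors can be made summable and uniform.

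I expect this OCA lifting step to be the main obstacle: producing a coordinatewise ($\varepsilon_n\to0$) lift from the mere preservation of the relations $=_S$. My first move would be to adapt verbatim the corresponding argument from~\cite{debondtthom} for symmetric groups, checking that it only used separability, bi-invariance and bounded diameter of the metrics. The reduction via Velickovic's theorem and the final appeal to (PF) are routine.
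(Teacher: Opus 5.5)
Your proposal follows the paper's proof step for step. (CR) gives $\theta$. Triviality of automorphisms of $\pow(\bbN)/\Fin$ under the forcing axioms gives an almost permutation $f$, and reindexing by $f$ yields a coordinate-fixing isomorphism $\mathcal G[(k_{f(n)})_n]\to\mathcal G[(l_n)_n]$. A lifting theorem then makes this map of product form, and (PF) finishes.

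The one difference is the lifting step. The paper does not reconstruct it; it cites the metric lifting theorem of \cite{debondtvignati}, which states exactly that under $\mathrm{OCA}+\mathrm{MA}_{\aleph_1}(\sigma\text{-linked})$ every coordinate-fixing isomorphism between reduced products of separable metric spaces of uniformly bounded diameter is of product form. Your sketch of that step is not a proof as written; in particular, the coordinatewise stabilisation of a continuous lift is only asserted when the $G_k$ are merely separable rather than compact. You should invoke that theorem instead of rederiving it.
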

	
	The proof of this lemma is only notationally different from the proof of Theorem 3.9 in \cite{debondtthom}, we include it for completeness.
	
	\begin{proof}
		Let
		$\varphi:\mathcal G[(k_n)_n]\to \mathcal G[(l_n)_n]$
		be an isomorphism.    By hypothesis (CR), there exists $\theta:=\theta_\varphi\in \Aut(\pow(\mathbb N)/\Fin)$
		such that
		\[a=_S b\iff\varphi(a)=_{\theta(S)}\varphi(b)\]
		for all $a,b\in \mathcal G[(k_n)_n]$ and all $S\in \pow(\mathbb N)/\Fin$.
		Under $\mathrm{OCA}$, every automorphism of $\pow(\mathbb N)/\Fin$ is induced by an almost permutation of $\mathbb N$. Hence there exists an almost permutation
		$f:\mathbb N\to\mathbb N$
		such that $\theta(S)=f^{-1}[S]$,
		for every $S\in\pow(\mathbb N)/\Fin$. 
		Define the reindexing isomorphism
		\[\psi_{f^{-1}}:\mathcal G[(k_{f(n)})_n]\to \mathcal G[(k_n)_n],\qquad\psi_{f^{-1}}([x_n]_n)=[x_{f^{-1}(n)}]_n,\]
		and set
		$\widehat\varphi:=\varphi\circ \psi_{f^{-1}}:\mathcal G[(k_{f(n)})_n]\to \mathcal G[(l_n)_n].$
		Note that $\widehat\varphi$ then fixes all coordinate equivalence relations $=_S$, i.e.
		\[a=_S b\iff\widehat\varphi(a)=_S\widehat\varphi(b)\qquad
		\text{for all }a,b\in\mathcal G[(k_{f(n)})_n],\ S\in\pow(\mathbb N)/\Fin.\]
		
		By hypothesis, the groups $G_k$ are separable metric spaces of uniformly bounded diameter, so the metric lifting theorem from~\cite{debondtvignati} applies: under $\mathrm{OCA}+\mathrm{MA}_{\aleph_1}(\sigma\text{-linked})$ every coordinate-fixing isomorphism between reduced products of such metric spaces is of product form.
		
		So there exist maps
		$h_n:G_{k_{f(n)}}\to G_{l_n}$
		such that
		$\widehat\varphi([(x_n)_n])=[(h_n(x_n))]_n$
		for all $[(x_n)_n]\in \mathcal G[(k_{f(n)})_n].$
		Hypothesis (PF) now applies to $\widehat\varphi$. Therefore
		$(k_{f(n)})_n \asymp \mathbf (l_n)_n$
		and there exists a sequence
		$\alpha_n\in \Aut(G_{l_n})$
		such that
		$\widehat\varphi=\varphi[(\alpha_n)_n]\circ \Gamma[(k_{f(n)})_n,(l_n)_n].$
		
		Finally,
		$\varphi=\widehat\varphi\circ \psi_f=\varphi[(\alpha_n)_n]\circ \Gamma[(k_{f(n)})_n,(l_n)_n]\circ \psi_f.$ Hence $\varphi$ is trivial.
	\end{proof}
	
	Because of the close relation to unitary groups, we consider in Section~\ref{sec:matrix-reduced-products} corresponding questions for reduced products of matrix algebras with respect to the normalised trace norm.  In the algebraic case
	the coordinate recognition argument is simpler, because the center is already the commutative
	$C^*$-algebra $C(\partial\beta\mathbb N)$, and hence the Stone--\v{C}ech
	boundary is visible directly from the ring structure.
	
	The paper is organized as follows.  Section~\ref{sec:compact-domain-stability} proves the
	compact-group stability theorem in normalized Hilbert-Schmidt metric.
	Section~\ref{sec:almost surj cont hom} contains the proof of the classification theorem for almost surjective homomorphisms between finite unitary groups, which passes through certain representation-theoretic estimates, and Section~\ref{sec:reduced-products} studies
	product-form isomorphisms and coordinate recognition for reduced products
	of unitary groups.  Section~\ref{sec:matrix-reduced-products} treats the
	parallel statement for reduced products of matrix algebras.  
	
	\section{Compact-group stability in trace norm}\label{sec:compact-domain-stability}
	
	For $n\in \bbN$ and $x\in \M_{n}(\mathbb C)$, write
	\[
	\|x\|_{2,n}:=\left(\frac{1}{n}\tr(x^*x)\right)^{1/2}
	\]
	for the normalized trace norm. We denote by $\dHSk{n}$ the metric on $\U(n)$ induced by $\|\cdot\|_{2,n}$.
	
	In this section we prove a stability theorem for uniform almost homomorphisms from a compact metrizable group into a finite-dimensional unitary group with respect to the normalized trace norm. The result is the compact-group analogue of a theorem for amenable groups from~\cite{dCOT}, and its proof is a direct adaptation of their argument. Compared with the discrete amenable setting, two additional issues must be addressed. Firstly, an approximate representation need not be measurable. This is handled by constructing a finite-valued measurable approximation under a quasi-Lipschitz hypothesis. Secondly, the argument proceeds by considering a regularized positive definite kernel obtained by Haar averaging, which is a priori only measurable and not necessarily continuous. This point is addressed by replacing the measurable positive definite kernel with its unique continuous representative via a version of Pettis' theorem~\cite{Pettis1950}.

	The theorem may also be viewed as an extension of the finite-group Hilbert--Schmidt stability theorem of Gowers and Hatami~\cite{Gowers_Hatami}. Their paper already suggests that such a generalization should hold, although the details are not worked out there. We found it more convenient to follow the methods of \cite{dCOT}, but we do not claim any originality for the result. A more restrictive version of this generalization appears in \cite{AlekseevThom} in the form of a stability theorem for uniform almost homomorphisms from finite unitary groups to the unitary group of a tracial von Neumann algebra, which is proved by a modification of the same argument appearing in this section. For ease of the reader we still include the full argument.
	
	\begin{definition}
		Let $K$ be a group and let $\varphi\colon K\to \U(n)$ be a map.
		\begin{enumerate}
			\item[(i)] We say that $\varphi$ is an $\varepsilon$-representation if
			\[
			\dHSk{n}(\varphi(gh),\varphi(g)\varphi(h))\le \varepsilon
			\qquad (g,h\in K).
			\]
			\item[(ii)] Given a metric $\dmet_K$ on $K$ and a constant $L\ge 0$, we say that $\varphi$ is $(L,\varepsilon)$-quasi-Lipschitz if
			\[
			\dHSk{n}(\varphi(g),\varphi(h))\le L \dmet_K(g,h)+\varepsilon
			\qquad (g,h\in K).
			\]
		\end{enumerate}
	\end{definition}
	
	Our main result will be proved after a sequence of lemmas. The first one is a measurable approximation result for quasi-Lipschitz $\varepsilon$-representations.
	
	\begin{lemma}\label{app:lem-measurable-approx}
		Let $K$ be a compact metrizable group equipped with a compatible bi-invariant metric $\dmet_K$, and let $\varphi\colon K\to \U(n)$ be an $(L,\varepsilon)$-quasi-Lipschitz $\varepsilon$-representation. Then for every $r>0$ there exists a Borel map $\psi\colon K\to \U(n)$ with finite image such that
		\[
		\sup_{g\in K} \dHSk{n} (\varphi(g),\psi(g))\le Lr+\varepsilon
		\]
		and
		$
		\dHSk{n}(\psi(gh),\psi(g)\psi(h))\le 3Lr+4\varepsilon$
		(for all $g,h\in K$).
	\end{lemma}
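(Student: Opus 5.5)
Since $K$ is compact and $\dmet_K$ is compatible, we first cover $K$ by finitely many open $\dmet_K$-balls of radius $r$, say $B(g_1,r),\dots,B(g_m,r)$. We then refine this cover into a finite Borel partition $P_1,\dots,P_m$ by setting
\[
P_i := B(g_i,r)\setminus \bigcup_{j<i} B(g_j,r)
\]
and discarding empty pieces. Each $P_i$ is Borel and $P_i\subseteq B(g_i,r)$. We define $\psi(g):=\varphi(g_i)$ for $g\in P_i$. This is a Borel map with finite image $\{\varphi(g_1),\dots,\varphi(g_m)\}$.

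For the first estimate, take $g\in P_i$. Then $\dmet_K(g,g_i)<r$, so quasi-Lipschitzness gives
\[
\dHSk{n}(\varphi(g),\psi(g)) = \dHSk{n}(\varphi(g),\varphi(g_i)) \le Lr+\varepsilon .
\]

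For the second estimate, we use the triangle inequality together with bi-invariance of $\dHSk{n}$ on $\U(n)$ (left and right multiplication by unitaries preserve $\|\cdot\|_{2,n}$). Write $\delta:=Lr+\varepsilon$. Then
\[
\dHSk{n}(\psi(gh),\psi(g)\psi(h)) \le \dHSk{n}(\psi(gh),\varphi(gh)) + \dHSk{n}(\varphi(gh),\varphi(g)\varphi(h)) + \dHSk{n}(\varphi(g)\varphi(h),\psi(g)\psi(h)).
\]
The first term is at most $\delta$ and the second at most $\varepsilon$. For the third we insert $\varphi(g)\psi(h)$ and use bi-invariance:
\[
\dHSk{n}(\varphi(g)\varphi(h),\psi(g)\psi(h)) \le \dHSk{n}(\varphi(h),\psi(h)) + \dHSk{n}(\varphi(g),\psi(g)) \le 2\delta .
\]
In total this gives the bound $3\delta+\varepsilon = 3Lr+4\varepsilon$, which is exactly the claimed constant.

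There is no real obstacle here. The only points needing care are the Borel measurability of the partition, which holds because open balls are Borel and finite Boolean combinations of Borel sets are Borel, and the use of compactness to obtain a finite cover, which is what makes the image of $\psi$ finite. Bi-invariance of $\dmet_K$ is not actually needed for this lemma; only compatibility (so that balls are open) and compactness are used.
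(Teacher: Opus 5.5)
Your proposal is correct and follows essentially the same argument as the paper: a finite $r$-net with a subordinate Borel partition, $\psi$ defined as $\varphi$ evaluated at the net points, and the same three-term triangle-inequality estimate using bi-invariance of $\dHSk{n}$. Your observation that only compactness and compatibility of $\dmet_K$ are used, not its bi-invariance, is also accurate for the paper's proof.
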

	
	\begin{proof} Let $r>0$.
		Fix a compatible metric on $K$ and let $\{x_1,\dots,x_N\}\subseteq K$  be a finite $r$-net. Note that then there exists a Borel partition $K=A_1\sqcup\cdots\sqcup A_N$ with $A_i\subseteq B(x_i,r)$. Set
		$
		\psi(g):=\varphi(x_i)$ 
		(for $g\in A_i$).
		Then $\psi$ is Borel and has finite image. If $g\in A_i$, then
		\[
		\dHSk{n}(\varphi(g),\psi(g))=\dHSk{n}(\varphi(g),\varphi(x_i))\le L\dmet_K(g,x_i)+\varepsilon\le Lr+\varepsilon.
		\]
		For the multiplicative defect, use the bi-invariance of $\dHSk{n}$ on $\U(n)$:
		\begin{align*}
			\dHSk{n}(\psi(gh),\psi(g)\psi(h))
			&\le \dHSk{n}(\psi(gh),\varphi(gh))
			+\dHSk{n}(\varphi(gh),\varphi(g)\varphi(h)) \\
			&\qquad +\dHSk{n}(\varphi(g)\varphi(h),\psi(g)\psi(h))\\
			&\le (Lr+\varepsilon)+\varepsilon+\dHSk{n}(\varphi(g),\psi(g))+\dHSk{n}(\varphi(h),\psi(h))\\
			&\le 3Lr+4\varepsilon.
		\end{align*}
	\end{proof}
	
	\begin{definition}
		Let $K$ be a group. A function $\Phi\colon K\to \M_n(\bbC)$ is called positive definite if for every finite set $F\subseteq K$ the block matrix
		\[
		[\Phi(xy^{-1})]_{x,y\in F}\in \M_{|F|}(\M_n(\bbC))
		\]
		is positive semi-definite.
	\end{definition}
	
	When $K$ is compact, measurability on~$K$ is always understood with respect to Haar probability measure. The following proposition is a matrix-valued version of Pettis' theorem~\cite{Pettis1950} for positive definite functions.
	
	\begin{proposition}\label{app:prop-pd-continuous}
		Let $K$ be a compact metrizable group, and let $\Phi\colon K\to \M_n(\bbC)$ be a measurable positive definite function. Then there is a unique continuous positive definite function $\Phi_c\colon K\to \M_n(\bbC)$ such that $\Phi=\Phi_c$ almost everywhere.
	\end{proposition}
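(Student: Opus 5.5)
We reduce to the scalar case and then regularize by convolution. First, positive definiteness of $\Phi$ implies that for every $\xi\in\bbC^n$ the scalar function $\phi_\xi(g):=\langle \Phi(g)\xi,\xi\rangle$ is positive definite. By polarization, every matrix entry $\Phi_{ij}$ is a linear combination of four such functions, so it suffices to handle scalar positive definite functions, provided the continuous representatives are compatible. Uniqueness is immediate: two continuous functions that agree almost everywhere agree everywhere, because Haar measure on a compact group has full support, so every nonempty open set has positive measure. Once continuity is established, positive definiteness of $\Phi_c$ will follow from a regularization argument given below.

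For existence, we use the standard Pettis/Riesz argument. A scalar measurable positive definite function satisfies $|\phi(g)|\le\phi(e)$ almost everywhere. More carefully, we plan to avoid pointwise statements at $e$ and to work instead with the positive definiteness of the kernel $(x,y)\mapsto\phi(xy^{-1})$ integrated against test functions. This shows that $\phi$ is essentially bounded, hence $\phi\in L^2(K)$, and that it defines a positive integral operator. Then we take an approximate identity $u_k=|V_k|^{-1}\mathbf 1_{V_k}$, with $V_k$ symmetric neighbourhoods of $e$ shrinking to $e$, and define $\phi_k:=u_k*\phi*\tilde u_k$, where $\tilde u(g)=\overline{u(g^{-1})}$. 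Each $\phi_k$ is continuous, since convolving an $L^\infty$ function with an $L^1$ function on a compact group is continuous. Each $\phi_k$ is also genuinely positive definite: for finite $F$ and coefficients $c_x$, the sum $\sum c_x\bar c_y\phi_k(xy^{-1})$ equals a double Haar integral of $\phi(s t^{-1})$ against $f(s)=\sum_x c_x u_k(x^{-1}s)$ and its conjugate, and this integral is nonnegative by the measurable positive definiteness, via approximation of $f$ by simple functions and Fubini. Moreover, $\phi_k\to\phi$ in $L^2$ by the approximate identity property.

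The key step, and the main obstacle, is to show that $(\phi_k)$ is uniformly Cauchy, so that the limit $\phi_c$ is continuous and equals $\phi$ almost everywhere. The first approach to try is GNS: write $\phi(g)=\langle\pi(g)\eta,\eta\rangle$ for a strongly continuous unitary representation $\pi$ and some vector $\eta$. Concretely, we take $\pi$ to be the right regular representation on $L^2(K)$ with inner product twisted by $\phi$, i.e.\ the Hilbert space completion of $L^1(K)$ under $\langle f,h\rangle_\phi=\iint f(s)\overline{h(t)}\phi(t^{-1}s)\,ds\,dt$. Continuity of translations in $L^1$ makes $\pi$ strongly continuous. With $\eta_k$ the class of $u_k$, we get $\phi_k(g)=\langle\pi(g)\eta_k,\eta_k\rangle$. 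We need $\eta_k$ to converge in this Hilbert space, which follows from $\langle\eta_k,\eta_j\rangle_\phi=(\tilde u_j*\phi*u_k)(e)$ together with an equicontinuity or $L^\infty$ estimate. This is the delicate point. A cleaner fallback is to invoke Pettis' theorem~\cite{Pettis1950} for scalars directly, which is the paper's stated source, and to use the matrix argument only for gluing.

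For the gluing, apply the scalar result to the finitely many functions $\phi_{\xi}$ with $\xi\in\{e_i,\ e_i+e_j,\ e_i+\sqrt{-1}e_j\}$, and define $\Phi_c$ entrywise through the continuous representatives. Then $\Phi_c=\Phi$ almost everywhere. Positive definiteness of $\Phi_c$ comes from continuity together with the fact that $\Phi$ is positive definite as a block kernel: the continuous matrix-valued $\Phi_k=u_k*\Phi*\tilde u_k$ is positive definite by the same Fubini computation, and $\Phi_k\to\Phi_c$ pointwise by continuity of $\Phi_c$. Positive semidefiniteness of each finite block matrix passes to the pointwise limit.
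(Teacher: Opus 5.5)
Once you take your own fallback and cite the scalar theorem directly (the paper uses Dixmier, \S13.4: a measurable positive definite function agrees a.e.\ with a continuous positive definite one), your proof is essentially the paper's. That means continuous representatives of the coefficients $\langle\Phi(\cdot)\xi,\xi\rangle$, polarization to assemble $\Phi_c$, and uniqueness from the full support of Haar measure. Your regularization $\Phi_k=u_k*\Phi*\tilde u_k\to\Phi_c$ is actually a more careful proof of block positivity than the paper's one-line ``by continuity'', which does not by itself handle the diagonal blocks $\Phi(e)$, where $\Phi$ and $\Phi_c$ may differ. The GNS detour can be dropped, or closed without any norm convergence of $\eta_k$: take a weak cluster point $\xi$ of the bounded sequence $(\eta_k)$ and put $\phi_c(g)=\langle\pi(g)\xi,\xi\rangle_\phi$.
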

	
	\begin{proof}
		For $\xi,\eta\in\bbC^n$, the scalar coefficient
		$
		g\mapsto \langle \Phi(g)\xi,\eta\rangle
		$
		is measurable, and for $\xi=\eta$ it is positive definite as well. By Dixmier's classical theorem for scalar-valued measurable positive definite functions on locally compact groups, each diagonal coefficient has a unique continuous positive definite representative; see \cite{Dixmier}*{\S13.4}. Polarization then provides continuous representatives for all matrix coefficients. These coefficients assemble into a continuous map $\Phi_c\colon K\to \M_n(\bbC)$ satisfying $\Phi=\Phi_c$ almost everywhere. By continuity, positivity of the finite block matrices passes to $\Phi_c$, and uniqueness is immediate because two continuous functions on $K$ that agree almost everywhere must agree everywhere.
	\end{proof}
	
	The main theorem of this section is the following.
	
	\begin{theorem}\label{app:thm-dCOT}
		For every $L>0$ and $\delta>0$, there exists $\varepsilon>0$ such that the following holds. Let $K$ be a compact metrizable group, and let $\varphi\colon K\to \U(n)$ be an $(L,\varepsilon)$-quasi-Lipschitz $\varepsilon$-representation. Then there exist $m \in [n,(1+\delta)n] \cap \bbN$, a continuous group homomorphism $\rho \colon K \to \U(m)$, and an isometry $V \colon \bbC^n \to \bbC^m$ such that
		\[
		\|\varphi(g)-V^*\rho(g)V\|_{2,n}<\delta.
		\]
	\end{theorem}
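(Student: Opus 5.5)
We will adapt the argument of \cite{dCOT}, replacing the invariant mean on a discrete amenable group by Haar probability measure $\mu$ on $K$. First apply Lemma~\ref{app:lem-measurable-approx} with $r=\varepsilon/L$. This replaces $\varphi$ by a Borel map $\psi\colon K\to\U(n)$ with finite image, uniformly within $2\varepsilon$ of $\varphi$, and with multiplicative defect at most $7\varepsilon$. Since $\varphi$ and $\psi$ are uniformly close, it suffices to prove the conclusion for $\psi$ with $\delta/2$ in place of $\delta$. We may then integrate $\psi$ against $\mu$ freely.

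The core step is the Gowers--Hatami/dCOT construction. On the Hilbert space $L^2(K,\mu;\bbC^n)$, with $\bbC^n$ carrying the normalized inner product, consider the left-regular-type unitary representation
\[
(\lambda(g)f)(x)=f(g^{-1}x),
\]
which is continuous. Define the vector $\xi\in L^2(K)\otimes \M_n(\bbC)$ by $\xi(x)=\psi(x)^*$. We then look at the matrix-valued function
\[
\Phi(g)=\int_K \psi(x)\psi(g^{-1}x)^*\,d\mu(x),
\]
or equivalently the map $T\colon \bbC^n\to L^2(K;\bbC^n)$ given by $(Tv)(x)=\psi(x)^*v$. The map $T$ is an isometry, and $T^*\lambda(g)T=\int\psi(x)\psi(g^{-1}x)^*d\mu(x)$. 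The almost multiplicativity gives $\psi(x)\psi(g^{-1}x)^*\approx \psi(g)$ in $\|\cdot\|_{2,n}$ uniformly, so
\[
\|\psi(g)-T^*\lambda(g)T\|_{2,n}\le 7\varepsilon
\]
for every $g$. Here we use that the $\|\cdot\|_{2,n}$-norm of an average is at most the average of the norms. This yields a compression of a genuine continuous representation, but the ambient space is infinite dimensional. The function $g\mapsto T^*\lambda(g)T$ is positive definite and, being a matrix coefficient of a strongly continuous representation, already continuous, so Proposition~\ref{app:prop-pd-continuous} is only needed if we phrase things through the kernel $\Phi$ directly. In that phrasing it serves as a safety net ensuring the GNS representation is continuous.

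The remaining task, which is the main obstacle, is the dimension reduction to $m\le(1+\delta)n$. Following \cite{dCOT}, we decompose $\lambda=\bigoplus_\pi \pi^{\oplus d_\pi}$ into irreducibles via Peter--Weyl. We then use the key estimate that $P=TT^*$ is a projection of normalized trace one relative to $n$ which almost commutes with $\lambda(g)$ on average. Concretely,
\[
\int_K \|\lambda(g)P-P\lambda(g)\|_{2}^2\,d\mu(g)=O(\varepsilon)
\]
in the normalization of Hilbert--Schmidt norm divided by $n$. This follows from $\|\psi(g)-T^*\lambda(g)T\|_{2,n}$ being small together with $\psi(g)$ being unitary, which forces $(1-P)\lambda(g)P$ to be small in HS-norm.

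Averaging $\lambda(g)P\lambda(g)^*$ over $g$ produces a positive operator $Q$ in the commutant, with $\tr Q=n$ and $\|Q-P\|_2^2\le O(\varepsilon)n$. Next take the spectral projection $E=\chi_{[1/2,\infty)}(Q)$, which lies in the commutant, so that $\lambda$ restricts to a finite-dimensional subrepresentation $\rho$ on $E$. A Chebyshev/Markov-type estimate gives
\[
\dim E\le n+O(\sqrt\varepsilon)n \quad\text{and}\quad \|P-PEP\|_1\le O(\sqrt{\varepsilon})n.
\]
If $\dim E<n$, we enlarge $E$ by adding any invariant subspace, or simply tensor with trivial summands, to reach $m\ge n$. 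Finally we correct $T$ to an isometry $V$ into $E$ via polar decomposition of $ET$, at cost $O(\varepsilon^{1/4})$ in $\|\cdot\|_{2,n}$. We then choose $\varepsilon$ so that all these errors stay below $\delta$. The delicate points are making the almost-commutation estimate uniform in $n$, and the spectral cutoff bounding $\dim E$. This is exactly where the argument of \cite{dCOT} is needed; I expect it to transfer verbatim with sums replaced by Haar integrals.
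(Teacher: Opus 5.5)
Your argument is correct. It follows the same overall strategy as the paper (finite-valued measurable approximation, Haar averaging, dilation, cut-down), but it is more self-contained at the two points where the paper leans on external results.

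The paper forms the Haar-averaged kernel and invokes Proposition~\ref{app:prop-pd-continuous} (Pettis/Dixmier) to get a continuous representative. It then quotes the GNS/Stinespring dilation and finite-dimensional cut-down of \cite{dCOT} verbatim. You differ in two ways:
\begin{itemize}
\item You realize the kernel as $T^*\lambda(g)T$ for the left regular representation on $L^2(K,\mu;\bbC^n)$. Its entries are matrix coefficients $\langle\lambda(g)Te_j,Te_i\rangle$ of a strongly continuous representation, so it is automatically continuous and the Pettis step is genuinely superfluous.
\item You do the cut-down by hand: Haar-average the rank-$n$ projection $P=TT^*$ into the commutant and cut the result $Q$ at $1/2$. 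This is elementary because Haar measure is $\sigma$-additive, so $Q$ is a trace-class element of the commutant with $\tr Q=n$. With only an invariant mean, as in \cite{dCOT}, such an average exists only as a weak$^*$ limit.
\end{itemize}
The paper's route buys brevity and direct reuse of \cite{dCOT}. Yours gives explicit constants, and your choice $r=\varepsilon/L$ shows that $\varepsilon$ can be taken to depend only on $\delta$.

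The steps you flag as delicate need no appeal to \cite{dCOT}. Write $\|\cdot\|_2$ for the unnormalized Hilbert--Schmidt norm.
\begin{itemize}
\item From $\|\psi(g)-T^*\lambda(g)T\|_{2,n}\le 7\varepsilon$ you get $\|T^*\lambda(g)T\|_2\ge(1-7\varepsilon)\sqrt n$, hence $\tr(PQ)=\int_K\|T^*\lambda(g)T\|_2^2\,d\mu(g)\ge(1-7\varepsilon)^2n$.
\item Since $0\le Q\le 1$ and $1-Q\le 2Q(1-Q)$ on the range of $E=\chi_{[1/2,\infty)}(Q)$, and $\|Q\|_2^2\ge \tr(PQ)^2/n$,
\[
\dim E-n\le \tr\bigl(E(1-Q)\bigr)\le 2\tr(Q-Q^2)\le 2\Bigl(n-\frac{\tr(PQ)^2}{n}\Bigr)=O(\varepsilon)\,n .
\]
\item From $1-E\le 2(1-Q)$ you get $\|(1-E)T\|_2^2=\tr(P(1-E))\le 2(n-\tr(PQ))=O(\varepsilon)\,n$.
\end{itemize}
Consequently $m\le(1+O(\varepsilon))n$, and the polar correction of $ET$ to an isometry costs only $O(\sqrt\varepsilon)$ in $\|\cdot\|_{2,n}$, better than your $O(\varepsilon^{1/4})$. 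When $\dim E<n$ or $ET$ is not injective, pad with trivial summands and extend the partial isometry, as you indicate. The Peter--Weyl decomposition you mention is never actually used.
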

	
	\begin{proof}
		Fix $L>0$ and $\delta>0$. After inserting two preliminary reductions that
		are specific to compact groups, the argument is the same as in
		\cite{dCOT}*{Theorem~5.2 and the proof of Theorem~1.6}. First apply
		Lemma~\ref{app:lem-measurable-approx} with a small parameter $r>0$. This
		yields a Borel map $\psi\colon K\to \U(n)$ with finite image such that
		\[
		\sup_{g\in K}\dHSk{n}(\varphi(g),\psi(g))\le Lr+\varepsilon
		\]
		and
		$
		\dHSk{n}(\psi(gh),\psi(g)\psi(h))\le 3Lr+4\varepsilon$
		(for $g,h\in K$).
		
		Thus, after choosing $r$ and then $\varepsilon$ sufficiently small in
		terms of $L$ and $\delta$, we may replace $\varphi$ by the measurable
		approximate representation $\psi$, while retaining the same type of
		small-defect estimates as in \cite{dCOT}. Since the approximation is
		uniform, any final estimate for $\psi$ carries over to $\varphi$ by the
		triangle inequality.
		
		Next, as in \cite{dCOT}, one averages $\psi$ over the group to obtain a
		matrix-valued positive definite kernel. In the discrete amenable setting,
		this kernel is available pointwise from the start, whereas here Haar
		averaging produces only a measurable positive definite function.
		Proposition~\ref{app:prop-pd-continuous} then allows us to replace it by
		its unique continuous positive definite representative. Because the two
		functions agree almost everywhere, every inequality established in
		\cite{dCOT} for the averaged kernel remains valid for the continuous
		representative. After this replacement, the rest of the proof of
		\cite{dCOT}*{Theorem~5.2} applies verbatim: one performs the
		GNS/Stinespring construction for the continuous positive definite kernel
		and obtains an integer $m\in[n,(1+\delta)n]\cap\bbN$, a continuous
		homomorphism $\rho\colon K\to \U(m)$, and an isometry
		$V\colon \bbC^n\to \bbC^m$ such that $\psi(g)$ is uniformly close to
		$V^*\rho(g)V$.
		
		Finally, the passage from $\psi$ back to $\varphi$ is exactly the same as
		in the proof of \cite{dCOT}*{Theorem~1.6}. Combining the estimate from
		the previous paragraph with
		\[\sup_g\dHSk{n}(\varphi(g),\psi(g))\le Lr+\varepsilon\] and choosing the
		initial parameters sufficiently small yields
		$
		\|\varphi(g)-V^*\rho(g)V\|_{2,n}<\delta$
		(for $g\in K$).
		This proves the theorem.
	\end{proof}
	
	\begin{corollary}\label{app:cor-dCOT-sequential}
		Let $(K_n,\dmet_n)_n$ be a sequence of compact metrizable groups with compatible bi-invariant metrics. Let $N_n\to\infty$, and let
		$
		\varphi_n\colon K_n\to \U(N_n)
		$
		be maps. Assume that there are numbers $\varepsilon_n\to0$ and $L_n\ge0$ such that $\varphi_n$ is an $\varepsilon_n$-representation and is $(L_n,\varepsilon_n)$-quasi-Lipschitz. 
		Then, after discarding finitely many indices, there exist integers
		\(m_n\ge N_n\), continuous homomorphisms \(\rho_n:K_n\to U(m_n)\), and
		isometries \(V_n:\mathbb C^{N_n}\to\mathbb C^{m_n}\) such that
		$m_n/N_n\to 1$
		and
		\[
		\sup_{g\in K_n}
		\|\varphi_n(g)-V_n^*\rho_n(g)V_n\|_{2,N_n}\to 0.
		\]
	\end{corollary}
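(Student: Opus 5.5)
The plan is to apply Theorem~\ref{app:thm-dCOT} once for each index $n$, with a $\delta$ that shrinks slowly enough to stay compatible with $\varepsilon_n$ (a diagonal argument). There is one obstruction first. Theorem~\ref{app:thm-dCOT} gives $\varepsilon$ depending on a fixed Lipschitz constant $L$, while the constants $L_n$ in the corollary may be unbounded. I would remove this by rescaling the metrics. Define $\dmet_n' := \max(L_n,1)\,\dmet_n$. This is again a compatible bi-invariant metric on $K_n$, and since $L_n\,\dmet_n\le \dmet_n'$,
\[
\dHSk{N_n}(\varphi_n(g),\varphi_n(h))\le L_n\dmet_n(g,h)+\varepsilon_n\le \dmet_n'(g,h)+\varepsilon_n .
\]
So each $\varphi_n$ is an $\varepsilon_n$-representation that is $(1,\varepsilon_n)$-quasi-Lipschitz with respect to $\dmet_n'$. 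Theorem~\ref{app:thm-dCOT} quantifies over all compact metrizable groups with any compatible bi-invariant metric, and its $\varepsilon$ depends only on $(L,\delta)$. Hence after rescaling we may apply it uniformly in $n$ with $L=1$.

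Next comes the diagonal choice. For each $k\ge1$, let $\eta_k>0$ be the $\varepsilon$ that Theorem~\ref{app:thm-dCOT} provides for $L=1$ and $\delta=1/k$. Replacing $\eta_k$ by $\min(\eta_1,\dots,\eta_k)$, we may assume $(\eta_k)_k$ is nonincreasing; this is harmless because a smaller $\varepsilon$ still works in the theorem. Since $\varepsilon_n\to0$, there is $n_0$ with $\varepsilon_n\le\eta_1$ for all $n\ge n_0$, and we discard the finitely many indices $n<n_0$. For $n\ge n_0$ set
\[
k(n):=\max\{k\le n:\ \varepsilon_n\le \eta_k\}.
\]
This is well defined, and $k(n)\to\infty$. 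Indeed, for a fixed $k$, once $n\ge k$ and $\varepsilon_n\le\eta_k$ we get $k(n)\ge k$.

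For each $n\ge n_0$ we then apply Theorem~\ref{app:thm-dCOT} to $\varphi_n$ with $L=1$ and $\delta=1/k(n)$; this is legitimate because $\varepsilon_n\le\eta_{k(n)}$. It yields $m_n\in[N_n,(1+1/k(n))N_n]$, a continuous homomorphism $\rho_n\colon K_n\to\U(m_n)$, and an isometry $V_n\colon\bbC^{N_n}\to\bbC^{m_n}$ with $\|\varphi_n(g)-V_n^*\rho_n(g)V_n\|_{2,N_n}<1/k(n)$ for all $g\in K_n$. Continuity of $\rho_n$ refers to the topology of $K_n$, which the rescaling does not change. Consequently $1\le m_n/N_n\le 1+1/k(n)\to1$ and the supremum of the errors is at most $1/k(n)\to0$.

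I expect the only real obstacle to be noticing the uniformity issue in $L_n$; the rescaling trick resolves it. The diagonalization is routine once $\eta_k$ is made monotone.
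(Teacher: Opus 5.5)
Your proof is correct, but it deals with the unbounded constants $L_n$ differently from the paper. The paper does not apply Theorem~\ref{app:thm-dCOT} as a black box. It chooses net radii $r_n\to0$ with $L_nr_n\to0$, and uses Lemma~\ref{app:lem-measurable-approx} to replace $\varphi_n$ by finite-image Borel maps $\psi_n$. These are uniformly $o(1)$-close to $\varphi_n$ and have multiplicative defect $o(1)$. It then re-runs the proof of Theorem~\ref{app:thm-dCOT} on the $\psi_n$, a part of the argument that no longer involves $L$, with $\delta=1/k$ and an unspecified diagonal argument, and finishes with the triangle inequality.

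You instead observe that $\max(L_n,1)\,\dmet_n$ is again a compatible bi-invariant metric. This normalizes the Lipschitz constant to $1$, so the theorem's statement applies verbatim and uniformly in $n$, and you make the diagonalization explicit through $k(n)$.

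Your route uses only the statement of the theorem. Since that statement quantifies over all compatible bi-invariant metrics, it also shows that the $\varepsilon$ there can be chosen independently of $L$. The paper's route instead makes visible that the quasi-Lipschitz hypothesis serves only to produce measurable approximations, and that the real content is stability for measurable approximate representations. Both arguments exploit the same fact: the scale of the metric on $K_n$ is irrelevant. The paper expresses this through the choice of $r_n$, and you express it through rescaling.
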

	
	\begin{proof}
		Choose $r_n>0$ such that $r_n\to0$ and $L_nr_n\to0$. By Lemma~\ref{app:lem-measurable-approx}, there are Borel maps
		$
		\psi_n\colon K_n\to \U(N_n)
		$
		with finite image such that
		\[
		\sup_{g\in K_n}\dHSk{N_n}(\varphi_n(g),\psi_n(g))\to0
		\]
		and
		\[
		\sup_{g,h\in K_n}
		\dHSk{N_n}(\psi_n(gh),\psi_n(g)\psi_n(h))\to0.
		\]
		Applying the proof of Theorem~\ref{app:thm-dCOT} successively with \(\delta=1/k\) and using a
		diagonal argument, we obtain, after discarding finitely many indices,
		integers \(m_n\ge N_n\), continuous homomorphisms
		\(\rho_n:K_n\to U(m_n)\), and isometries
		\(V_n:\mathbb C^{N_n}\to\mathbb C^{m_n}\) such that
		$m_n/N_n\to 1$ and
		\[
		\sup_{g\in K_n}
		\|\psi_n(g)-V_n^*\rho_n(g)V_n\|_{2,N_n}\to0.
		\]
		The conclusion follows from the triangle inequality.
	\end{proof}
	
	\section{Almost surjective continuous homomorphisms}\label{sec:almost surj cont hom}

	The goal of this section is to prove the following structure result on almost surjective unitary representations. The result seems plausible and maybe even obvious to experts, however, we could not find a direct and elementary argument.
	
	\begin{definition}
		A map $f$ from a metric space $(M,d)$ to a metric space $(N,\partial)$ is $\delta$-surjective if for every $y\in N$ there exists $x\in M$ such that $\partial(f(x),y)\le \delta$.
	\end{definition}
	
	\begin{theorem}  \label{th_stab}
		For every $\varepsilon>0$ there exists $\delta>0$ such that the following holds.
		If $\varphi: \U(n) \to \U(m)$ is a continuous homomorphism which is $\delta$-surjective w.r.t.\ the normalized trace norm, then one of the following holds:
		\begin{enumerate}
			\item[(i)] $m=1$, in which case $\varphi(v)=\det(v)^k$ for some $k\in \bbZ$.
			\item[(ii)] $n\le m\le (1+\varepsilon)n$, and there exist an integer $k\in \bbZ$ and $u\in \U(m)$ such that either
			\[
			\forall v\in \U(n)\qquad
			\dHSk{m}\bigl(\varphi(v),\det(v)^k\,\ad_u(v\oplus I_{m-n})\bigr)\le \varepsilon,
			\]
			or
			\[
			\forall v\in \U(n)\qquad
			\dHSk{m}\bigl(\varphi(v),\det(v)^k\,\ad_u(\overline{v}\oplus I_{m-n})\bigr)\le \varepsilon.
			\]
		\end{enumerate}
		In particular, in case \textup{(ii)}, if one imposes in addition the normalization
		$
		\varphi(zI_n)=zI_m
		$
		for all $z\in \mathbb{T}$, then $k=0$, unless $(n,k)=(2,1)$.
	\end{theorem}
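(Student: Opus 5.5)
The plan is to pass from $\delta$-density of the image to a structural statement about the decomposition of $\varphi$ into irreducibles, and then to identify the dominant irreducible via highest weights. Case (i) is immediate: if $m=1$, then $\varphi$ is a continuous character of $\U(n)$. It therefore factors through $\U(n)/\SU(n)\cong\mathbb T$ and equals $\det^k$. Assume from now on that $m\ge 2$ and write
\[
\bbC^m=\bigoplus_i V_i\otimes\bbC^{\mu_i}
\]
with pairwise inequivalent irreducibles $\pi_i$ of dimension $d_i$ and multiplicities $\mu_i$. The image of $\varphi$ then lies in the unital $*$-subalgebra $A=\bigoplus_i \M_{d_i}\otimes 1_{\mu_i}$.

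\textbf{Step 1 (one dominant component).} Let $E$ be the $\tau_m$-preserving conditional expectation onto $A$. For $a\in \U(m)\cap A$ and any $w\in\U(m)$,
\[
\|w-a\|_{2,m}^2=2-2\operatorname{Re}\tau_m(a^*E(w))\ge 2-2\|E(w)\|_{2,m}.
\]
For Haar-random $w$ a direct computation gives $\mathbb E\|E(w)\|_{2,m}^2=\dim A/m^2=\sum_i d_i^2/m^2$. Since some $w$ must be $\delta$-close to the image, we get
\[
\sum_i d_i^2\ge (1-O(\delta))^2\, m^2 .
\]
Together with $\sum_i d_i\mu_i=m$, this forces a single component $\pi:=\pi_1$ with $\mu_1=1$ and $d:=d_1\ge(1-O(\delta))m$. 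The remaining summands occupy a subspace of normalized dimension $O(\delta)$. Any unitary on it differs from $\det(v)^k I$ by at most $O(\sqrt\delta)$ in $\dHSk{m}$, so these summands can be ignored in the final estimate. Moreover, $\pi(\U(n))$ is $O(\sqrt\delta)$-dense in $\U(d)$, because compressing to the block $V_1$ changes normalized distances only by $O(\sqrt\delta)$.

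\textbf{Step 2 (identifying $\pi$; the main obstacle).} We need that an irreducible $\pi$ of $\U(n)$ whose image is $\eta$-dense in $\U(d)$, with $\eta$ small, is $\mathrm{std}\otimes\det^k$ or $\overline{\mathrm{std}}\otimes\det^k$, or else $d=1$. The plan is to run the same averaging argument in $\M_d\otimes\M_d$. Let $T$ lie in the commutant of $\pi\otimes\pi$. Density gives that $T$ almost commutes, in $\|\cdot\|_{2,d^2}$, with $w\otimes w$ for all $w\in\U(d)$. Averaging over $\U(d)$ then shows that $T$ is close to $\operatorname{span}\{1,F\}$, where $F$ is the flip. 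Applying this to the projections onto the isotypic pieces of $\pi\otimes\pi$ shows that $\pi\otimes\pi$ has, up to an $O(\eta)$ fraction of dimension, only two blocks, of normalized dimensions about $\tfrac12\pm\tfrac1{2d}$, matching $\mathrm{Sym}^2$ and $\Lambda^2$. Writing $\pi$ with highest weight $\lambda=\lambda_0+k(1,\dots,1)$, the Cartan component $V_{2\lambda}$ and the neighbouring components $V_{2\lambda-\alpha}$ must then carry essentially all of the dimension. Weyl's dimension formula should show this fails, with a definite loss, unless $\lambda_0\in\{0,e_1,-e_n\}$.

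As a fallback for large $\lambda$, I would use a covering-number comparison. The image is a compact Lie group of dimension at most $n^2$, while an $\eta$-net of $\U(d)$ in normalized Hilbert--Schmidt norm needs $(c/\eta)^{d^2}$ points. This rules out $d\gg n$, provided one controls the Lipschitz constant of $\pi$ polynomially in $|\lambda|$. I expect Step 2 to be the hardest part; the delicate region is small $n$ combined with moderate $\lambda$, where the dimension estimates must be done by hand.

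\textbf{Step 3 (conclusion).} Since $\pi\in\{\mathrm{std}\otimes\det^k,\ \overline{\mathrm{std}}\otimes\det^k\}$ with $\mu_1=1$, we have $d=n$. Hence $n\le m\le n/(1-O(\delta))\le(1+\varepsilon)n$. Choosing $u$ to conjugate $V_1$ onto the first $n$ coordinates gives (ii) with error $O(\sqrt\delta)\le\varepsilon$ for $\delta$ small.

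For the normalization, evaluate at $zI_n$. The component $V_1$ alone occupies all but an $O(\delta)$ fraction, so $\varphi(zI)=zI_m$ forces $\pi(zI_n)=zI_n$. In the standard case this reads $z^{1+nk}=z$, so $k=0$. In the conjugate case it reads $z^{-1+nk}=z$, so $nk=2$, giving $(n,k)=(2,1)$; the case $n=1$ is excluded because then $m=1$.
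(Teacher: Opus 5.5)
Steps 1 and 3 are fine. Step 1 takes a different route from the paper's Lemma~\ref{app:lem-irreducible-reduction}, and it is a clean one: pick $w$ with $\|E(w)\|_{2,m}^2\le\sum_i d_i^2/m^2$, and $\delta$-surjectivity gives $\sum_i d_i^2\ge(1-\delta^2/2)^2m^2$. Hence there is one summand of multiplicity one and codimension at most $\delta^2m$. The normalization computation is also correct.

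The gap is Step 2, which carries the whole content of the theorem and is only sketched. Your tensor-square reduction is sound as far as it goes: it shows that $\mathrm{Sym}^2\pi$ and $\Lambda^2\pi$ each have an irreducible constituent carrying all but an $O(\eta^2)$-fraction of their dimension. The concluding assertion is neither proved nor routine. That assertion is that Weyl's formula excludes this ``with a definite loss'' for every $\lambda_0\notin\{0,e_1,-e_n\}$, uniformly in $n$ and $\lambda$. To prove it you would have to control all constituents, not only $V_{2\lambda}$ and the $V_{2\lambda-\alpha}$. When $V_{2\lambda}$ is a negligible fraction of $\mathrm{Sym}^2V_\lambda$ (e.g.\ $n$ fixed and $\lambda$ large), you must still show that no other constituent dominates. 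Moreover, one of the two squares can be exactly irreducible, as for $\Lambda^2(\Lambda^2\bbC^n)$ and $\Lambda^2(\mathrm{Sym}^2\bbC^n)$. So the loss has to be located in the correct square case by case, and you give no mechanism for doing so uniformly.

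Your covering-number fallback does not close the argument as stated either. The Lipschitz control you leave as a proviso is exactly what the paper supplies. The highest-root $\SU(2)$ contains an irreducible constituent of dimension $\lambda_1-\lambda_n+1$, so $\lambda_1-\lambda_n\le d-1$ and the scaling factor of $\bar\pi$ is at most $d^2$. This is polynomial in the dimension itself, not in $|\lambda|$. Even so, Bishop--Gromov on both sides only yields $d^2\le An^2\log(2d)$, which leaves the range $n<d\lesssim n\sqrt{\log n}$ untouched. The missing ingredient is the classical gap theorem: the only nontrivial irreducible representations of $\SU(n)$ of dimension $<\binom n2$ are the standard and the dual one. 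For $n\ge N$ the coarse bound puts $d$ below $\binom n2$, and this classification finishes the proof. For $n<N$ no hand estimates are needed. The coarse bound gives $d\le M$, so only finitely many projective types occur. Each type other than standard or dual has image a proper closed subgroup of $\PU(d)$, hence a positive covering defect $\Delta_\tau$, and one takes $\delta$ below the minimum of these defects.
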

	
	For $n\ge 1$, let $\mathbb{T}\subset \U(n)$ denote the scalar subgroup, and set
	$
	\PU(n):=\U(n)/\mathbb{T}.
	$
	We equip $\PU(n)$ with the quotient metric
	$
	\bar \dmet_n(\bar u,\bar v):=
	\inf_{\omega\in\mathbb{T}} \dHSk{n}(\omega u,v).
	$
	The quotient map $q_n:\U(n)\to \PU(n)$ is $1$-Lipschitz. Let $\delta_n$ denote the quotient geodesic distance on $\PU(n)$ induced by the bi-invariant Riemannian metric on $\U(n)$ associated with $\dHSk{n}$.
	
	We shall also use the standard parametrization of irreducible representations of $\U(n)$ by highest weights; see, for example, \cite{FH} and \cite{Hall}. A highest weight is an $n$-tuple
	$
	\lambda=(\lambda_1,\dots,\lambda_n)\in \bbZ^n,$
	with
	$\lambda_1\ge \lambda_2\ge \cdots \ge \lambda_n.
	$
	Let $V_\lambda$ be the corresponding irreducible representation. Its dimension is given by Weyl's formula
	\begin{equation}\label{app:eq:weyl-dim}
		\dim V_\lambda
		=
		\prod_{1\le i<j\le n}
		\frac{\lambda_i-\lambda_j+j-i}{j-i}.
	\end{equation}
	If $k=\lambda_n$ and
	$
	\mu_i:=\lambda_i-\lambda_n$ for $1\le i\le n$,
	then $\mu_n=0$, $\mu_1\ge \cdots \ge \mu_n=0$, and
	$
	V_\lambda\cong (\det)^k\otimes V_\mu.
	$
	Thus the projective equivalence class depends only on~$\mu$, not on $k$.
	
	\begin{corollary}\label{app:cor-projective-gap}
		Let $\varphi:\U(n)\to \U(m)$ be an irreducible representation. Assume $m<\binom{n}{2}$. Then one of the following holds:
		\begin{enumerate}
			\item[(i)] $m=1$, in which case $\varphi(v)=\det(v)^k$ for some $k\in \bbZ$.
			\item[(ii)] $m=n$, and up to multiplication by a determinant character, $\varphi$ is either the standard representation or the dual representation. More precisely, there exist $k\in \bbZ$ and $u\in \U(n)$ such that either
			\[
			\varphi(v)=\det(v)^k\,u v u^{-1}
			\quad\mbox{or}\quad
			\varphi(v)=\det(v)^k\,u\overline{v}\,u^{-1}
			\qquad (v\in \U(n)).
			\]
		\end{enumerate}
	\end{corollary}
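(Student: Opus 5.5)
We write $\varphi\cong V_\lambda$ with $\lambda_1\ge\cdots\ge\lambda_n$ and normalize as in the discussion preceding the statement. Put $k=\lambda_n$ and $\mu=\lambda-k(1,\dots,1)$, so that $\varphi\cong(\det)^k\otimes V_\mu$, $\mu_n=0$, and $\dim V_\mu=m$. It then suffices to show that Weyl's formula \eqref{app:eq:weyl-dim} forces $\dim V_\mu\ge\binom n2$ unless $V_\mu$ is trivial, standard, or dual-type. The dual-type case is handled by the symmetry $\mu\mapsto\mu^*=(\mu_1-\mu_n,\mu_1-\mu_{n-1},\dots,0)$, which corresponds to $V_\mu^*\otimes\det^{\mu_1}$ and preserves dimension. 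The exceptional $\mu$ we expect are $0$, $(1,0,\dots,0)$ and $(1,\dots,1,0)$. The last gives $V_\mu\cong\det\otimes\overline{\mathrm{std}}$, of dimension $n$.

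The key step is a monotonicity estimate for the factors of Weyl's formula. Each factor $\frac{\mu_i-\mu_j+j-i}{j-i}$ is at least $1$, and it is nondecreasing in each gap $\mu_i-\mu_j$. Increasing any $\mu_i$ while keeping the sequence decreasing therefore cannot decrease the dimension. Now suppose $\mu\ne0$. Let $p$ be the number of indices with $\mu_i\ge1$, so $1\le p\le n-1$. Lowering all entries to $0/1$ gives $\dim V_\mu\ge\dim V_{(1^p,0^{n-p})}=\binom np$. For $2\le p\le n-2$ this is at least $\binom n2$, so these cases are excluded.

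The remaining cases are $p=1$ and $p=n-1$; by the dual symmetry it suffices to treat $p=1$. Then $\mu=(a,0,\dots,0)$ and $\dim=\binom{n+a-1}{a}$. For $a\ge2$ this is at least $\binom{n+1}2>\binom n2$, so $a=1$, which is the standard representation. For $p=n-1$ the argument must be applied carefully: writing $\mu=(\mu_1,\dots,\mu_{n-1},0)$ with all $\mu_i\ge1$, the dual $\mu^*$ has $\mu^*_1=\mu_1-0$ and $\mu^*_i=\mu_1-\mu_{n+1-i}$. The dual therefore has $p^*$ nonzero entries, where $p^*$ counts the $i$ with $\mu_{n+1-i}<\mu_1$, and we rerun the same dichotomy on it. It is cleaner to argue directly. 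Since $\mu_{n-1}\ge1$, we have $\dim V_\mu=\dim V_{\mu-(1^{n-1},0)\,\text{shifted}}$ via duality. Alternatively, a single bound suffices: if $\mu_1\ge2$ or two distinct positive levels occur, the factors with $i\le n-1<j=n$ already give more than $\binom n2$. The main obstacle is this bookkeeping for small $n$, where $\binom n2$ is tiny. For $n\le 3$ we check directly, and here the hypothesis $m<\binom n2$ leaves very few options ($n=3$ gives $m<3$, so $m=1$ only).

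Finally we identify the exceptional representations. $V_0$ is trivial, giving (i). $V_{(1,0,\dots,0)}$ is the standard representation, and any $n$-dimensional representation equivalent to it is $u(\cdot)u^{-1}$ for some unitary intertwiner $u$, by unitarity of the intertwiner. $V_{(1,\dots,1,0)}\cong\det\otimes\overline{\mathrm{std}}$. Absorbing the determinant powers into $k$ yields the two forms in (ii), and in both cases $m=n$.
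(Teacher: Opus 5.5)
Your plan of proving the dimension gap directly from Weyl's formula, instead of quoting the classification of low-dimensional irreducible $\SU(n)$-modules as the paper does, can work. Your treatment of $\mu=0$, of $p=1$ and of $2\le p\le n-2$ is correct. The genuine gap is the case $p=n-1$.

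Duality moves $\mu$ out of this case only if $\mu^*$ has fewer than $n-1$ positive entries, and that happens exactly when $\mu_1=\mu_2$. When $\mu_1>\mu_2\ge\dots\ge\mu_{n-1}\ge 1$, both $\mu$ and $\mu^*$ have $n-1$ positive entries. So your claim that ``by the dual symmetry it suffices to treat $p=1$'' is wrong, and ``rerunning the same dichotomy'' on $\mu^*$ is circular. Neither fallback you offer closes the case:
\begin{itemize}
\item The identity with $\mu-(1^{n-1},0)$ ``shifted'' is not valid. Subtracting $(1,\dots,1,0)$ lowers every gap $\mu_i-\mu_n$ and so changes the dimension.
\item The proposed single bound is false. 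For the adjoint weight $\mu=(2,1,\dots,1,0)$, the factors with $j=n$ multiply to $\frac{n+1}{n-1}\prod_{i=2}^{n-1}\frac{n-i+1}{n-i}=n+1$, which is smaller than $\binom n2$ for every $n\ge 4$.
\end{itemize}

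There is a related error in your key step. The monotonicity principle as you state it is false, because raising $\mu_i$ also shrinks the gaps $\mu_h-\mu_i$ for $h<i$. For $n=3$, $\dim V_{(2,1,0)}=8>6=\dim V_{(2,2,0)}$. The correct principle is gap domination: if $\mu_i-\mu_j\ge\nu_i-\nu_j$ for all $i<j$, then $\dim V_\mu\ge\dim V_\nu$, factor by factor in Weyl's formula. This still justifies your bound $\dim V_\mu\ge\binom np$, since passing to $(1^p,0^{n-p})$ weakly lowers every gap.

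Gap domination also fills the missing case. If $\mu_1>\mu_2$, $\mu_{n-1}\ge 1$ and $\mu_n=0$, then:
\begin{itemize}
\item $\mu_1-\mu_j\ge 1$ for $2\le j\le n-1$;
\item $\mu_1-\mu_n\ge 2$;
\item $\mu_i-\mu_n\ge 1$ for all $i\le n-1$.
\end{itemize}
So $\mu$ dominates $(2,1,\dots,1,0)$ gapwise, and $\dim V_\mu\ge n^2-1>\binom n2$. Concretely, the factors indexed by $(1,j)$ with $2\le j\le n-1$ contribute a further factor of at least $n-1$ on top of the $n+1$ coming from $j=n$. With these corrections and your direct check for $n\le 3$, the argument becomes a self-contained proof of the classification fact that the paper only cites.
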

	
	\begin{proof}
		Write the highest weight as $\lambda$, and then
		$
		V_\lambda\cong (\det)^k\otimes V_\mu,$ $\mu_n=0.$
		
		If $m=1$, then $V_\mu$ is trivial, so $\varphi=(\det)^k$.
		If $m>1$, then $V_\mu$ is nontrivial and $1<\dim V_\mu<\binom{n}{2}$. By the classical classification of low-dimensional irreducible representations for type $A_{n-1}$, the only such irreducible $\SU(n)$-modules are the standard representation and its dual; see, for example, \cite{FH} and \cite{Weyl}. Under our normalization $\mu_n=0$, these correspond respectively to
		\[
		\mu=(1,0,\dots,0)
		\qquad\text{and}\qquad
		\mu=(\underbrace{1,\dots,1}_{n-1},0).
		\]
		Hence $V_\mu$ is either the standard representation or $\Alt^{n-1}(\bbC^n)$. Moreover,
		$
		\Alt^{n-1}(\bbC^n)\cong \det\otimes (\bbC^n)^*.
		$
		Therefore, up to a determinant twist, $\varphi$ is either the standard or the dual representation.
	\end{proof}
	
	\begin{lemma}\label{app:lem-projective-metric-compare}
		For every $N\ge 1$ and all $x,y\in \PU(N)$,
		\[
		\frac{2}{\pi}\,\delta_N(x,y)\le \bar d_N(x,y)\le \delta_N(x,y).
		\]
	\end{lemma}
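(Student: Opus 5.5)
The plan is to reduce both inequalities to a comparison on $\U(N)$ itself, between the normalized Hilbert--Schmidt distance $\dHSk{N}$ and the bi-invariant Riemannian distance, which I denote $\delta^{\U}_N$. Then I pass to the quotient by $\mathbb{T}$ by taking infima over scalar lifts.

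The first step is the standard description of the quotient geodesic distance. Since $\mathbb{T}$ is a closed central subgroup acting by isometries of the bi-invariant metric, $\U(N)\to\PU(N)$ is a Riemannian submersion. Hence for lifts $u,v$ of $x,y$,
\[
\delta_N(x,y)=\inf_{\omega\in\mathbb{T}}\delta^{\U}_N(\omega u,v).
\]
The inequality $\ge$ holds because horizontal lifts of curves in $\PU(N)$ preserve length. The inequality $\le$ holds because projecting a curve does not increase length. I would state this briefly as standard; it is the only place where any Riemannian geometry enters.

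The second step is the comparison on $\U(N)$ for $w:=u^*v$. By bi-invariance it suffices to compare $\dHSk{N}(1,w)$ with $\delta^{\U}_N(1,w)$. Diagonalize $w$ with eigenvalues $e^{i\theta_j}$, where $\theta_j\in[-\pi,\pi]$.
\begin{itemize}
\item Upper bound for the geodesic distance: the curve $t\mapsto \exp(itH)$, $t\in[0,1]$, where $H$ is Hermitian with eigenvalues $\theta_j$ and the same eigenvectors as $w$, has length $\|H\|_{2,N}$. Therefore $\delta^{\U}_N(1,w)\le (\frac1N\sum_j\theta_j^2)^{1/2}$.
\item Exact Hilbert--Schmidt distance: $\dHSk{N}(1,w)=(\frac1N\sum_j 4\sin^2(\theta_j/2))^{1/2}$.
\item Lower bound for the geodesic distance: $\dHSk{N}$ is the chordal distance for the flat norm $\|\cdot\|_{2,N}$ on $\M_N(\bbC)$, and the Riemannian length of a curve is its length in that norm. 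So $\dHSk{N}\le\delta^{\U}_N$ always.
\end{itemize}
The elementary estimate $|2\sin(\theta/2)|\ge \frac{2}{\pi}|\theta|$ for $|\theta|\le\pi$ (concavity of $\sin$ on $[0,\pi/2]$) then gives
\[
\tfrac{2}{\pi}\,\delta^{\U}_N(u,v)\le \dHSk{N}(u,v)\le \delta^{\U}_N(u,v).
\]

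The third step takes the infimum over $\omega\in\mathbb{T}$ after replacing $u$ by $\omega u$, using the first step on both sides. The left-hand side becomes $\frac{2}{\pi}\delta_N(x,y)$, the middle becomes $\bar d_N(x,y)$, and the right-hand side becomes $\delta_N(x,y)$, which is the claim.

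I expect no real obstacle. The only point needing care is the first step, namely that the quotient geodesic distance equals the infimum of the upstairs geodesic distances over the fibre. If one prefers to avoid submersion language, it suffices to observe that any piecewise smooth curve in $\PU(N)$ lifts to one in $\U(N)$ whose length equals the horizontal length; only this lifting property is needed.
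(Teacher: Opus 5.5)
Your proposal is correct and follows the paper's proof: you compare $\dHSk{N}$ with the bi-invariant geodesic distance on $\U(N)$ through the eigenvalue angles $\theta_j\in[-\pi,\pi]$ and the bound $2\sin(|t|/2)\ge\frac{2}{\pi}|t|$, then take the infimum over $\omega\in\mathbb{T}$. The differences are refinements. You justify that the quotient geodesic distance is the infimum over the fibre, which the paper uses implicitly. You also replace the paper's unproved exact formula $\dist_{\mathrm{geo}}(I,w)^2=\frac1N\sum_j\theta_j^2$ by the one-parameter-subgroup upper bound together with the chord--arc inequality, and these two facts are all the argument actually needs.
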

	
	\begin{proof}
		By bi-invariance it suffices to consider $x=\bar I$ and $y=\bar u$.
		For each $\omega\in \mathbb{T}$, write the eigenvalues of $\omega u$ as $e^{i\theta_1},\dots,e^{i\theta_N}$ with $\theta_j\in [-\pi,\pi]$. Then
		\[
		\dHSk{N}(I,\omega u)^2=\frac1N\sum_{j=1}^N 4\sin^2\!\Bigl(\frac{\theta_j}{2}\Bigr),
		\]
		whereas the geodesic distance on $\U(N)$ is
		\[
		\dist_{\mathrm{geo}}(I,\omega u)^2=\frac1N\sum_{j=1}^N \theta_j^2.
		\]
		Since for $|t|\le \pi$ we have 
		$
		\frac{2}{\pi}|t|\le 2\sin\Bigl(\frac{|t|}{2}\Bigr)\le |t|$,
		we get
		\[
		\frac{2}{\pi}\,\dist_{\mathrm{geo}}(I,\omega u)
		\le \dHSk{N}(I,\omega u)
		\le \dist_{\mathrm{geo}}(I,\omega u)
		\qquad (\omega\in \mathbb{T}).
		\]
		Taking the infimum over $\omega\in \mathbb{T}$ gives the claim.
	\end{proof}
	
	For a compact metric space $(X,d)$, we denote by
	$
	\mathcal N(X,d,r)
	$
	the smallest cardinality of an $r$-net in $X$.
	
	\begin{lemma}\label{app:lem-entropy-lower}
		There exists an absolute constant $r_0>0$ such that for every fixed $0<r<r_0$ there exists $c_r>0$ with
		\[
		\log \mathcal N\bigl(\PU(n),\bar d_n,r\bigr)\ge c_r(n^2-1)
		\qquad (n\ge 2).
		\]
	\end{lemma}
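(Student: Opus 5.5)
Our plan is a volume argument: we pass from the normalized trace metric on $\PU(n)$ to the geodesic metric $\delta_n$ via Lemma~\ref{app:lem-projective-metric-compare}, and then use a lower bound on the covering number of a compact Riemannian manifold in terms of small-ball volumes. Since $\bar d_n\ge \frac{2}{\pi}\delta_n$, every $\bar d_n$-ball of radius $r$ lies inside a $\delta_n$-ball of radius $\frac{\pi}{2}r$. So if $x_1,\dots,x_N$ is an $r$-net for $\bar d_n$, the Haar probability measure $\mu$ on $\PU(n)$ satisfies
\[
1\le N\cdot \sup_x \mu\bigl(B_{\delta_n}(x,\tfrac{\pi}{2}r)\bigr).
\]
Since $\mu$ is invariant, the supremum is just $\mu(B_{\delta_n}(\bar I,\tfrac{\pi}{2}r))$. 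It therefore suffices to prove that this ball has Haar measure at most $e^{-c_r(n^2-1)}$ for $r$ below an absolute threshold.

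To bound the ball measure, we lift to $\U(n)$ and avoid Riemannian volume comparison. We use instead a concentration/eigenvalue computation, which is cleaner in high dimension. A point $\bar u$ with $\bar d_n(\bar I,\bar u)\le r$ has a lift $u$ with $\|u-\omega I\|_{2,n}\le r$ for some $\omega\in\mathbb T$. So $\mu(\bar d_n\text{-ball})$ equals the Haar measure in $\U(n)$ of $\{u:\inf_\omega\|u-\omega\|_{2,n}\le r\}$. We cover $\mathbb T$ by $O(1/r)$ points $\omega_j$. This reduces the problem to bounding $\mathrm{Haar}_{\U(n)}\{u:\|u-\omega_j\|_{2,n}\le 2r\}$ by $e^{-c n^2}$, since the polynomial-in-$1/r$ factor is absorbed for $n\ge2$ after shrinking $c_r$. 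By invariance under multiplication by scalars, we may take $\omega_j=1$.

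For this last bound we use the Weyl integration formula. The eigenvalue density of Haar measure on $\U(n)$ is $\frac{1}{n!(2\pi)^n}\prod_{i<j}|e^{i\theta_i}-e^{i\theta_j}|^2$. The condition $\frac1n\sum|e^{i\theta_j}-1|^2\le 4r^2$ forces at least $n/2$ eigenvalues into an arc of length $O(r)$ around $1$. For those pairs, $|e^{i\theta_i}-e^{i\theta_j}|\le O(r)$, which gives a factor $O(r)^{2\binom{n/2}{2}}$. All other factors are bounded by $2$ or $4$. Comparing with the normalization $\int\prod|\cdot|^2=n!(2\pi)^n$, the measure is at most $(Cr)^{n^2/4}\cdot 4^{\,O(n)}\cdot\binom{n}{n/2}$, roughly. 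This is $\le e^{-c n^2}$ once $r<r_0$ with $Cr_0<1$, and for large $n$. Small $n\ge2$ are handled separately: $\PU(n)$ is a fixed compact manifold of positive dimension, so $\mathcal N\ge 2$ for small $r$, and $c_r$ can be decreased to cover finitely many $n$.

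The main obstacle is the Weyl-formula estimate: making the cross terms and the $\binom{n}{n/2}$ choice-of-indices factor rigorous while keeping an $n^2$ exponent. A fallback is the approach via volume comparison. $\PU(n)$ with the bi-invariant metric has nonnegative Ricci curvature, so Bishop–Gromov bounds $\mathrm{vol}(B(\rho))\le \omega_{d}\rho^{d}$ with $d=n^2-1$. The total volume can then be computed explicitly by Macdonald's formula, and with the normalized metric the diameter is $O(1)$. Either route yields $\mathcal N\ge (c/r)^{n^2-1}$, i.e. the claim with $c_r\approx\log(c/r)$.
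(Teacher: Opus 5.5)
Your fallback is exactly the paper's proof. The paper passes to $\delta_n$ via Lemma~\ref{app:lem-projective-metric-compare}, bounds $\vol_n(B(e,r))\le\omega_{n^2-1}r^{n^2-1}$ by Bishop--Gromov, computes $\vol_n(\PU(n))$ from Macdonald's formula and the Riemannian submersion $\U(n)\to\PU(n)$, and finishes with Stirling. This gives $\log\mathcal N(\PU(n),\bar d_n,r)\ge (n^2-1)\log\frac{2}{\pi r}-c(n^2-1)$ for all $n\ge 2$ at once. Your main route via the Weyl integration formula is genuinely different and can be made to work. It bounds the Haar measure of $\bar d_n$-balls directly, so the detour through $\delta_n$ in your first paragraph is not needed. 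However, one estimate is wrong as written: the cross terms are not $4^{O(n)}$. Markov's inequality puts $s=\lceil n/2\rceil$ eigenvalues into $|e^{i\theta_j}-1|\le 2\sqrt2\,r$, but there remain $\binom n2-\binom s2\approx 3n^2/8$ pairs not both in $J$. The correct bound is therefore
\[
\mu_{\U(n)}\{u:\|u-1\|_{2,n}\le 2r\}\le\binom{n}{s}\frac{1}{n!}\,(32r^2)^{\binom s2}\,4^{\binom n2-\binom s2}=\binom{n}{s}\frac{2^{n(n-1)}}{n!}\,(8r^2)^{\binom s2}\le (2^{11}r^2)^{n^2/8}\,e^{O(n\log(1/r))}.
\]
This still gives $e^{-cn^2}$, but only because the small factor also carries an $n^2$ exponent, and only below an explicit absolute threshold (here $r<2^{-11/2}$). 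The $1/n!$ from the normalization cannot compensate the $e^{\Theta(n^2)}$ cross terms, so your constant $C$ in $(Cr)^{n^2/4}$ must absorb an extra factor $8$ coming from them.

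Two smaller points. First, your bound only takes effect for $n\ge n_0(r)$. The small-$n$ case therefore needs a threshold for $\mathcal N\ge 2$ that is uniform in $n$; a per-$n$ ``fixed compact manifold'' argument does not suffice, since $n_0$ depends on $r$. The clock matrix $u=\mathrm{diag}(1,\zeta,\dots,\zeta^{n-1})$ with $\zeta=e^{2\pi i/n}$ satisfies $\|u-\omega\|_{2,n}^2=2$ for every $\omega\in\mathbb T$. By homogeneity, $\mathcal N\ge 2$ for all $r<\sqrt2$ and all $n\ge 2$, whence $\log\mathcal N\ge\frac{\log 2}{n_0(r)^2-1}(n^2-1)$ for $n\le n_0(r)$. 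Second, the $O(1/r)$ loss from covering $\mathbb T$ is handled by this case split, not by shrinking $c_r$.

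As for the trade-off: your argument is more elementary, needing only the Weyl density and counting, with no curvature, no volume formula and no metric comparison. But it yields only $(Cr)^{n^2/4}$ instead of the dimension-sharp $(Cr)^{n^2-1}$, needs a smaller $r_0$, and requires a case split in $n$. The paper's volume argument is uniform in $n$ and sharp in the exponent, and it mirrors the Bishop--Gromov upper bound of Lemma~\ref{app:lem-covering-upper}, at the cost of heavier geometric input. For the lemma as stated, and for its use in Proposition~\ref{app:prop-coarse-bound} (which needs only one fixed $s_0$ with $c_{s_0}>0$), either suffices.
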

	
	\begin{proof}
		By Lemma~\ref{app:lem-projective-metric-compare},
		\[
		\mathcal N\bigl(\PU(n),\bar d_n,r\bigr)
		\ge
		\mathcal N\Bigl(\PU(n),\delta_n,\frac{\pi r}{2}\Bigr).
		\]
		Let $\vol_n$ denote the Riemannian volume associated with $\delta_n$.
		By homogeneity, we have
		\[
		\mathcal N\bigl(\PU(n),\delta_n,r\bigr)
		\ge
		\frac{\vol_n(\PU(n))}{\vol_n(B(e,r))}.
		\]
		
		It is well-known that the Ricci curvature of $\PU(n)$ with respect to $\delta_n$ is nonnegative. By the Bishop--Gromov comparison theorem (see, for example, \cite{Petersen}*{Chapter~9}), for every $0<r<\infty$,
		$
		\vol_n(B(e,r))\le \omega_{n^2-1}\, r^{n^2-1},
		$
		where $\omega_{n^2-1}$ is the Euclidean volume of the unit ball in $\bbR^{n^2-1}$:
		\[
		\omega_{n^2-1}=\frac{\pi^{(n^2-1)/2}}{\Gamma((n^2-1)/2+1)}.
		\]
		Therefore
		\begin{equation}\label{app:eq-covering-lower}
			\mathcal N\bigl(\PU(n),\delta_n,r\bigr)
			\ge
			\frac{\vol_n(\PU(n))}{\omega_{n^2-1}\, r^{n^2-1}}.
		\end{equation}
		The classical volume formula for $\U(n)$ with respect to the unnormalized trace metric is
		\begin{equation}\label{app:eq-u-volume-hs}
			\vol\bigl(\U(n),g_n^{\mathrm{HS}}\bigr)
			=
			\frac{(2\pi)^{n(n+1)/2}}{\prod_{j=1}^{n-1} j!};
		\end{equation}
		see, for example, Macdonald's volume formula for compact Lie groups. Now, for the normalized trace metric $g_n$, we have an additional factor of $n^{-n^2/2}$. Hence
		\begin{equation}\label{app:eq-u-volume-normalized}
			\vol\bigl(\U(n),g_n\bigr)
			=
			n^{-n^2/2}\,
			\frac{(2\pi)^{n(n+1)/2}}{\prod_{j=1}^{n-1} j!}.
		\end{equation}
		
		The quotient map $q:\U(n)\to \PU(n)=\U(n)/\mathbb{T}$
		is a Riemannian submersion, because the scalar circle is central and totally geodesic. The fiber is $\mathbb{T}=\{e^{it}I_n:\ t\in \bbR/2\pi\bbZ\}$ and has length $2\pi$. It follows that
		\begin{equation}\label{app:eq-pu-volume}
			\vol_n\bigl(\PU(n)\bigr)
			=
			\frac{1}{2\pi}\,
			\vol\bigl(\U(n),g_n\bigr)
			=
			n^{-n^2/2}\,
			\frac{(2\pi)^{(n^2+n-2)/2}}{\prod_{j=1}^{n-1} j!}.
		\end{equation}
		
		Now, since $j! \le j^j$, we have $\sum_{j=1}^{n-1} \log(j!) \le \frac{n^2}{2}\log(n)$. Therefore,
		\[
		\log \vol_n\bigl(\PU(n)\bigr)
		\ge
		-n^2 \log n+\frac{n^2}2 \log(2\pi).
		\]
		Using
		$
		\omega_{n^2-1}=\frac{\pi^{(n^2-1)/2}}{\Gamma((n^2-1)/2+1)},
		$
		Stirling's formula yields
		\[
		-\log \omega_{n^2-1}
		\ge
		\frac{n^2-1}{2}\log(n^2-1)-\frac{n^2-1}{2} \log(2\pi e).
		\]
		Combining this with \eqref{app:eq-covering-lower} and \eqref{app:eq-pu-volume}, we obtain
		\[
		\log \mathcal N\bigl(\PU(n),\delta_n,r\bigr)
		\ge
		\log \vol_n\bigl(\PU(n)\bigr)-\log \omega_{n^2-1}-(n^2-1)\log r
		\]
		and therefore
		\[
		\begin{aligned}
			\log \mathcal N\bigl(\PU(n),\delta_n,r\bigr)
			\ge{}& -n^2 \log n + \frac{n^2}2 \log(2\pi) +\frac{n^2-1}{2}\log(n^2-1)\\
			& -\frac{n^2-1}{2} \log(2 \pi e)  -(n^2-1)\log r.
		\end{aligned}
		\]
		We now estimate the $n$-dependent terms. First,
		\[
		\frac{n^2-1}{2}\log(n^2-1)-n^2\log n
		=
		-\log n+\frac{n^2-1}{2}\log\Bigl(1-\frac{1}{n^2}\Bigr).
		\]
		Since $n\ge 2$, we have $1-1/n^2\ge 3/4$, and therefore
		\[
		\frac{n^2-1}{2}\log(n^2-1)-n^2\log n
		\ge
		-\log n+\frac{n^2-1}{2}\log(3/4).
		\]
		Also $\log n\le n^2-1$ for $n\ge 2$, so the right-hand side is bounded below by $-c(n^2-1)$ for some absolute constant $c>0$. Next,
		\[
		\frac{n^2}2\log(2\pi)-\frac{n^2-1}{2}\log(2\pi e)
		= \frac12 \log(2\pi) - \frac{n^2-1}2 \log(e) \geq - \frac{n^2-1}{2}.
		\]
		Combining these estimates, we get a constant $c>0$ and
		\[
		\log \mathcal N\bigl(\PU(n),\delta_n,r\bigr)
		\ge
		(n^2-1)\log(1/r)-c(n^2-1)
		\]
		for every $r>0$. Now let $y\in \PU(n)$. By Lemma~\ref{app:lem-projective-metric-compare}, if $\bar d_n(x,y)<r$, then
		$
		\delta_n(x,y)\le \frac{\pi}{2}\,\bar d_n(x,y)<\frac{\pi r}{2}.
		$
		Thus every $\bar d_n$-ball of radius $r$ is contained in the $\delta_n$-ball of radius $\pi r/2$ with the same center. Consequently, every $r$-net for $(\PU(n),\bar d_n)$ is a $\pi r/2$-net for $(\PU(n),\delta_n)$, and hence
		\[
		\log \mathcal N\bigl(\PU(n),\bar d_n,r\bigr)
		\ge
		(n^2-1)\log\Bigl(\frac{2}{\pi r}\Bigr)-c(n^2-1).
		\]
		Set
		$
		r_0:=\min\Bigl\{1,\,\frac{2}{\pi}e^{-c}\Bigr\}.
		$
		Then for every fixed $0<r<r_0$, the constant
		$
		c_r:=\log\Bigl(\frac{2}{\pi r}\Bigr)-c
		$
		is positive, and therefore $\log \mathcal N\bigl(\PU(n),\bar d_n,r\bigr)\ge c_r(n^2-1)$.
	\end{proof}
	
	\begin{lemma}\label{app:lem-covering-upper}
		Let $\rho:\U(n)\to \U(m)$ be an irreducible representation whose induced projective representation is non-trivial, and let $\bar\rho:\PU(n)\to \PU(m)$
		be the induced projective representation.
		Then for every $0<r<1$,
		\[
		\log \mathcal N\bigl(\bar\rho(\PU(n)),\bar \dmet_m,r\bigr)
		\le
		(n^2-1)\log\Bigl(\frac{4\pi m^2}{r}\Bigr).
		\]
	\end{lemma}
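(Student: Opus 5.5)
The plan is to push an $r'$-net of $\PU(n)$ forward through a Lipschitz map. The image $\bar\rho(\PU(n))$ is the image of $\PU(n)$ under $\bar\rho$, so it suffices to show that $\bar\rho:(\PU(n),\bar\dmet_n)\to(\PU(m),\bar\dmet_m)$ is Lipschitz with a constant polynomial in $m$. We then cover $\PU(n)$ by few small balls, using a volume argument in the opposite direction to Lemma~\ref{app:lem-entropy-lower}.

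\emph{Step 1 (Lipschitz bound).} We will work with the geodesic metrics $\delta_n,\delta_m$ and pass to $\bar\dmet$ at the end via Lemma~\ref{app:lem-projective-metric-compare}. Let $X$ be skew-Hermitian with $\|X\|_{2,n}$ small. We will bound $\|d\rho(X)\|_{2,m}$ by $\|d\rho(X)\|_{\infty}$. Diagonalize $X$ with eigenvalues $i t_j$. Then $d\rho(X)$ has eigenvalues $i\langle w,t\rangle$, where $w$ ranges over the weights of $\rho$. The highest-weight coordinates give $|\lambda_i|$-type bounds, and we normalize $\lambda_n=0$ after twisting by a power of $\det$; this does not change $\bar\rho$. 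Since $\dim V_\mu\ge$ some function of $\mu$ by \eqref{app:eq:weyl-dim} (for instance $\dim V_\mu\ge \mu_1+1$, using the $(1,n)$ factor), every weight satisfies $\sum_j|w_j|\le n\mu_1\le nm$. Hence $|\langle w,t\rangle|\le nm\,\|t\|_\infty\le nm\sqrt n\,\|X\|_{2,n}$. Using $n\le m$ (a nontrivial projective irrep has $m\ge n$) gives a bound of the form $\|d\rho(X)\|_{2,m}\le m^2\|X\|_{2,n}$ up to constants. In the crude form this is $m^{5/2}$, so one may need to refine it, for example by bounding $\sum_j|w_j|$ and $\|t\|_\infty$ more cleverly. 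Alternatively, we can exploit that the target bound $4\pi m^2/r$ only enters through the logarithm, so any polynomial $m^C$ bound could be absorbed by adjusting constants, though the statement asks precisely for $m^2$. Integrating along geodesics then gives $\delta_m(\bar\rho x,\bar\rho y)\le C m^2\delta_n(x,y)$, and therefore $\bar\dmet_m(\bar\rho x,\bar\rho y)\le C m^2\cdot\frac{\pi}{2}\bar\dmet_n(x,y)$.

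\emph{Step 2 (upper covering of $\PU(n)$).} We will bound $\mathcal N(\PU(n),\delta_n,s)$ by the maximal $s/2$-separated set, whose disjoint balls give $\vol(\PU(n))/\inf\vol B(s/2)$. Rather than a lower volume bound for small balls, which is delicate, the simpler route is to cover the maximal torus together with its conjugates. An even simpler alternative is to embed into the Euclidean space of $\M_n$ with $\|\cdot\|_{2,n}$ restricted to the $(n^2)$-dimensional real ball of radius $2$, which has a covering by at most $(6/s)^{2n^2}$ points. That route gives exponent $2n^2$ rather than $n^2-1$, so to obtain $n^2-1$ we will use the Lie algebra of $\PU(n)$: the exponential map from the ball of radius $\pi\sqrt{\cdot}$ in $\mathfrak{su}(n)$ (real dimension $n^2-1$, norm $\|\cdot\|_{2,n}$) onto $\PU(n)$ is $1$-Lipschitz for $\delta_n$ (nonnegative curvature) and surjective, with diameter at most $\pi$. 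A Euclidean ball of radius $\pi$ in $\bbR^{n^2-1}$ is covered by $(3\pi/s)^{n^2-1}$ balls of radius $s$.

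\emph{Step 3 (combine).} We will take $s=r/(Cm^2)$ so that the images of $s$-balls lie in $r$-balls for $\bar\dmet_m$. This gives $\log\mathcal N\le (n^2-1)\log(3\pi C m^2/r)$, and the constants are tuned to get $4\pi m^2/r$. The main obstacle is Step 1: obtaining the sharp Lipschitz constant $O(m^2)$ for the derived representation in normalized Hilbert--Schmidt norms, which requires controlling weights of $V_\mu$ in terms of $\dim V_\mu$ via Weyl's formula.
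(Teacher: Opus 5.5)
\textbf{Verdict: genuine gap in Step~1.} Your covering strategy works, but your Lipschitz estimate only gives an exponent $5/2$ in place of $2$, so the inequality as stated is not proved.

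\textbf{Step~2 is a valid, different route.} The paper runs a packing argument with Bishop--Gromov on the image $\bar\rho(\PU(n))$. That needs the image to be a homogeneous manifold of nonnegative Ricci curvature and diameter at most $\pi\alpha$. You instead push a Euclidean net of the $\pi$-ball in $(\mathfrak{su}(n),\|\cdot\|_{2,n})$ through the surjection $\bar\rho\circ\exp=\exp\circ d\bar\rho$. This avoids all the Riemannian geometry of the image. If $\|d\rho(X)\|_{2,m}\le\alpha\|X\|_{2,n}$, you get $\mathcal N\le(1+2\pi\alpha/r)^{n^2-1}$ directly. Two adjustments:
\begin{itemize}
\item Compose with $\exp$ rather than passing through $\bar\dmet_n$. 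Lemma~\ref{app:lem-projective-metric-compare} would cost an extra factor $\pi/2$, which matters for the constant $4\pi$.
\item Justify the $1$-Lipschitz property of $\exp$ by $d\exp_X=dL_{\exp X}\circ\frac{1-e^{-\ad X}}{\ad X}$, where $\ad X$ is skew, so this operator has norm at most $1$. Appealing to nonnegative curvature via Rauch is not enough, because for $n\ge3$ the radius-$\pi$ ball contains conjugate points.
\end{itemize}

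\textbf{The gap in Step~1.} Your bound $|\langle w,t\rangle|\le\|w\|_1\|t\|_\infty\le n\mu_1\sqrt n\,\|X\|_{2,n}$ only gives $\alpha\lesssim m^{5/2}$, and you leave the improvement to $m^2$ open. Absorbing the exponent into constants would suffice for Proposition~\ref{app:prop-coarse-bound}, but it does not prove the lemma as stated. Either of the following closes the gap.
\begin{itemize}
\item \emph{The paper's argument.} Since $\mathfrak{su}(n)$ is simple, the pulled-back invariant norm is exactly $\alpha\|\cdot\|_{2,n}$. So $\alpha$ can be read off from one short vector $H=i(E_{11}-E_{nn})$, with $\|H\|_{2,n}=\sqrt{2/n}$ and $\|d\rho(H)\|\le a=\lambda_1-\lambda_n$. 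This gives $\alpha\le a\sqrt{n/2}\le m^2$.
\item \emph{A fix inside your own weight computation.} Every weight of $V_\mu$ (normalized so $\mu_n=0$) has coordinates in $[0,\mu_1]$, and $\sum_j t_j=0$ lets you center the weight. Cauchy--Schwarz then gives $|\langle w,t\rangle|=|\langle w-\tfrac{\mu_1}{2}\mathbf 1,t\rangle|\le\tfrac{\mu_1}{2}\sqrt n\cdot\sqrt n\,\|X\|_{2,n}$. Hence $\alpha\le n\mu_1/2\le m^2/2$, and your Euclidean covering yields the stronger bound $(2\pi m^2/r)^{n^2-1}$.
\end{itemize}

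\textbf{A smaller error.} The inequality $\dim V_\mu\ge\mu_1+1$ does not follow from the $(1,n)$ factor of Weyl's formula, which is only $1+\mu_1/(n-1)$. Use the adjacent factors instead, $\prod_{j}(1+\mu_j-\mu_{j+1})\ge1+\mu_1-\mu_n$, or the highest-root $\SU(2)$ argument used in the paper.
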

	
	\begin{proof}
		Let $\lambda=(\lambda_1,\dots,\lambda_n)$ be the highest weight of $\rho$, and set
		$
		a:=\lambda_1-\lambda_n\in \bbN.
		$
		The Lie algebra of $\PU(n)$ is $\mathfrak{su}(n)$, equipped with the norm
		\[
		\|X\|_{n}:=\Bigl(\frac1n\tr(X^*X)\Bigr)^{1/2}.
		\]
		Because $\bar\rho$ is a non-trivial homomorphism from the simple compact Lie group $\PU(n)$, the pull-back of the normalized trace metric from $\PU(m)$ to $\PU(n)$ is a bi-invariant metric obtained from $\|\cdot\|_n$ by multiplying with a scalar $\alpha=\alpha(\rho)$:
		$
		\|\dmet\rho(X)\|_{m}=\alpha\,\|X\|_{n}$ 
		(for $X\in \mathfrak{su}(n)$).
		Let
		$
		H:=i(E_{11}-E_{nn})\in \mathfrak{su}(n).
		$
		Then
		\[
		\|H\|_n=\Bigl(\frac1n\tr(H^*H)\Bigr)^{1/2}=\sqrt{\frac{2}{n}}.
		\]
		By standard highest-weight theory for the embedded $\SU(2)$-subgroup corresponding to the highest root $e_1-e_n$, every weight of the restriction of $\rho$ to this subgroup lies in $\{-a,-a+2,\dots,a-2,a\}$; see \cite{KnappLieGroups}. Hence every eigenvalue of $\dmet\rho(H)$ has absolute value at most $a$, and therefore $\|\dmet\rho(H)\|_m\le a$. Since $\|\dmet\rho(H)\|_m=\alpha\,\|H\|_n$, it follows that
		$
		\alpha\le a\sqrt{\frac{n}{2}}.
		$
		
		Again by standard highest-weight theory, the restriction of $\rho$ to the highest-root $\SU(2)$-subgroup contains an irreducible $\SU(2)$-subrepresentation of dimension $a+1$, so $a+1\le m$; see \cite{KnappLieGroups}. Moreover, the smallest non-trivial projective irreducible representation of $\SU(n)$ has dimension $n$, so $n\le m$; see \cite{FH} and \cite{Weyl}. Consequently,
		$
		\alpha\le (m-1)\sqrt{\frac{m}{2}}\le m^2.
		$
		
		Let $\delta_{\bar\rho(\PU(n))}$ be the quotient geodesic distance on $\bar\rho(\PU(n))$ induced from $\delta_m$. Then $\bar\rho:\PU(n)\to \bar\rho(\PU(n))$ is a local homothety of factor $\alpha$, so $\bar\rho(\PU(n))$ is a compact homogeneous Riemannian manifold of real dimension $n^2-1$, with nonnegative Ricci curvature.
		
		By Lemma~\ref{app:lem-projective-metric-compare}, it suffices to cover $\bar\rho(\PU(n))$ with respect to $\delta_{\bar\rho(\PU(n))}$, because $\bar \dmet_m\le \delta_{\bar\rho(\PU(n))}$ on $\bar\rho(\PU(n))$. Also $\diam(\PU(n),\delta_n)\le \pi$: indeed, every class in $\PU(n)$ has a representative $u\in \SU(n)$ of the form $u=e^{iH}$ with $H=H^*$, $\tr(H)=0$, and all eigenvalues of $H$ in $[-\pi,\pi]$, so $\delta_n(\bar I,\bar u)=\|H\|_n\le \pi$. Therefore
		$\diam\bigl(\bar\rho(\PU(n)),\delta_{\bar\rho(\PU(n))}\bigr)\le \pi\alpha$.
		We use the following form of the Bishop--Gromov comparison theorem (see, for example, \cite{Petersen}*{Chapter~9}): if $M$ is a complete $d$-dimensional Riemannian manifold with nonnegative Ricci curvature, then for every $p\in M$ and all $0<s<t$,
		\[
		\frac{\vol(B(p,t))}{\vol(B(p,s))}\le \Bigl(\frac{t}{s}\Bigr)^d.
		\]
		
		Take a maximal $r/2$-separated subset of $\bar\rho(\PU(n))$ in the metric $\delta_{\bar\rho(\PU(n))}$, say $y_1,\dots,y_N$. Then the balls $B(y_j,r/4)$ are disjoint and all lie inside $B(y_1,\pi\alpha)$. Bishop--Gromov comparison for nonnegative Ricci curvature gives
		\[
		N\le \frac{\vol(B(y_1,\pi\alpha))}{\vol(B(y_1,r/4))}
		\le \Bigl(\frac{\pi\alpha}{r/4}\Bigr)^{n^2-1}
		\le \Bigl(\frac{4\pi m^2}{r}\Bigr)^{n^2-1},
		\]
		using $\alpha\le m^2$ in the last step. Since a maximal $r/2$-separated set is an $r/2$-net, it is also an $r$-net for $\bar \dmet_m$. Hence
		\[
		\log \mathcal N\bigl(\bar\rho(\PU(n)),\bar \dmet_m,r\bigr)
		\le
		(n^2-1)\log\Bigl(\frac{4\pi m^2}{r}\Bigr).
		\]
		This finishes the proof.
	\end{proof}

	\begin{lemma}\label{app:lem-irreducible-reduction}
		Fix $\eta\in (0,1/3)$. Then there exists $\delta_\eta>0$ with the following property.
		If $\varphi:\U(n)\to \U(m)$	is $\delta$-surjective with $0<\delta<\delta_\eta$, then among the irreducible summands of $\varphi$ there is one, say $\rho:\U(n)\to \U(m_0)$, with $m_0>(1-\eta)m$.
		Moreover, the induced projective representation
		$
		\bar\rho:\PU(n)\to \PU(m_0)
		$
		is $\delta/\sqrt{1-\eta}$-surjective.
	\end{lemma}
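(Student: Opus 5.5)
The plan is to split the claim into a density statement for block-diagonal unitaries and a compression estimate. Write $\varphi\cong\rho_1\oplus\cdots\oplus\rho_s$ as a direct sum of irreducible summands, so that after conjugating by a fixed unitary (harmless, since $\dHSk{m}$ is bi-invariant) the image $\varphi(\U(n))$ lies in the block-diagonal subgroup
\[
H:=\U(m_1)\times\cdots\times\U(m_s)\subseteq \U(m),\qquad m_1+\cdots+m_s=m .
\]
Here each summand, including repeated copies of the same irreducible, gets its own block. If $\varphi$ is $\delta$-surjective, then $H$ is $\delta$-dense in $(\U(m),\dHSk{m})$. So the first part follows once we show: if every $m_i\le (1-\eta)m$, then some $y\in\U(m)$ has $\dHSk{m}(y,H)\ge\sqrt{\eta}$. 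We may then take $\delta_\eta:=\sqrt{\eta}$; any smaller positive constant also works.

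\textbf{Step 1: a balanced split of the blocks.} If every $m_i\le(1-\eta)m$, partition the index set into two groups $P$ and $Q$ with $\sum_{i\in P}m_i\ge\eta m$ and $\sum_{i\in Q}m_i\ge\eta m$.
\begin{itemize}
\item If the largest block has $m_1\ge m/3$, take $P=\{1\}$. Then $P$ has dimension at least $m/3>\eta m$, and the rest has dimension at least $\eta m$.
\item If all blocks are smaller than $m/3$, add blocks greedily to $P$ until its dimension first reaches $m/3$. Then $P$ has dimension in $[m/3,2m/3)$, so both sides have dimension at least $m/3>\eta m$.
\end{itemize}
The hypothesis $\eta<1/3$ is used exactly here.

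\textbf{Step 2: a unitary far from $H$.} Let $p$ be the projection onto the span of the $P$-blocks; it commutes with every element of $H$. Set $k:=\lceil \eta m\rceil\le\min(\operatorname{rk}p,\operatorname{rk}(1-p))$. Choose a unitary $y$ that swaps a $k$-dimensional subspace of $p\bbC^m$ with a $k$-dimensional subspace of $(1-p)\bbC^m$ and is the identity elsewhere. Then $\|(1-p)yp\|_{2,m}^2=k/m\ge\eta$. For $h\in H$ we have $(1-p)hp=0$, and compressions contract the trace norm, so
\[
\|y-h\|_{2,m}\ge\|(1-p)(y-h)p\|_{2,m}=\|(1-p)yp\|_{2,m}\ge\sqrt{\eta}.
\]
This contradicts $\delta$-density of $H$ for $\delta<\sqrt\eta$. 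Hence some summand $\rho=\rho_{i_0}$ has $m_0:=m_{i_0}>(1-\eta)m$.

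\textbf{Step 3: projective surjectivity of $\bar\rho$.} Given $u\in\U(m_0)$, let $y:=u\oplus I_{m-m_0}$, placing $u$ in the block of $\rho$. By $\delta$-surjectivity there is $v\in\U(n)$ with $\|\varphi(v)-y\|_{2,m}\le\delta$. Since $\varphi(v)$ is block diagonal, compressing to the $\rho$-block gives
\[
m_0\,\|\rho(v)-u\|_{2,m_0}^2\le m\,\|\varphi(v)-y\|_{2,m}^2\le m\delta^2,
\]
so $\|\rho(v)-u\|_{2,m_0}\le\delta\sqrt{m/m_0}<\delta/\sqrt{1-\eta}$. The quotient map to $(\PU(m_0),\bar\dmet_{m_0})$ is $1$-Lipschitz, so $\bar\rho$ is $\delta/\sqrt{1-\eta}$-surjective.

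The only real obstacle is the combinatorial split in Step 1, which is what forces the restriction $\eta<1/3$. The remaining steps are routine estimates with the normalized trace norm.
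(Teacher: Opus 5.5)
Your proof is correct and follows the paper's argument. A swap unitary between two complementary $\varphi$-invariant subspaces, each of dimension at least $\eta m$, contradicts $\delta$-surjectivity, and compressing to the large irreducible block gives the projective $\delta/\sqrt{1-\eta}$-surjectivity. Your differences are cosmetic: you use only one off-diagonal block, which yields the bound $\sqrt\eta$ instead of the paper's $2\sqrt\eta$ and is harmless. Your Step 1 also spells out the partition argument, where $\eta<1/3$ is needed, which the paper leaves implicit.
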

	
	\begin{proof}
		If after unitary conjugacy $\varphi=\psi\oplus \chi$ with
		$
		\dim \psi=m_1\ge \eta m,$
		and
		$		\dim \chi=m_2\ge \eta m,
		$
		then $\varphi$ is not $\delta$-surjective for any $\delta<2\sqrt\eta$.
		Indeed, write $\bbC^m=E\oplus F$ accordingly. Choose subspaces $E_0\subseteq E$ and $F_0\subseteq F$ of common dimension $q:=\min(m_1,m_2)\ge \eta m$. 
		Let $u_0\in \U(m)$ act by swapping $E_0$ and $F_0$, and by the identity on the orthogonal complement. For any $g\in {\U}(n)$, the matrix $\varphi(g)$ preserves $E$ and $F$, so we can compute
		$\|u_0-\varphi(g)\|_{\mathrm{HS}}^2\ge 4q$.
		Therefore $ \dHSk{m}( u_0,\varphi(  g))\ge 2\sqrt{\frac{q}{m}}\ge 2\sqrt{\eta}.$
		
		After shrinking $\delta_\eta$ if necessary, we may therefore assume that in every direct sum decomposition $\varphi=\psi\oplus \chi$ one of the two summands has dimension $<\eta m$.
		
		Now, in a decomposition into irreducibles, one irreducible summand has dimension $>(1-\eta)m$.
		Write $\varphi=\rho\oplus \sigma$ with $\rho$ irreducible and $\dim \rho=m_0>(1-\eta)m$. We now show that $\bar\rho$ is $\delta/\sqrt{1-\eta}$-surjective.
		Recall that $\varphi$ is $\delta$-surjective on $\U(m)$. Fix $v\in \U(m_0)$ and consider $x:=v\oplus I_{m-m_0}\in \U(m)$.
		Choose $g\in \U(n)$ such that
		$\dHSk{m}(x,\varphi(g))\le \delta$. Since $\varphi(g)=\rho(g)\oplus \sigma(g)$, we get
		$
		\|v-\rho(g)\|_{\mathrm{HS}}^2
		\le
		\|x-\varphi(g)\|_{\mathrm{HS}}^2
		\le
		m\delta^2.
		$
		Hence
		$
		\bar \dmet_{m_0}(\bar v,\bar\rho(\bar g))
		\le
		\Bigl(\frac{m}{m_0}\Bigr)^{1/2}\delta
		<
		\frac{\delta}{\sqrt{1-\eta}}.
		$
		Thus $\bar\rho$ is $\delta/\sqrt{1-\eta}$-surjective.
	\end{proof}
	
	\begin{proposition}\label{app:prop-coarse-bound}
		There exists $\delta_0>0$ and a constant $A>0$ such that whenever
		$
		\varphi:\U(n)\to \U(m)
		$
		is $\delta$-surjective with $0<\delta<\delta_0$, one has
		$
		m^2 \le A\,n^2\log(2m).
		$
	\end{proposition}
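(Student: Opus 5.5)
We compare metric entropy on both sides, using the three preceding lemmas. Fix $\eta=1/4$ and let $\delta_0\le \delta_\eta$ be small, to be adjusted below. Let $\varphi:\U(n)\to\U(m)$ be $\delta$-surjective with $\delta<\delta_0$.

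\emph{Trivial and degenerate cases.} If $m=1$ the inequality is trivial once $A\ge 1/\log 2$. If $n=1$, then $\varphi(\U(1))$ is a torus of dimension at most $1$ in $\U(m)$. Volume comparison as in Lemma~\ref{app:lem-covering-upper} bounds its covering numbers by a polynomial in $m$, while Lemma~\ref{app:lem-entropy-lower} (applied to $\PU(m)$) gives entropy $\gtrsim m^2-1$. This forces $m$ to be bounded, and we absorb that into $A$.

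\emph{Main case $n\ge 2$, $m\ge 2$.} By Lemma~\ref{app:lem-irreducible-reduction} there is an irreducible summand $\rho:\U(n)\to\U(m_0)$ with $m_0>\tfrac34 m$, and $\bar\rho:\PU(n)\to\PU(m_0)$ is $\delta'$-surjective with $\delta'=\delta/\sqrt{3/4}$. If $m_0=1$ then $m<4/3$, which is excluded. If $\bar\rho$ were trivial, $\PU(m_0)$ would lie within $\delta'$ of a point. This contradicts $\diam(\PU(m_0),\bar d_{m_0})$ being bounded below by an absolute constant when $m_0\ge 2$ (e.g.\ $\bar d$ between $I$ and a diagonal matrix with eigenvalues $\pm1$ in equal proportion is bounded below). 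So $\bar\rho$ is nontrivial and Lemma~\ref{app:lem-covering-upper} applies.

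\emph{Transferring a net and comparing entropies.} Fix $r$ with $0<r<r_0/2$ and take $\delta_0$ so small that $\delta'<r$. An $r$-net $Y$ of $\bar\rho(\PU(n))$ for $\bar d_{m_0}$ becomes a $2r$-net of $\PU(m_0)$: every point is within $\delta'<r$ of $\bar\rho(\PU(n))$, hence within $2r$ of $Y$. Combining Lemma~\ref{app:lem-entropy-lower} at radius $2r<r_0$ with Lemma~\ref{app:lem-covering-upper} gives
\[
c_{2r}(m_0^2-1)\le \log\mathcal N\bigl(\PU(m_0),\bar d_{m_0},2r\bigr)\le \log\mathcal N\bigl(\bar\rho(\PU(n)),\bar d_{m_0},r\bigr)\le (n^2-1)\log\Bigl(\frac{4\pi m_0^2}{r}\Bigr).
\]
With $r$ fixed, the right side is at most $C n^2\log(2m)$ for an absolute $C$, using $m_0\le m$. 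The left side is at least $c' m^2$, since $m_0>\tfrac34 m$ and $m_0\ge 2$ give $m_0^2-1\ge \tfrac34 m_0^2\ge \tfrac{27}{64}m^2$. This yields $m^2\le A n^2\log(2m)$.

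\emph{Main obstacle.} The argument itself is just bookkeeping; the substance is the two entropy lemmas. The only delicate points are the choice of constants, namely that $r$ is chosen once and for all below $r_0/2$ and that $\delta_0$ is smaller than both $\delta_\eta$ and $r\sqrt{3/4}$, and excluding the degenerate cases $m_0=1$, trivial $\bar\rho$, and $n=1$, where Lemma~\ref{app:lem-covering-upper} does not directly apply.
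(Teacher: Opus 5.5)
Your main case is essentially the paper's proof. You use Lemma~\ref{app:lem-irreducible-reduction} with $\eta=1/4$ to get an irreducible summand with $m_0>\frac34 m$ and $\bar\rho$ that is $\delta'$-surjective. You then transfer a net from $\bar\rho(\PU(n))$ to $\PU(m_0)$ and squeeze the entropy between Lemma~\ref{app:lem-entropy-lower} and Lemma~\ref{app:lem-covering-upper}. Your radii $r,2r$ play the role of the paper's $s_0/2,s_0$, and the bookkeeping is correct: $r<r_0/2\le 1/2$, $m_0^2-1\ge\frac{27}{64}m^2$, and $\log(4\pi m_0^2/r)\lesssim\log(2m)$. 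Your check that $\bar\rho$ is nontrivial, which the paper leaves implicit, is also fine. Schur's lemma gives it more directly: an irreducible representation of dimension $m_0\ge 2$ cannot take only scalar values.

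The one step that does not hold up is the separate treatment of $n=1$. The polynomial bound in Lemma~\ref{app:lem-covering-upper} rests on $\alpha\le m^2$, which comes from irreducibility via $a+1\le m$. For $n=1$ the map $\varphi(z)=u\,\mathrm{diag}(z^{k_1},\dots,z^{k_m})u^*$ is a sum of characters with arbitrary weights $k_i$. So the image circle can be arbitrarily long, and volume comparison ``as in Lemma~\ref{app:lem-covering-upper}'' gives no bound in terms of $m$ alone. In fact, for rapidly growing $k_i$ the image is nearly equidistributed in the diagonal torus, and its covering numbers at a fixed radius grow exponentially in $m$, not polynomially. The conclusion could be rescued, since the image lies in a maximal torus whose entropy at a fixed radius is $O(m)=o(m^2)$.

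No separate argument is needed, though. Every irreducible representation of $\U(1)$ is a character, so the summand produced by Lemma~\ref{app:lem-irreducible-reduction} has $m_0=1$. Your own observation that $m_0=1$ forces $m<4/3$ then gives $m=1$. So simply drop the restriction $n\ge 2$ from your main case.
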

	
	\begin{proof}
		If $m=1$, then
		$
		m^2=1\le \frac{1}{\log 2}\,n^2\log(2m).
		$
		Thus, after enlarging $A$ if necessary, the conclusion is automatic in this case. We may therefore assume that $m>1$.
		
		Fix $\eta=1/4$. By Lemma~\ref{app:lem-irreducible-reduction}, after shrinking $\delta_0$ if necessary there is an irreducible summand
		$
		\rho:\U(n)\to \U(m_0)
		$
		with
		$
		m_0>\frac34 m,
		$
		such that $\bar\rho:\PU(n)\to \PU(m_0)$ is $(2/\sqrt3)\delta$-surjective.
		Let $r_0>0$ be given by Lemma~\ref{app:lem-entropy-lower}, and fix
		$
		s_0:=\min\Bigl\{\frac12,\frac{r_0}{2}\Bigr\}.
		$
		Then $0<s_0<r_0$. Shrink $\delta_0$ further so that
		$
		\frac{2\delta_0}{\sqrt3}<\frac{s_0}{2}.
		$
		Now fix $0<\delta<\delta_0$. Since $\bar\rho$ is $(2/\sqrt3)\delta$-surjective, every point of $\PU(m_0)$ lies at $\bar \dmet_{m_0}$-distance less than $s_0/2$ from $\bar\rho(\PU(n))$. Therefore any $s_0/2$-net in $\bar\rho(\PU(n))$ is an $s_0$-net in $\PU(m_0)$. Hence
		\[
		\mathcal N\bigl(\PU(m_0),\bar d_{m_0},s_0\bigr)
		\le
		\mathcal N\bigl(\bar\rho(\PU(n)),\bar \dmet_{m_0},s_0/2\bigr).
		\]
		By Lemma~\ref{app:lem-entropy-lower}, there exists a constant $c_{s_0}>0$ such that
		\[
		\log \mathcal N\bigl(\PU(m_0),\bar \dmet_{m_0},s_0\bigr)
		\ge
		c_{s_0}\bigl(m_0^2-1\bigr).
		\]
		Applying also Lemma~\ref{app:lem-covering-upper} with $r=s_0/2$, we get
		\[
		c_{s_0}\bigl(m_0^2-1\bigr)
		\le
		\log \mathcal N\bigl(\bar\rho(\PU(n)),\bar d_{m_0},s_0/2\bigr)
		\le
		(n^2-1)\log\Bigl(\frac{8 \pi m_0^2}{s_0}\Bigr).
		\]
		Since $m_0>\frac34 m$, the left-hand side is bounded below by a positive constant multiple of $m^2$. Since also $m_0\le m$, the right-hand side is bounded above by a constant multiple of $n^2\log(2m)$. Hence, after enlarging constants, we obtain
		$
		m^2 \le A\,n^2\log(2m),
		$
		as claimed.
	\end{proof}
	
	\begin{proof}[Proof of Theorem~\ref{th_stab}]
		Replacing $\varepsilon$ by $\min\{\varepsilon,1\}$ if necessary, we may assume $0<\varepsilon<2$. Fix such an $\varepsilon$, and set $\eta:= \varepsilon^2/16$. Let $\delta_0$ and $A$ be given by Proposition~\ref{app:prop-coarse-bound}, and let $\delta_\eta$ be given by Lemma~\ref{app:lem-irreducible-reduction}. Choose
		$
		\delta<\min\Bigl\{\delta_0,\, (\sqrt3/2)\delta_\eta\Bigr\}.
		$
		Let $\varphi:{\U}(n)\to {\U}(m)$ be $\delta$-surjective.
		
		If $m=1$, then $\varphi$ is a character, hence $\varphi(v)=\det(v)^k$ for some $k\in \bbZ$.
		Assume now that $m>1$. By Proposition~\ref{app:prop-coarse-bound}, $m^2\le A\,n^2\log(2m)$.
		Hence there exists $N\in \bbN$ such that for every $n\ge N$, the inequality
		$
		m\ge \binom{n}{2}
		$
		is impossible. Assume first that $n\ge N$. Then $m<\binom{n}{2}$. Since $\delta<(\sqrt3/2)\delta_\eta$, Lemma~\ref{app:lem-irreducible-reduction} yields an irreducible summand $\rho:\U(n)\to \U(m_0)$ with $m_0>(1-\eta)m$, such that the induced projective representation $\bar\rho:\PU(n)\to \PU(m_0)$ is $\delta/\sqrt{1-\eta}$-surjective. In particular $m_0>1$; indeed, since $m>1$ and $\eta\le 1/4$, we have
		$
		m_0>(1-\eta)m\ge \frac34\cdot 2>1.
		$
		Since also $m_0\le m<\binom{n}{2}$, Corollary~\ref{app:cor-projective-gap} shows that $m_0=n$ and that, up to unitary conjugacy and multiplication by a determinant character, $\rho$ is either the standard representation or the dual representation.
		
		After conjugating $\varphi$, write $\varphi=\rho\oplus \sigma$.
		Then $n=m_0>(1-\eta)m$, so in particular $n\le m\le \frac{n}{1-\eta}\le (1+\varepsilon)n$.
		Set $q:=m-n$. Then $q=m-n<m\eta$. If $\rho(v)=\det(v)^k\,u_0vu_0^{-1}$, let $u:=u_0\oplus I_q$. Then
		$
		\varphi(v)=\det(v)^k\,\ad_u\bigl(v\oplus \det(v)^{-k}\sigma(v)\bigr),
		$
		and therefore
		\[
		\begin{aligned}
			\dHSk{m}\bigl(\varphi(v),\det(v)^k\,\ad_u(v\oplus I_q)\bigr)
			&= \dHSk{m}\bigl(v\oplus \det(v)^{-k}\sigma(v),v\oplus I_q\bigr) \\
			&= \dHSk{m}\bigl(\det(v)^{-k}\sigma(v),I_q\bigr) \\
			&\le 2\sqrt{\frac{q}{m}}
			<2\sqrt\eta\le \varepsilon.
		\end{aligned}
		\]
		The case with $\overline{v}$ in place of $v$ is identical. This proves the theorem when $n\ge N$.
		
		It remains to treat the finitely many values $1\le n<N$. By
		Proposition~\ref{app:prop-coarse-bound}, there exists $M\in\bbN$ such that
		every $\delta$-surjective homomorphism $\U(n)\to\U(m)$ with $1\le n<N$
		satisfies $m\le M$.
		
		For fixed $1\le n<N$ and $1<m_0\le M$, consider the projective
		homomorphisms
		$
		\tau\colon \PU(n)\to\PU(m_0)
		$
		arising from irreducible continuous representations
		$
		\rho\colon \U(n)\to\U(m_0).
		$
		Only finitely many such projective types occur. Call such a type good if
		$\rho$ is, up to unitary conjugacy and multiplication by a determinant
		character, either the standard representation or the dual representation.
		
		For every bad projective type $\tau$, set
		\[
		\Delta_\tau:=
		\sup_{y\in\PU(m_0)}
		\bar\dmet_{m_0}(y,\tau(\PU(n))).
		\]
		As before, $\Delta_\tau>0$. Since only finitely many bad types occur, the
		minimum $\Delta$ of these numbers is positive. Shrink $\delta$ once more so
		that $\frac{\delta}{\sqrt{1-\eta}}<\Delta.$
		
		Now let $\varphi\colon\U(n)\to\U(m)$ be $\delta$-surjective with
		$1\le n<N$ and $1<m\le M$. By Lemma~\ref{app:lem-irreducible-reduction},
		there is an irreducible summand
		$
		\rho\colon\U(n)\to\U(m_0)
		$
		with $m_0>(1-\eta)m$ such that
		$
		\bar\rho\colon\PU(n)\to\PU(m_0)
		$
		is $\delta/\sqrt{1-\eta}$-surjective. Hence the projective class of $\rho$
		cannot be bad. Therefore $\rho$ is, up to unitary conjugacy and determinant
		twist, either the standard representation or the dual representation. The rest
		of the proof is identical to the case $n\ge N$.
		
		If in addition \(\varphi(zI_n)=zI_m\) for all \(z\in\mathbb T\), then every
		irreducible summand of \(\varphi\) has central character \(z\). For a standard
		summand twisted by \(\det^k\), the central character is \(z^{kn+1}\), hence
		\(kn+1=1\), so \(k=0\). For a dual summand twisted by \(\det^k\), the central
		character is \(z^{kn-1}\), hence \(kn-1=1\). This again gives \(k=0\), except
		for the harmless case $n=2$, $k=1$, where \(\det\otimes \overline{\mathrm{std}}\)
		is equivalent to the standard representation. Thus the approximation may
		be written with \(k=0\).
	\end{proof}
	
	\section{Metric reduced products of unitary groups}\label{sec:reduced-products}
	
	\subsection{Preliminaries and definitions}\label{sec:preliminaries}
	
	Recall that when $(k_n)_n$ is a sequence of natural numbers, we denote by $\UHS[ (k_n)_n]$ the metric reduced product of the sequence of metric groups $(\U(k_n): n \in \bbN)$ in the normalized trace norm.
	
	Given a sequence $(u_n)_n$ with $u_n\in {\U}(k_n)$, we write $[u_n]_n$ for the element of $\UHS[(k_n)_n]$ that it represents. We also consider $\MHS[(k_n)_n]$, the $\| \|_{2,n}$-reduced $\| \|_{\text{op}}$-$\ell_\infty$-product of the $\M_{k_n}(\mathbb C)$. Namely, $\MHS[(k_n)_n]$ has as elements $[u_n]_n$ for $(u_n)_n$ a sequence with $u_n\in \M_{k_n}(\mathbb C)$ that is bounded in operator norm. For two such sequences $(u_n)_n$ and $(v_n)_n$, we set $[u_n]_n=[v_n]_n$ if
	$
	\lim_n \dHSk{k_n}(u_n,v_n)=0,
	$
	and the distance between two elements $[u_n]_n,[v_n]_n\in\MHS[(k_n)_n]$ is given by
	\[
	\dHSinf\left([u_n]_n,[v_n]_n\right)=\limsup_n \dHSk{k_n}(u_n,v_n).
	\]
	Clearly, $\UHS[ (k_n)_n]$ naturally embeds into $\MHS[(k_n)_n]$.
	The coordinatewise algebraic operations and involution descend to
	$\MHS[(k_n)_n]$, so $\MHS[(k_n)_n]$ is a unital $*$-algebra.
	
	If $(k_n)_n$ and $(l_n)_n$ are asymptotically equivalent, that is, $\lim \frac{l_n}{k_n}=1$, then $\UHS[ (k_n)_n]$ and $\UHS[ (l_n)_n]$ are canonically isomorphic. When $k_n\le l_n$ for all $n$, every element $[v_n]_n\in \UHS[ (l_n)_n]$ determines a unique element $[u_n]_n\in\UHS[ (k_n)_n]$ with
	\[
	\dHSinf([u_n]_n,[v_n\upharpoonright \bbC^{k_n}]_n)=0,
	\]
	and we denote it by $[v_n]_n \downarrow \UHS[ (k_n)_n]$. Conversely, every $[u_n]_n\in\UHS[ (k_n)_n]$ admits a unique lift $[v_n]_n\in\UHS[ (l_n)_n]$ such that $[u_n]_n=[v_n]_n\downarrow \UHS[ (k_n)_n]$; we denote it by $[u_n]_n\uparrow \UHS[ (l_n)_n]$. In general, when $(k_n)_n$ and $(l_n)_n$ are asymptotically equivalent, there is a unique isometric group isomorphism
	$
	\rho\colon \UHS[ (k_n)_n]\to\UHS[ (l_n)_n]
	$
	such that for every $[u_n]_n\in\UHS[ (k_n)_n]$ and $[v_n]_n\in\UHS[ (l_n)_n]$:
	\begin{align*}
		\rho([u_n]_n)=[v_n]_n&\Leftrightarrow
		[u_n]_n\downarrow \UHS[ (k_n\wedge l_n)_n]=[v_n]_n\downarrow \UHS[ (k_n\wedge l_n)_n]\\
		&\Leftrightarrow
		[u_n]_n\uparrow \UHS[ (k_n\vee l_n)_n]=[v_n]_n\uparrow \UHS[ (k_n\vee l_n)_n]\\
		&\Leftrightarrow
		\lim_n
		\dHSk{k_n\wedge l_n}
		\bigl(u_n\upharpoonright \bbC^{k_n\wedge l_n},v_n\upharpoonright \bbC^{k_n\wedge l_n}\bigr)=0\\
		&\Leftrightarrow
		\lim_n
		\dHSk{k_n\vee l_n}
		\bigl(u_n\oplus I_{k_n\vee l_n-k_n},v_n\oplus I_{k_n\vee l_n-l_n}\bigr)=0.
	\end{align*}
	
	We denote the image of $[u_n]_n$ under this isomorphism by $[u_n]_n\updownarrow \UHS[ (l_n)_n]$.
	
	\begin{definition}\label{respect convergence}
		A sequence $(h_n)_n$ of maps $h_n\colon \U(k_n)\to \U(l_n)$ is said to \emph{respect $\dHS$-convergence} if for all sequences $(u_n)_n,(v_n)_n$ with $u_n,v_n\in \U(k_n)$ for all~$n$ one has
		\[
		[u_n]_n=[v_n]_n \Rightarrow [h_n(u_n)]_n=[h_n(v_n)]_n.
		\]
	\end{definition}
	
	\begin{definition}
		A map $\varphi\colon\UHS[(k_n)_n]\to\UHS[(l_n)_n]$ is said to be \emph{of product form} if there exists a sequence $(h_n)_n$ of maps $h_n\colon \U(k_n)\to \U(l_n)$ such that for every sequence $(u_n)_n$ with $u_n\in \U(k_n)$ the map $\varphi$ sends $[u_n]_n\in\UHS[(k_n)_n]$ to $[h_n(u_n)]_n\in\UHS[(l_n)_n]$. Note that then the sequence $(h_n)_n$ necessarily respects $\dHS$-convergence.
	\end{definition}
	
	A natural first question arises: if  $\UHS[ (k_n)_n] \cong \UHS[ (l_n)_n],$ must $(k_n)_n$ and $(l_n)_n$ be asymptotically equivalent?    We first answer this question when the isomorphism can be taken to be \emph{of product form}.
	
	Every sequence $(\alpha_n)_n$ with $\alpha_n \in \Aut(\U(k_n))$ respects $\dHS$-convergence (all automorphisms of $\U(k)$ are isometric) and hence induces an automorphism of the group $\UHS[ (k_n)_n]$ which is of product form.
	By the following proposition, all product form automorphisms of the group $\UHS[ (k_n)_n]$ can be obtained in that way.
	
	\begin{proposition} \label{prop_stab}
		Let $(k_n)_n$ and $(l_n)_n$ be two sequences of natural numbers with $\lim_n k_n = \lim_n l_n = \infty.$
		Let $\varphi : \UHS[ (k_n)_n] \to \UHS[ (l_n)_n]$ be an isomorphism of product form. Then $\lim \frac{k_n}{l_n}=1$ and there exists a sequence $(\alpha_n)_n$ with $\alpha_n \in \Aut(\U(l_n))$ such that $\varphi$ maps every $[u_n]_n \in \UHS[(k_n)_n]$ to $[\alpha_n(v_n)]_n \in \UHS[(l_n)_n]$, where $[v_n]_n = [u_n]_n\updownarrow \UHS[ (l_n)_n].$
	\end{proposition}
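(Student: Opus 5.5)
We first upgrade the coordinate maps to genuine continuous homomorphisms, and then classify them with Theorem~\ref{th_stab}. Let $h_n\colon \U(k_n)\to\U(l_n)$ implement $\varphi$.

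\emph{Step 1 (uniform estimates).} We claim that
\[
\varepsilon_n:=\sup_{u,v}\dHSk{l_n}\bigl(h_n(uv),h_n(u)h_n(v)\bigr)\to 0 .
\]
If not, pick witnesses $u_n,v_n$ along an infinite set $S$ of coordinates, and set them to $1$ elsewhere. Then $\varphi([u_nv_n]_n)\neq\varphi([u_n]_n)\varphi([v_n]_n)$, contradicting that $\varphi$ is a homomorphism. The same argument, using that $(h_n)_n$ respects $\dHS$-convergence, gives uniform continuity: for every $\eta>0$ there are $\gamma>0$ and $n_0$ such that for $n\ge n_0$,
\[
\dHSk{k_n}(u,v)<\gamma \;\Rightarrow\; \dHSk{l_n}(h_n(u),h_n(v))<\eta .
\]
Otherwise we could choose pairs with $\dHSk{k_n}(u_n,v_n)\to0$ but images $\eta$ apart. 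Since $\dHSk{k_n}\le2$, this gives $(L_n,\eta_n)$-quasi-Lipschitz bounds with $\eta_n\to0$ and $L_n$ finite; for instance $L_n=2/\gamma_n$ with $\gamma_n\to0$ slowly works.

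Surjectivity of $\varphi$ gives $\delta_n$-surjectivity with $\delta_n\to0$, by the same diagonal-witness argument: a sequence of badly approximated targets $w_n\in\U(l_n)$ would give an element $[w_n]_n$ outside the image.

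\emph{Step 2 (stability).} Apply Corollary~\ref{app:cor-dCOT-sequential} to $h_n$ with $K_n=\U(k_n)$. This yields continuous homomorphisms $\rho_n\colon\U(k_n)\to\U(m_n)$ and isometries $V_n$ with $m_n/l_n\to1$ and
\[
\sup_u\bigl\|h_n(u)-V_n^*\rho_n(u)V_n\bigr\|_{2,l_n}\to0 .
\]
Since $V_n^*\rho_nV_n$ is the compression of $\rho_n$ to a corner of relative dimension tending to $1$, the homomorphism $\rho_n$ is itself $\delta_n'$-surjective onto $\U(m_n)$ with $\delta_n'\to0$. To see this, approximate $w\in\U(m_n)$ on $V_n\bbC^{l_n}$, using unitary dilation of the compression; the corner has small codimension, so it only costs $O(\sqrt{1-l_n/m_n})$. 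Moreover, $\rho_n$ compressed by $V_n$ is a near-homomorphism, which by the standard argument forces $\rho_n$ to almost commute with $P_n=V_nV_n^*$ in $\|\cdot\|_2$. Hence $h_n$ is uniformly close to the genuine homomorphism $\rho_n$ followed by the canonical identification of $\UHS[(m_n)_n]$ with $\UHS[(l_n)_n]$.

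\emph{Step 3 (classification).} Apply Theorem~\ref{th_stab} to $\rho_n$ with $\varepsilon\to0$ along the sequence. Since $m_n\to\infty$, case~(i) is excluded eventually. Case~(ii) gives $k_n\le m_n\le(1+\varepsilon_n)k_n$, so $k_n/l_n\to1$. It also gives
\[
\rho_n(v)\approx\det(v)^{j_n}\,\ad_{u_n}(v\oplus I)\quad\text{or}\quad \rho_n(v)\approx\det(v)^{j_n}\,\ad_{u_n}(\overline v\oplus I).
\]
Composing with $\updownarrow$, $\varphi([u]_n)=[\chi_n(v_n)\,\beta_n(v_n)]_n$, where $\beta_n\in\Aut(\U(l_n))$ is $\ad_{u_n}$ possibly composed with conjugation, $\chi_n=\det^{j_n}$, and $[v_n]_n=[u_n]_n\updownarrow\UHS[(l_n)_n]$.

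\emph{Step 4 (the determinant twist).} This is the main obstacle, since $\det^{j}$ is not continuous in $\dHS$ uniformly in $n$ once $j\ne0$. Our plan is to use Step 1's uniform continuity. Take $v=e^{it}E_{11}+(I-E_{11})$, which has $\dHSk{l_n}(v,1)\to0$ while $\det v=e^{it}$; then $\det(v)^{j_n}$ stays far from $1$ for suitable $t$ unless $j_n=0$. So $j_n=0$ eventually, or at least the twist is trivial in the reduced product. This leaves $\alpha_n:=\beta_n\in\Aut(\U(l_n))$.

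We expect the main technical difficulty to lie in Step 2, namely in transferring surjectivity and the homomorphism property from the compression $V_n^*\rho_nV_n$ back to $\rho_n$ with control, so that Theorem~\ref{th_stab} applies.
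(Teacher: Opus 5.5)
Your Steps 1--3 follow the paper's route essentially verbatim: Lemma~\ref{lem:product-form-asymptotic}, then Corollary~\ref{app:cor-dCOT-sequential}, transfer of almost surjectivity to $\rho_n$ through the small-codimension corner, and Theorem~\ref{th_stab}, as in Lemma~\ref{lem:product-form-stabilization}. The almost commutation with $P_n=V_nV_n^*$ that you flag as the main difficulty is automatic, because $1-P_n$ has normalized rank $(m_n-l_n)/m_n\to0$; the paper just writes the block estimate $\|\rho_n(u)-\widetilde{h_n(u)}\|_{2,m_n}\le\eta_n+2\sqrt{(m_n-l_n)/m_n}$.

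The genuine difference is Step~4. The paper (Lemma~\ref{lem:product-form-det-twist}) uses injectivity of $\varphi$ on the center. On scalars the approximant acts by $z\mapsto z^{N_n}$ with $N_n=a_nk_n\pm1$. If $|N_n|>1$ infinitely often, $N_n$-th roots of unity far from $1$ give a nontrivial central element in the kernel. Hence $|N_n|=1$ eventually, and $k_n\to\infty$ then forces $a_n=0$.

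You use only that $\varphi$ is well defined on $\dHS$-classes. The unitary $x_n=e^{i\pi/j_n}E_{11}+(I-E_{11})\in\U(k_n)$ satisfies $\dHSk{k_n}(x_n,I)\le 2/\sqrt{k_n}$, so $[x_n]_n=1$. Yet wherever $j_n\neq0$ the approximant sends $x_n$ to within $o(1)$ of $-I$, contradicting $\varphi(1)=1$. This is correct and slightly more direct. Since it never uses injectivity, it excludes determinant twists for any product-form homomorphism admitting such coordinate approximations. The paper's argument, by contrast, only needs to know $\varphi$ on the center.

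When writing Step~4 up, fix three small points:
\begin{enumerate}
\item Let the angle depend on $n$, as above. A single fixed ``suitable $t$'' would need an extra argument.
\item Evaluate the twist as $\det(x)^{j_n}$ on the original coordinate $x\in\U(k_n)$, not as $\chi_n(v_n)$ with $[v_n]_n=[x_n]_n\updownarrow\UHS[(l_n)_n]$. The representative $v_n$ is only determined up to $\dHS$-null perturbations and $\det$ is not $\dHS$-continuous, which is exactly what Step~4 exploits.
\item Your $\beta_n$ is a priori $\ad_{u_n}$ on $\U(m_n)$, with $u_n$ the conjugating unitary from Theorem~\ref{th_stab}, compressed by $V_n$. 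It becomes an element of $\Aut(\U(l_n))$ only after noting that the isometry $u_n^*V_n$ is $\|\cdot\|_2$-close to the standard inclusion composed with a unitary of $\bbC^{l_n}$. The paper also leaves this step implicit.
\end{enumerate}
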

	
	Subsection~\ref{subsec: prop_stab} is devoted to the proof of Proposition~\ref{prop_stab}.
	
	The next result drops the product-form assumption and uses a different mechanism: the definability of the absolute normalized trace on involutions together with uniform bounded normal generation modulo the center. It expresses that the reduced product groups $\UHS[(k_n)_n]$ \emph{recognise coordinates}.
	
	\begin{theorem}\label{th.action boundary}
		Every isomorphism $\varphi: \UHS[(k_n)_n] \to \UHS[(l_n)_n]$ induces an automorphism $\theta$ of the Stone--\v{C}ech boundary $\partial \beta \bbN,$ which moreover has the following two properties:
		\begin{enumerate}
			\item\label{itm1} for all $a,b \in \UHS[(k_n)_n]$ and any infinite subset $S \subseteq \bbN,$ we have \[a =_S b \Leftrightarrow \varphi(a) =_{\theta(S)} \varphi(b),\]
			\item\label{itm2} for every nonprincipal ultrafilter $\cU \in \partial \beta \bbN$, the isomorphism $\varphi$ induces an isomorphism $\varphi_\cU$ between the metric ultraproduct groups $\prod_n  {\U}(k_n) / \cU$ and\linebreak $\prod_n  {\U}(l_n) / \theta^{-1}(\cU)$.
		\end{enumerate}
	\end{theorem}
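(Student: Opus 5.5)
We will recover the Boolean algebra $\pow(\bbN)/\Fin$ purely group-theoretically inside $G:=\UHS[(k_n)_n]$, so that any abstract isomorphism $\varphi$ must transport it; $\theta$ is then the induced automorphism, read as a homeomorphism of $\partial\beta\bbN$ by Stone duality.

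First we identify the center. It should be $Z(G)=\{[z_n I_{k_n}]_n : z_n\in\mathbb T\}$. Given a non-central $a=[a_n]_n$, there is an infinite $S$ and $\varepsilon>0$ with $\bar\dmet_{k_n}(a_n,\mathbb T)\ge\varepsilon$ for $n\in S$. For such $n$ we choose unitaries $b_n$ with $\|[a_n,b_n]-1\|_{2,k_n}$ bounded below.

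Next we encode coordinate sets. For $S\subseteq\bbN$, let $e_S\in G$ be the element which is $-1$ on coordinates in $S$ and $1$ elsewhere; it is a central involution. Central involutions $[z_nI]_n$ with $z_n=\pm1$ modulo $\Fin$ correspond exactly to $\pow(\bbN)/\Fin$. However, the center contains many other elements, and an abstract isomorphism need not preserve the specific element $-1$ coordinatewise. So instead we encode $S$ via the \emph{support}: let
\[ N_S:=\{a\in G: a=_{\bbN\setminus S}1\}. \]
These are normal subgroups, and we aim to characterize them abstractly. Uniform bounded normal generation modulo the center (Lemma~\ref{th.boundedgeneration}) gives a constant $C$ such that in each $\U(k)$, any element at projective distance at least $\varepsilon$ from $\mathbb T$ generates $\PU(k)$ in $C(\varepsilon)$ conjugate products. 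Passing to reduced products, the normal subgroup generated modulo $Z(G)$ by a single $a$ is $N_{S}Z(G)$, where $S$ is the set where $a$ stays away from the center. We set it up as a limit over $\varepsilon$, taking unions of the $N_{S_\varepsilon}$. Thus the lattice $\{N_SZ(G)\}$ consists of the "principal" normal subgroups containing $Z(G)$. It is definable from the group structure, hence preserved by $\varphi$, and $S\mapsto N_SZ(G)$ is a Boolean-algebra isomorphism from $\pow(\bbN)/\Fin$. This gives $\theta$ with $\varphi(N_SZ)=N_{\theta(S)}Z$.

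To get property (\ref{itm1}) we must remove the center. Here Lemma~\ref{flower} is used: for involutions $u$, the quantity $\lim_{n}|\tau_{k_n}(u_n)|$ along $S$ is definable. Concretely, $u=_S \pm1$ modulo scalars is detected by commutation or normal generation within $N_S$, while "$a=_S b$" is equivalent to $a^{-1}b\in N_{\bbN\setminus S}$. Centrality modulo $S$ follows from the normal-generation description above. The remaining task is to show that a central element $[z_nI]_n$ equal to $1$ on $\bbN\setminus S$ is sent into $N_{\theta(\bbN\setminus S)}$, and not merely into it times the center. We handle this by expressing scalars $zI$ as products of boundedly many commutators, which holds in $\U(k)$ only when $z$ is nearly a $k$-th root of unity. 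The trace-definability for involutions distinguishes this instead: $a=_S b$ iff $\dHS$ restricted to $S$ vanishes, and the distance to $1$ on $S$ of an involution is a function of its normalized trace, which Lemma~\ref{flower} makes definable relative to $N_S$. Every element of $N_S$ is a bounded product of involutions, uniformly, so we can bootstrap from involutions to all elements.

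Property (\ref{itm2}) then follows formally. For an ultrafilter $\cU$, the union $\bigcup_{S\in\cU}N_{\bbN\setminus S}$ is exactly the kernel of $G\to\prod_n\U(k_n)/\cU$. This kernel is mapped by $\varphi$ onto the corresponding kernel for $\theta^{-1}(\cU)$ (with the paper's convention for $\theta$ acting on points versus sets), so $\varphi$ descends to the ultraproducts.

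The main obstacle is the centre-handling step, namely upgrading "modulo $Z(G)$" statements to exact $=_S$ statements via the trace definability on involutions.
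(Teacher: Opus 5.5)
There are two genuine gaps, one in the construction of $\theta$ and one in the centre-removal step.

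\textbf{The construction of $\theta$.} It rests on the claim that the normal closure of a single non-central $a$, taken together with $Z(G)$, is some $N_SZ(G)$, so that $\{N_SZ(G)\}$ is the definable family of ``principal'' normal subgroups over the centre. This is false. Partition $\bbN$ into infinite sets $B_m$, and take $a$ whose $n$-th coordinate has normalized Hilbert--Schmidt distance exactly $1/m$ from the scalars $\mathbb T I_{k_n}$ for $n\in B_m$. If $\langle\langle a\rangle\rangle Z(G)=N_SZ(G)$, then $a$ is central off $S$, which forces $B_m\subseteq^* S$ for all $m$. But a product of $N$ conjugates of $a^{\pm1}$ stays within distance $N/m+o(1)$ of the scalars on $B_m$. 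So an element of $N_S$ at distance $\ge 1$ from the scalars on all of $S$ is not in $\langle\langle a\rangle\rangle Z(G)$ (look at $B_{2N}$). The increasing unions over $\varepsilon$ you mention are not of the form $N_SZ(G)$ in this example either. So definability of $\{N_SZ(G)\}$, and with it $\theta$, is not established.

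The paper gets $\theta$ from a different source. By Lemma~\ref{flower}, $\antr$ on conjugacy classes of involutions, together with $\le$ and $\odot$, is definable in the group. Hence $\varphi$ induces an automorphism of the MV-algebra $[0,1]^\bbN/\Fin$, which by Cignoli's result comes from some $\theta\in\Aut(\partial\beta\bbN)$. Only afterwards does the paper use Lemma~\ref{th.boundedgeneration}, combined with the fact that $|\ntr|$ detects being $\pm1$ on a set, to show that $u=_S1$ implies $\varphi(u)$ is central on $\theta(S)$. If you want to keep your normal-subgroup route, one can check with bounded normal generation that the $N_SZ(G)$ are exactly the normal subgroups $M\supseteq Z(G)$ with $M\cdot C_G(M)=G$. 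That characterization needs its own proof, though.

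\textbf{The centre-removal step.} You flag this step yourself, but it is not carried out, and the tool you propose cannot do it. Lemma~\ref{flower} only gives $|\ntr|$, and $|\ntr(u)|=|\ntr(e_Tu)|$ for every central involution $e_T$. So it cannot decide whether $\varphi(u)$ is $+1$ or merely $\pm1$ off $\theta(S)$. The signed trace you implicitly use when speaking of ``the distance to $1$'' is exactly what is not available.

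The missing idea is perfectness, which you dismissed too quickly. Each $u\in\U(k)$ is within $2/\sqrt k$ of $\SU(k)$, where every element is a commutator. Hence every element of $N_S\cong\UHS[(k_n)_{n\in S}]$, central ones included, is a commutator of elements of $N_S$. Once $\varphi(N_S)\subseteq N_{\theta(S)}Z$ is known (with $Z$ the centre of the target), you get $\varphi(N_S)=[\varphi(N_S),\varphi(N_S)]\subseteq[N_{\theta(S)}Z,N_{\theta(S)}Z]=[N_{\theta(S)},N_{\theta(S)}]\subseteq N_{\theta(S)}$. The paper expresses the same point as the triviality of a homomorphism from a perfect reduced product into an abelian reduced product of centres.

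\textbf{A smaller point on (2).} $\bigcup_{S\in\cU}N_{\bbN\setminus S}$ is strictly smaller than the kernel of $G\to\prod_n\U(k_n)/\cU$ when $\cU$ is not a P-point. The kernel is $\{a:\forall S\in\cU\ \exists\, T\subseteq S \text{ infinite with } a=_T1\}$. This description is still transported by (1), so your conclusion survives.
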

	
	We can now apply Lemma~\ref{lem.rigidGeneral} to derive from    Proposition~\ref{prop_stab} and Theorem~\ref{th.action boundary} the following main rigidity theorem for isomorphisms between the groups $\UHS[(k_n)_n]$.
	
	\begin{theorem}\label{th.rigunitary}\
		Assuming $\mathrm{OCA}+\mathrm{MA}_{\aleph_1}(\sigma\text{-linked})$, every group isomorphism
		\[    \varphi:\UHS[(k_n)_n]\to\UHS[(l_n)_n]\]
		is trivial, i.e.\ there exists an almost permutation $f : \bbN \to \bbN$ such that  $\lim \frac{l_n}{k_{f(n)}}=1$ and there exists a sequence $(\alpha_n)_n$ with $\alpha_n \in \Aut(\U(l_n))$ such that $\varphi$ maps every $[u_n]_n \in \UHS[(k_n)_n]$ to $[\alpha_n(v_n)]_n \in \UHS[(l_n)_n]$, where $[v_n]_n = [u_{f(n)}]_n\updownarrow \UHS[ (l_n)_n].$
		
	\end{theorem}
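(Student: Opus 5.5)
The plan is to apply Lemma~\ref{lem.rigidGeneral} to the family $\cG=\{(\U(k),\dHSk{k}):k\in\bbN\}$. Each $\U(k)$ is a compact, hence separable, bi-invariant metric group, and its $\dHSk{k}$-diameter is at most $2$, so the families are uniformly bounded. For the equivalence relation $\asymp$ on $\seqtoinf$ I take asymptotic equality, $k_n/l_n\to 1$. For the canonical isomorphisms I take
\[
\Gamma[(k_n)_n,(l_n)_n]([u_n]_n):=[u_n]_n\updownarrow \UHS[(l_n)_n].
\]
This map is of product form: it is implemented coordinatewise by cutting down to $\bbC^{k_n\wedge l_n}$ and padding with an identity block, corrected by any unitary at distance $O(|k_n-l_n|/k_n)^{1/2}\to 0$. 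The preliminaries of Section~\ref{sec:preliminaries} show that it is a well-defined isometric group isomorphism.

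Next I verify the two hypotheses. (CR) follows from Theorem~\ref{th.action boundary}(\ref{itm1}). A homeomorphism of $\partial\beta\bbN$ is the same as an automorphism $\theta$ of $\pow(\bbN)/\Fin$ by Stone duality. The relation $a=_S b$ depends only on the class of $S$ modulo finite sets, and for finite $S$ it is trivially true on both sides. So the equivalence $a=_S b\iff\varphi(a)=_{\theta(S)}\varphi(b)$ is exactly (CR). (PF) is precisely Proposition~\ref{prop_stab}: a product-form isomorphism $\psi$ forces $k_n/l_n\to1$, that is $(k_n)_n\asymp(l_n)_n$, and it has the form $\psi=\varphi[(\alpha_n)_n]\circ\Gamma[(k_n)_n,(l_n)_n]$ with $\alpha_n\in\Aut(\U(l_n))$.

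With both hypotheses in place, Lemma~\ref{lem.rigidGeneral} gives, under $\mathrm{OCA}+\mathrm{MA}_{\aleph_1}(\sigma\text{-linked})$, an almost permutation $f$ such that $(k_{f(n)})_n\asymp(l_n)_n$, that is $l_n/k_{f(n)}\to1$, together with automorphisms $\alpha_n$ satisfying
\[
\varphi=\varphi[(\alpha_n)_n]\circ\Gamma[(k_{f(n)})_n,(l_n)_n]\circ\psi_f .
\]
Unwinding this, $[u_n]_n$ goes to $[u_{f(n)}]_n$, then to $[v_n]_n=[u_{f(n)}]_n\updownarrow\UHS[(l_n)_n]$, and finally to $[\alpha_n(v_n)]_n$, which is the asserted formula.

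The main point requiring care is the bookkeeping between the setting of Notation~\ref{not.not1} and the present one. Here $\Fin$ is the subgroup of sequences with $\dHSk{k_n}(a_n,1)\to0$, so $\cG[(k_n)_n]=\UHS[(k_n)_n]$. One also has to make sure that $\psi_f$ is well defined when $f$ is only an almost permutation, where finitely many coordinates can be defined arbitrarily. Beyond this, all of the substantive content sits in Proposition~\ref{prop_stab} and Theorem~\ref{th.action boundary}, which are taken as given.
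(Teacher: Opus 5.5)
Your proposal is correct and is the paper's argument: the paper obtains Theorem~\ref{th.rigunitary} by applying Lemma~\ref{lem.rigidGeneral}, with $\asymp$ taken to be asymptotic equality and $\Gamma$ the canonical identification $\updownarrow$, where Theorem~\ref{th.action boundary}(\ref{itm1}) supplies (CR) and Proposition~\ref{prop_stab} supplies (PF). Your additional checks are the bookkeeping the paper leaves implicit: separability and uniformly bounded diameter of the $\U(k)$, the product form of $\Gamma$, and the Stone duality passage between $\Aut(\partial\beta\bbN)$ and $\Aut(\pow(\bbN)/\Fin)$.
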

	
	\subsection{Proof of Proposition \ref{prop_stab}}
	\label{subsec: prop_stab}
	
	\begin{lemma}\label{lem:product-form-asymptotic}
		Let $\varphi:\UHS[(k_n)_n]\to\UHS[(l_n)_n]$ be a product-form isomorphism,
		implemented by a sequence $(h_n)_n$ with $h_n:\U(k_n)\to \U(l_n)$. There are sequences $(L_n)_n$ and $(\delta_n)_n$ with $\delta_n \searrow 0$ such that for every $n$ the map $h_n$ is a
		$\delta_n$-surjective $\delta_n$-homomorphism and is $(L_n,\delta_n)$-quasi-Lipschitz.
	\end{lemma}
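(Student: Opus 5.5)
The plan is to argue by contradiction in each of the three properties separately, using a standard compactness/diagonal argument: a failure of uniformity along infinitely many coordinates produces sequences of elements whose classes in the reduced product violate either the homomorphism property, surjectivity, or well-definedness of $\varphi$. For each $n$ define
\[
\delta^{\mathrm{hom}}_n:=\sup_{u,v\in\U(k_n)}\dHSk{l_n}\bigl(h_n(uv),h_n(u)h_n(v)\bigr),\qquad
\delta^{\mathrm{sur}}_n:=\sup_{w\in\U(l_n)}\inf_{u\in\U(k_n)}\dHSk{l_n}\bigl(h_n(u),w\bigr).
\]
First I show $\delta^{\mathrm{hom}}_n\to0$. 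Otherwise, there are $c>0$, an infinite set $S$, and $u_n,v_n$ ($n\in S$) with defect $\ge c$. Extend these arbitrarily to all $n$. Then $\varphi([u_nv_n]_n)=[h_n(u_nv_n)]_n$ differs from $\varphi([u_n]_n)\varphi([v_n]_n)=[h_n(u_n)h_n(v_n)]_n$ because the limsup distance is at least $c$. This contradicts multiplicativity of $\varphi$. The same argument works for surjectivity: if $\delta^{\mathrm{sur}}_n\not\to0$, pick $w_n$ witnessing $\ge c$ on $S$. Surjectivity of $\varphi$ gives $[u_n]_n$ with $[h_n(u_n)]_n=[w_n]_n$. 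Hence $\dHSk{l_n}(h_n(u_n),w_n)\to0$, contradicting the choice on $S$. Note that by definition of product form, $\varphi$ applied to an arbitrary representative sequence is computed coordinatewise, which is exactly what both contradictions use.

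For the quasi-Lipschitz property I would use that $(h_n)_n$ respects $\dHS$-convergence. Define the modulus
\[
\omega_n(t):=\sup\{\dHSk{l_n}(h_n(u),h_n(v)) : \dHSk{k_n}(u,v)\le t\}.
\]
Then respecting convergence should give: for every $\eta>0$ there are $t>0$ and $N$ with $\omega_n(t)\le\eta$ for $n\ge N$. If not, there exist $\eta>0$ and, for each $j$, infinitely many $n$ with $\omega_n(1/j)>\eta$. Diagonalizing, I get a strictly increasing sequence $n_j$ and pairs $u_{n_j},v_{n_j}$ at distance $\le1/j$ whose images are at distance $>\eta$. Setting $u_n=v_n=1$ elsewhere gives $[u_n]_n=[v_n]_n$ but $[h_n(u_n)]_n\ne[h_n(v_n)]_n$, a contradiction.

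So there is a sequence $t_n\searrow0$ with $\omega_n(t_n)=:\gamma_n\to0$; concretely, choose thresholds $N_j$ increasing and put $t_n=t^{(j)}$ for $N_j\le n<N_{j+1}$. Then for $\dHSk{k_n}(u,v)\le t_n$ we have $\dHSk{l_n}(h_n(u),h_n(v))\le\gamma_n$. For $\dHSk{k_n}(u,v)>t_n$ we use the trivial bound $\dHSk{l_n}\le2$, giving $\dHSk{l_n}(h_n(u),h_n(v))\le (2/t_n)\,\dHSk{k_n}(u,v)$. So $L_n:=2/t_n$ works with error $\gamma_n$. Finally set
\[
\delta_n:=\sup_{m\ge n}\max\bigl(\delta^{\mathrm{hom}}_m,\delta^{\mathrm{sur}}_m,\gamma_m\bigr),
\]
which decreases to $0$. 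For the finitely many initial indices where the thresholds are not yet active, one can take $\delta_n$ large, e.g.\ $2$, which makes everything trivially true; monotonicity is preserved by the sup.

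The main (mild) obstacle is the uniformity step for the modulus of continuity. Respecting convergence is a statement about sequences, and converting it into a uniform-in-$n$ modulus requires the diagonal construction above with care about choosing distinct indices $n_j$. Everything else is immediate from the definitions.
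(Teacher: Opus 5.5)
Your proposal is correct and follows essentially the same route as the paper. Both arguments get the homomorphism and surjectivity defects by contradiction along an infinite set of coordinates, and both extract a uniform modulus of continuity from the fact that $(h_n)_n$ respects $\dHS$-convergence, combining it with the diameter bound $2$ to obtain $L_n=2/t_n$. You are in fact more explicit than the paper about the diagonalization and the assembly of $\delta_n\searrow 0$.

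One cosmetic point: $h_n$ need not have closed image, so the infimum in $\delta^{\mathrm{sur}}_n$ may not be attained. To get genuine $\delta_n$-surjectivity in the paper's sense, enlarge $\delta_n$ slightly, e.g.\ by adding $1/n$.
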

	
	\begin{proof}
		For every $\delta>0$, all but finitely many $h_n$ are
		$\delta$-homomorphisms. Otherwise there are $\delta>0$, an infinite set
		$I\subseteq \bbN$, and elements $u_n,v_n\in \U(k_n)$ for $n\in I$ such that
		\[
		\dHSk{l_n}(h_n(u_nv_n),h_n(u_n)h_n(v_n))>\delta
		\qquad (n\in I).
		\]
		Define sequences $(a_n)_n,(b_n)_n$ by $a_n=u_n$, $b_n=v_n$ on $I$ and
		$a_n=b_n=1$ outside $I$. Since $\varphi$ is a homomorphism,
		\[
		[h_n(a_nb_n)]_n=\varphi([a_nb_n]_n)=\varphi([a_n]_n)\varphi([b_n]_n)
		=[h_n(a_n)h_n(b_n)]_n,
		\]
		contradicting the choice of $I$.
		
		Likewise, for every $\delta>0$, all but finitely many $h_n$ are
		$\delta$-surjective. Otherwise there are $\delta>0$, an infinite set
		$I\subseteq \bbN$, and elements $w_n\in \U(l_n)$ for $n\in I$ such that
		$
		\dist(w_n,h_n(\U(k_n)))>\delta.
		$
		Define $(b_n)_n$ by $b_n=w_n$ on $I$ and $b_n=1$ outside $I$. Since
		$\varphi$ is surjective, there exists $[u_n]_n\in \UHS[(k_n)_n]$ with
		$\varphi([u_n]_n)=[b_n]_n$, that is,
		$
		[h_n(u_n)]_n=[b_n]_n,
		$
		again a contradiction.
		
		Finally, fix $\delta>0$. Since the sequence $(h_n)_n$ respects
		$\dHS$-convergence, there are $r(\delta)>0$ and $N(\delta)\in\bbN$ such
		that whenever $n\ge N(\delta)$ and $u,v\in \U(k_n)$ satisfy
		$\dHSk{k_n}(u,v)<r(\delta)$, one has
		$
		\dHSk{l_n}(h_n(u),h_n(v))<\delta.
		$
		Indeed, otherwise one could find an infinite set $I\subseteq \bbN$ and
		pairs $u_n,v_n\in \U(k_n)$ with $\dHSk{k_n}(u_n,v_n)\to 0$ but
		$\dHSk{l_n}(h_n(u_n),h_n(v_n))\ge \delta$ for all $n\in I$, which would
		violate Definition~\ref{respect convergence}. Since the diameter of every
		unitary group in normalized Hilbert--Schmidt distance is at most $2$, it
		follows that for $n\ge N(\delta)$ the map $h_n$ is
		$(2/r(\delta),\delta)$-quasi-Lipschitz.
		
		Thus, we get sequences $(L_n)_n$ and $(\delta_n)_n$ with $\delta_n \to 0$, such that $h_n$ is a $(L_n,\delta_n)$-quasi-Lipschitz $\delta_n$-homomorphism and $\delta_n$-surjective.
	\end{proof}
	
	\begin{lemma}\label{lem:product-form-stabilization}
		Let $\varphi:\UHS[(k_n)_n]\to\UHS[(l_n)_n]$ be a product-form isomorphism,
		implemented by a sequence $(h_n)_n$ satisfying Lemma~\ref{lem:product-form-asymptotic}.
		Then, one has
		$
		\lim_n \frac{k_n}{l_n}=1.
		$
		
		Moreover, there exist integers $a_n\in\bbZ$ and unitaries
		$w_n\in \U(m_n)$ such that the same reduced-product map as $\varphi$ is
		described coordinatewise either by
		\[
		u\mapsto \det(u)^{a_n}\,\Ad_{w_n}(u\oplus I_{m_n-k_n})
		\]
		or by
		\[
		u\mapsto \det(u)^{a_n}\,\Ad_{w_n}(\overline{u}\oplus I_{m_n-k_n}).
		\]
	\end{lemma}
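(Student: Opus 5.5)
Lemma~\ref{lem:product-form-asymptotic} makes $h_n$ an $(L_n,\delta_n)$-quasi-Lipschitz $\delta_n$-representation that is $\delta_n$-surjective, with $\delta_n\to 0$. The plan is to first replace each $h_n$ by an honest continuous homomorphism, and then to classify those homomorphisms via Theorem~\ref{th_stab}.

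\textbf{Step 1: replace $h_n$ by a homomorphism.} We apply Corollary~\ref{app:cor-dCOT-sequential} with $K_n=\U(k_n)$, $N_n=l_n$ and $\varepsilon_n=\delta_n$. After discarding finitely many indices, this gives $m_n\ge l_n$ with $m_n/l_n\to1$, continuous homomorphisms $\rho_n:\U(k_n)\to\U(m_n)$, and isometries $V_n:\bbC^{l_n}\to\bbC^{m_n}$ such that
\[
\sup_u\|h_n(u)-V_n^*\rho_n(u)V_n\|_{2,l_n}\to0 .
\]
Since $m_n/l_n\to1$, conjugating so that $V_n$ is the inclusion of the first $l_n$ coordinates shows that $\rho_n$ and $h_n\oplus I_{m_n-l_n}$ agree up to $o(1)$ in $\dHSk{m_n}$. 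This uses that the compression $V_n^*\rho_n(u)V_n$ is then nearly unitary, and hence nearly commutes with the projection.

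\textbf{Step 2: surjectivity and classification.} The $\delta_n$-surjectivity of $h_n$ then gives $\delta_n'$-surjectivity of $\rho_n$ with $\delta_n'\to0$. Given a target $x\in\U(m_n)$, compress it to the top $l_n\times l_n$ corner, approximate that corner by a unitary in $\U(l_n)$, and note that the lost corner has relative size $(m_n-l_n)/m_n\to0$. Now apply Theorem~\ref{th_stab} with $\varepsilon\to0$ along a diagonal sequence. Case (i) cannot occur because $m_n\ge l_n\to\infty$. Therefore
\[
k_n\le m_n\le(1+\varepsilon_n)k_n ,
\]
and $\rho_n$ is within $\varepsilon_n$ of $u\mapsto\det(u)^{a_n}\Ad_{w_n}(u\oplus I_{m_n-k_n})$ or of the corresponding map with $\overline u$.

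\textbf{Step 3: the ratio $k_n/l_n$.} We have $m_n/k_n\to1$ and $m_n/l_n\to1$, so $k_n/l_n\to1$. Since $h_n\oplus I$ and the explicit formula agree up to $o(1)$ uniformly, and the reduced product identifies $\UHS[(l_n)_n]$ with $\UHS[(m_n)_n]$ via $\updownarrow$, the explicit formula describes the same reduced-product map as $\varphi$.

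\textbf{Main obstacle.} A uniform choice between the standard and dual alternatives is not needed, since the statement allows a choice per coordinate. The delicate part is therefore Step 1's transfer between dimensions $l_n$ and $m_n$, and in particular verifying that almost-surjectivity is preserved there. I would handle this with the Hilbert--Schmidt estimate that a contraction close in $\|\cdot\|_2$ to a unitary on a subspace of codimension $o(m_n)$ extends to a unitary at distance $o(1)$.
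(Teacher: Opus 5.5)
Your proposal is correct and follows essentially the same route as the paper. You apply Corollary~\ref{app:cor-dCOT-sequential} to get $\rho_n\colon \U(k_n)\to\U(m_n)$, transfer closeness and almost-surjectivity between dimensions $l_n$ and $m_n$ by compressing to the corner and taking the polar unitary, and then invoke Theorem~\ref{th_stab}; the only difference is your near-commutation justification for $\rho_n\approx h_n\oplus I$, which is valid but unnecessary, since the off-diagonal and lower-right blocks are automatically $O(\sqrt{(m_n-l_n)/m_n})$ in normalized Hilbert--Schmidt norm.
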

	
	\begin{proof}
		Apply Corollary~\ref{app:cor-dCOT-sequential} to the compact groups
		$
		K_n=\U(k_n)
		$
		and the maps\linebreak $h_n\colon \U(k_n)\to \U(l_n)$. Then, after discarding finitely many indices, there exist integers
		$
		m_n \geq l_n$ with $m_n/l_n\to 1$ and
		continuous homomorphisms
		$
		\rho_n\colon \U(k_n)\to \U(m_n),
		$
		and isometries $W_n\colon \bbC^{l_n}\to \bbC^{m_n}$ such that
		$
		\eta_n:=
		\sup_{u\in\U(k_n)}
		\|h_n(u)-W_n^*\rho_n(u)W_n\|_{2,l_n}
		\to0.
		$

		Let $P_n:=W_nW_n^*$.
		For $v\in {\U}(l_n)$ define
		$
		\widetilde v:=W_n vW_n^*+(1-P_n)\in \U(m_n).
		$
		Then for every $u\in \U(k_n)$,
		\[
		\|\rho_n(u)-\widetilde{h_n(u)}\|_{2,m_n}
		\le \eta_n+2\sqrt{\frac{m_n-l_n}{m_n}}.
		\]
		Conversely, every $y\in \U(m_n)$ is within distance
		$4\sqrt{(m_n-l_n)/m_n}$ of some $\widetilde v$ with $v\in \U(l_n)$: write
		$y$ in block form relative to $P_n$, take the polar unitary of the
		compression $W_n^*yW_n$, and note that the off-diagonal blocks have
		Hilbert--Schmidt norm at most $\sqrt{m_n-l_n}$. Since $h_n$ is
		$\delta_n$-surjective for some sequence $\delta_n \to 0$, it follows that also $\rho_n$ is $\delta'_n$-surjective for
		some sequence $\delta'_n\to 0$.
		
		Fix $\varepsilon>0$. For all sufficiently large $n$, Theorem~\ref{th_stab}
		applied to $\rho_n$ with tolerance $\varepsilon$ gives
		$
		k_n\le m_n\le (1+\varepsilon)k_n.
		$
		The alternative $m_n=1$ is impossible because $m_n/l_n\to 1$ and
		$l_n\to\infty$. Since also $m_n/l_n\to 1$, we conclude that
		$
		\lim_n \frac{k_n}{l_n}=1.
		$
		
		After discarding finitely many indices, Theorem~\ref{th_stab} yields
		integers $a_n\in\bbZ$ and unitaries $w_n\in \U(m_n)$ such that $\rho_n$ is
		$o(1)$-close either to
		\[
		u\mapsto \det(u)^{a_n}\,\Ad_{w_n}(u\oplus I_{m_n-k_n})
		\]
		or to
		\[
		u\mapsto \det(u)^{a_n}\,\Ad_{w_n}(\overline{u}\oplus I_{m_n-k_n}).
		\]
		Because $\eta_n\to0$ and $m_n/l_n\to1$, these formulas describe the same
		reduced-product map as $h_n$ after identifying
		$[u_n]_n\mapsto [u_n]_n\updownarrow \UHS[(l_n)_n]$.
	\end{proof}
	
	\begin{lemma}\label{lem:product-form-det-twist}
		In the situation of Lemma~\ref{lem:product-form-stabilization}, the
		determinant twists vanish eventually. Equivalently, one has $a_n=0$ for all
		sufficiently large $n$.
	\end{lemma}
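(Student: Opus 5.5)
The plan is to use injectivity of $\varphi$ on central elements. By Lemma~\ref{lem:product-form-stabilization}, up to the canonical identification $\updownarrow$, the map $\varphi$ is given coordinatewise by $h'_n(u)=\det(u)^{a_n}\Ad_{w_n}(u\oplus I_{m_n-k_n})$ or by $h'_n(u)=\det(u)^{a_n}\Ad_{w_n}(\overline u\oplus I_{m_n-k_n})$. Here $m_n/k_n\to1$ and $k_n\to\infty$. Suppose, for contradiction, that $a_n\neq0$ for all $n$ in an infinite set $I$. Passing to an infinite subset, we may assume that on $I$ one of the two alternatives holds throughout, say the standard one; the dual case is identical with the exponent changed as indicated below.

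For $n\in I$, evaluate $h'_n$ on a scalar $z I_{k_n}$ with $z\in\mathbb T$. In the standard case, $h'_n(zI_{k_n})$ is $z^{a_nk_n+1}$ on the $k_n$-dimensional block and $z^{a_nk_n}$ on the complementary $(m_n-k_n)$-dimensional block (conjugation by $w_n$ does not affect scalars on blocks). In the dual case the first exponent is $a_nk_n-1$ instead. Set $N_n:=|a_nk_n\pm1|$, with the sign chosen according to the case. Since $a_n\neq0$, we have $N_n\ge k_n-1\ge2$ for large $n$.

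Next choose $z_n$ to be an $N_n$-th root of unity whose argument is as close to $\pi$ as possible. Then $\arg z_n$ is within $\pi/2$ of $\pi$ (in fact it equals $\pi$ when $N_n$ is even, and is within $\pi/N_n\le\pi/3$ of $\pi$ once $N_n\ge3$). Hence $|z_n-1|\ge\sqrt2$ uniformly. Define $u_n=z_nI_{k_n}$ for $n\in I$ and $u_n=1$ otherwise. Then $\dHSk{k_n}(u_n,1)=|z_n-1|\ge\sqrt2$ on the infinite set $I$, so $[u_n]_n\neq1$. On the other hand, for $n\in I$ the matrix $h'_n(u_n)$ equals $1$ on the $k_n$-block and is a unitary scalar on a block of normalized size $(m_n-k_n)/m_n\to0$. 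Therefore $\dHSk{m_n}(h'_n(u_n),1)\le 2\sqrt{(m_n-k_n)/m_n}\to0$. This shows that the image of $[u_n]_n$ is trivial, and in particular it remains trivial after the identification with $\UHS[(l_n)_n]$, which is an isometric isomorphism. Thus $\varphi([u_n]_n)=1$, contradicting the injectivity of $\varphi$.

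The only delicate point is making sure the coordinatewise formulas from Lemma~\ref{lem:product-form-stabilization} may legitimately be evaluated on the specific sequence $(u_n)_n$. This is fine because those formulas describe the same reduced-product map as $\varphi$ on every element, and the closeness in Theorem~\ref{th_stab} is uniform in $v$. The other point to check is the uniform lower bound on $|z_n-1|$, which holds because $N_n\ge2$.
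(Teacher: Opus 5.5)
Your proposal is correct and is essentially the paper's argument. Both proofs evaluate the coordinatewise formulas on scalars $z_nI_{k_n}$ with $z_n^{N_n}=1$ and $|z_n-1|\ge\sqrt2$, which produces a nontrivial central element killed by $\varphi$, contradicting injectivity. The differences are cosmetic: you go directly from $a_n\neq0$ to $|N_n|\ge k_n-1\ge2$ instead of first proving $|N_n|=1$ eventually. You also explicitly bound the complementary block of relative dimension $(m_n-k_n)/m_n\to0$ (it is the $w_n$-conjugate of that block, which is harmless by unitary invariance), whereas the paper suppresses this by writing the action on scalars as $zI_{l_n}\mapsto z^{N_n}I_{l_n}$.
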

	
	\begin{proof}
		On scalar matrices the two possibilities from
		Lemma~\ref{lem:product-form-stabilization} act by
		\[
		zI_{l_n}\mapsto z^{a_nk_n+1}I_{l_n}
		\qquad\text{or}\qquad
		zI_{l_n}\mapsto z^{a_nk_n-1}I_{l_n}.
		\]
		For each $n$, let $N_n$ denote the corresponding exponent, so that on
		scalar matrices the induced map is
		\[
		zI_{l_n}\longmapsto z^{N_n}I_{l_n},
		\qquad N_n\in\{a_nk_n+1,a_nk_n-1\}.
		\]
		Since $\varphi$ is injective on the reduced-product center, we claim that
		$|N_n|=1$ for all sufficiently large $n$. Otherwise there is an infinite
		set $I\subseteq \bbN$ such that $|N_n|>1$ for every $n\in I$. For each
		$n\in I$, choose $z_n\in \mathbb T$ with $z_n^{N_n}=1$ and
		$|z_n-1|\ge \sqrt2$: if $N_n$ is even, take $z_n=-1$, and if $N_n$ is
		odd, take
		\[
		z_n:=e^{i\pi(1-1/N_n)}.
		\]
		Then $z_n^{N_n}=1$ and $\Re(z_n)\le 0$, hence $|z_n-1|\ge \sqrt2$.
		Set $z_n:=1$ for $n\notin I$. The element
		$
		z:=[z_nI_{k_n}]_n\in Z(\UHS[(k_n)_n])
		$
		is nontrivial because $|z_n-1|$ stays bounded away from $0$ on the
		infinite set~$I$, whereas its image under $\varphi$ is trivial since
		$z_n^{N_n}=1$ for every $n\in I$. This contradicts injectivity of
		$\varphi$ on the center. Therefore $|N_n|=1$ eventually. Since
		$k_n\to\infty$, this forces $a_n=0$ eventually.
	\end{proof}
	
	\begin{proof}[Proof of Proposition~\ref{prop_stab}]
		Let $(h_n)_n$ be a sequence of maps $h_n:\U(k_n)\to \U(l_n)$ such that
		$
		\varphi([u_n]_n)=[h_n(u_n)]_n$ for all $[u_n]_n\in \UHS[(k_n)_n].$
		By Lemma~\ref{lem:product-form-asymptotic}, after discarding finitely
		many indices the maps $h_n$ satisfy the asymptotic multiplicativity,
		surjectivity, and equicontinuity properties needed for compact-group
		stability. Lemma~\ref{lem:product-form-stabilization} then gives
		$
		\lim_{n\to +\infty} \frac{k_n}{l_n}=1
		$
		and shows that the same reduced-product map as $\varphi$ is described,
		after the canonical identification of asymptotically equivalent
		dimensions, by coordinatewise standard or contragredient automorphisms
		twisted by determinant characters. Lemma~\ref{lem:product-form-det-twist}
		removes the determinant twists. Therefore, after altering finitely many
		coordinates arbitrarily, the map $\varphi$ is induced by a sequence
		$(\alpha_n)_n$ with each $\alpha_n\in \Aut(\U(l_n))$.
	\end{proof}
	
	\subsection{Proof of Theorem \ref{th.action boundary}}
	
	\begin{remark}
		The  center $Z(\UHS[(k_n)_n])$ of the group $\UHS[(k_n)_n]$ consists precisely of those $u\in \UHS[(k_n)_n]$ for which there exists a representing sequence $(u_n)_n \in \prod_n Z({\U}(k_n))$ such that $u = [u_n]_n$ (see \cite{dowerkthom}*{Lemma~7.1} or \cite{dowerk}).
	\end{remark}
	
	We make use of (instances of) the following property of the groups ${\U}(k_n)$, as well as of their \emph{uniform bounded normal generation modulo the center} property as formulated in Lemma~\ref{th.boundedgeneration}.
	
	\begin{lemma} \label{th.stababgroup} {\rm{(see e.g. \cite{Hadwin-Schulman})}}
		If $G$ is a finite abelian group then every homomorphism $\chi : G \to \UHS[(k_n)_n]$ factors through a homomorphism $\widehat{\chi} : G \to \prod_n {\U}(k_n)$.
	\end{lemma}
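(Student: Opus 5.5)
We reduce to a perturbation statement for commuting unitaries and then solve that by spectral projections. Let $G$ be finite abelian and $\chi : G \to \UHS[(k_n)_n]$ a homomorphism. By the structure theorem, $G \cong \bbZ/d_1 \times \cdots \times \bbZ/d_r$ with generators $g_1,\dots,g_r$. It suffices to find lifts $U^{(i)}_n \in \U(k_n)$ of the images $\chi(g_i)$ with two properties: $(U^{(i)}_n)^{d_i} = 1$, and the $U^{(i)}_n$ commute pairwise for each fixed $n$. Setting $\widehat\chi(g_1^{e_1}\cdots g_r^{e_r})_n := \prod_i (U^{(i)}_n)^{e_i}$ then defines a homomorphism $G \to \prod_n \U(k_n)$ which descends to $\chi$.

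Start from arbitrary lifts $V^{(i)}_n$ of $\chi(g_i)$. The relations of $G$ hold in the reduced product, so the defects
\[
\|(V^{(i)}_n)^{d_i} - 1\|_{2,k_n} \quad\text{and}\quad \|[V^{(i)}_n, V^{(j)}_n]\|_{2,k_n}
\]
all tend to $0$. The task is therefore a uniform Hilbert--Schmidt stability statement for the relations of $G$: for every $\varepsilon>0$ there is $\delta>0$ such that, in any dimension $k$, unitaries satisfying the relations up to $\delta$ in $\|\cdot\|_{2,k}$ lie within $\varepsilon$ of unitaries satisfying them exactly. A diagonal argument along $n$ then produces exact lifts at vanishing distance, which is all we need.

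To prove this stability we induct on $r$. For a single generator, apply the spectral theorem to $V$ and move each eigenvalue $\lambda$ to a nearest $d$-th root of unity $\omega$. Since $|\lambda-\omega| \le C_d|\lambda^d-1|$, this moves $V$ by at most $C_d\|V^d-1\|_2$. For the inductive step, first replace $V^{(1)}$ by an exact $d_1$-torsion unitary $U^{(1)}$ as above. Let $P_\omega$ be its spectral projections. Because the $d_1$-th roots of unity are separated, the almost-commutation $\|[U^{(1)},V^{(j)}]\|_2 \to 0$ gives $\|[P_\omega,V^{(j)}]\|_2 \to 0$. Indeed, $P_\omega$ is a fixed polynomial in $U^{(1)}$, namely $\frac{1}{d_1}\sum_t \bar\omega^t (U^{(1)})^t$, and commutators with words obey a Leibniz-type bound in $\|\cdot\|_2$ using operator norm $1$. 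Replace $V^{(j)}$ by the polar part of $\sum_\omega P_\omega V^{(j)} P_\omega$. This unitary commutes with $U^{(1)}$ and is $\|\cdot\|_2$-close to $V^{(j)}$, because the compression differs from $V^{(j)}$ by the off-diagonal parts, which are controlled by the commutators, and taking polar parts is $\|\cdot\|_2$-stable for near-unitaries. Now apply induction inside each block $P_\omega \bbC^{k_n}$ to the remaining $r-1$ generators. The normalized $2$-norm is a weighted average over the blocks, so the block errors combine to a global error of the same order.

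The main obstacle is uniformity. All constants must depend only on $G$, not on $k_n$ and not on the sizes of the spectral blocks. This is exactly why we use only finitely many spectral projections at fixed separated points; a general continuous-spectrum argument would fail here, since almost commuting matrices need not be near commuting ones in operator norm. Finally, the statement cites \cite{Hadwin-Schulman}, and one can alternatively invoke that result directly as the Hilbert--Schmidt stability of finite abelian groups, with the argument above as its self-contained proof.
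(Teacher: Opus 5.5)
Your proof is correct, but it takes a different route from the paper. The paper does not prove this lemma at all; it simply defers to the Hilbert--Schmidt stability results of Hadwin--Shulman, which cover far more than finite abelian groups.

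\textbf{Your route.} You give a self-contained induction on the number of cyclic factors:
\begin{enumerate}
\item Round the spectrum of the first generator to $d_1$-th roots of unity. This uses the linear bound $|\lambda-\omega|\le \sin(\pi/d_1)^{-(d_1-1)}\,|\lambda^{d_1}-1|$ for $\lambda$ on the circle and $\omega$ the nearest root.
\item Pass to the spectral projections $P_\omega=\frac{1}{d_1}\sum_t\bar\omega^t U^t$, whose commutators with the other generators are controlled by a Leibniz bound.
\item Compress the remaining generators to the block diagonal and take polar parts.
\item Recurse inside each block.
\end{enumerate}

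\textbf{Comparison.} Your route gives an explicit estimate of the form $\mathrm{dist}\le C_G\cdot\mathrm{defect}$, with constants depending only on $G$. It also never changes the dimension. By contrast, the compression form $V^*\rho V$ of Theorem~\ref{app:thm-dCOT}, or Gowers--Hatami, would need extra work to land back in $\U(k_n)$. And it is completely elementary. The citation buys brevity and generality, for instance for $\bbZ^d$, where the spectrum is continuous and your separated finite-spectrum trick is unavailable, as you correctly note.

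Two small points would make your write-up airtight.

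First, state the inductive claim in linear form: $\mathrm{dist}_2\le C_G\,\mathrm{defect}_2$, with squared normalized norms summed over generators and relations. For block-diagonal matrices one has $\|Y\|_{2,k}^2=\sum_\omega \frac{k_\omega}{k}\|Y_\omega\|_{2,k_\omega}^2$, so the blockwise estimates then add up exactly. A bare $\varepsilon$--$\delta$ hypothesis does not apply to small blocks that carry large normalized defect. In that case you would need a Markov-type argument to discard them, assigning them any exact solution at cost proportional to their total weight. Your constructions do yield linear bounds, so this is only a matter of formulation.

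Second, choose the polar unitary of $\sum_\omega P_\omega V^{(j)}P_\omega$ blockwise. When this compression is singular, an arbitrary polar unitary need not commute with $U^{(1)}$.

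With linear bounds the diagonal argument along $n$ also becomes unnecessary: the exact lifts lie at distance $O(\mathrm{defect}_n)\to 0$ directly.
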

	
	\begin{definition}
		For $S\in \pow(\bbN)/ \Fin$, define
		\[ Z_S(\UHS[(l_n)_n])=\{ u\in \UHS[(l_n)_n]:(\forall v\in\UHS[(l_n)_n])\  uv=_S vu \}.\]
	\end{definition}
	
	\begin{lemma}\label{th.boundedgeneration}
		For all $\varepsilon > 0$ there exists a natural number $N$ such that for all~$n$ and all $u,v \in \U(n)$ with $\dHSn(u, Z({\U}(n))) \geq \varepsilon$, it is possible to write $v$ as a product of an element of $Z({\U}(n))$ and $N$-many elements of $\U(n)$ each of which is a conjugate of either $u$ or $u^{-1}$.
	\end{lemma}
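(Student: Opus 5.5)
The plan is to reduce to $\SU(n)$, use one commutator to extract from $u$ an element of a controlled shape, and then invoke a covering theorem for conjugacy classes in $\SU(n)$. In the paper this statement is quoted from \cite{dowerkthom}*{Theorem~1.1 and Lemma~7.8}, so what follows is a sketch of how I would reprove it. Throughout, constants depend only on $\varepsilon$ and never on $n$.

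\emph{Step 1: reduction to $\SU(n)$.} Write $v=\omega v'$ with $\omega\in Z(\U(n))$ and $v'\in\SU(n)$; take $\omega$ to be an $n$-th root of $\det v$. This absorbs the central factor allowed in the statement. It then suffices to write every $v'\in\SU(n)$ as a product of $N(\varepsilon)$ conjugates of $u^{\pm1}$, possibly times one further scalar.

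\emph{Step 2: extract a regular element.} The hypothesis $\dHSn(u,Z(\U(n)))\ge\varepsilon$ says that the eigenvalues of $u$ are not concentrated near any single point of $\mathbb T$. By a pigeonhole argument on the eigenvalue distribution, I expect to find at least $c(\varepsilon)n$ disjoint pairs of eigenvectors $(e_i,f_i)$ whose eigenvalues $\lambda_i,\mu_i$ satisfy $|\lambda_i-\mu_i|\ge c'(\varepsilon)$.
\begin{itemize}
\item Let $w$ be the unitary that swaps each $e_i$ with $f_i$ and is the identity elsewhere.
\item Then the commutator $g:=u\,(wu^{-1}w^{-1})$ is a product of a conjugate of $u$ and a conjugate of $u^{-1}$.
\item On each block $\mathrm{span}(e_i,f_i)$, $g$ acts diagonally with eigenvalues $\lambda_i\mu_i^{-1}$ and $\mu_i\lambda_i^{-1}$. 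These lie a fixed distance from $1$. Elsewhere $g$ is the identity.
\end{itemize}
Next, take a product of $O(1/c(\varepsilon))$ conjugates of $g$ by permutation unitaries moving these blocks onto disjoint positions. This yields an element $g'\in\SU(n)$ in which a fraction close to $1$ of the eigenvalues are bounded away from $1$, and no eigenvalue has multiplicity exceeding, say, $n/10$. If needed, further products of conjugates spread the eigenvalues so that no point of $\mathbb T$ carries too much mass.

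\emph{Step 3: covering, the main obstacle.} The remaining task is a uniform covering statement: the product of a bounded number of conjugacy classes of such \emph{spread-out} elements $g'$ is all of $\SU(n)$, modulo a scalar. This is the step I expect to be hardest, because the bound must be independent of $n$.

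I would first try a Thompson/Liebeck--Shalev style argument. One direction uses products of two conjugacy classes containing all elements with suitable eigenvalue constraints, via the Horn--Klyachko type conditions for multiplicative eigenvalue problems (Agnihotri--Woodward, Belkale). These inequalities are linear in the eigenvalue angles, so uniform spreading should verify them for boundedly many factors. The alternative route, closer to \cite{dowerkthom}, is to show that every element of $\SU(n)$ is a bounded product of commutators of involution-type elements, and that each of these is reachable from $g'$ in boundedly many steps.

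Counting the factors used in Steps 1--3 gives $N=N(\varepsilon)$.
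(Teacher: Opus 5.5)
\textbf{The gap is Step~3.} The $n$-independent covering statement you leave open says that boundedly many conjugates of a fixed element far from the center exhaust $\PU(n)$. That is not a technical step: it is the whole lemma, i.e.\ the Dowerk--Thom bounded normal generation theorem, and neither route you suggest is carried out.
\begin{itemize}
\item The multiplicative Horn inequalities of Agnihotri--Woodward and Belkale are indexed by Gromov--Witten invariants of all Grassmannians $\mathrm{Gr}(r,n)$, and their number grows with $n$. You would have to verify them for every target class simultaneously, with a number of factors independent of $n$. ``Linear in the angles, so spreading should verify them'' is a heuristic, not an argument.
\item In your second route, the key half is that every involution-type element is reachable from $g'$ in boundedly many steps, uniformly in $n$. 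That is itself an instance of the statement being proved.
\end{itemize}
The paper does not reprove this. Set $\ell_{1,n}(\bar u)=\inf_{\lambda\in\mathbb T}\|1-\lambda u\|_{1,n}$. The inequality $\|x\|_{1,n}\ge\|x\|_{2,n}^2/\|x\|$ together with $\|1-\lambda u\|\le 2$ gives $\ell_{1,n}(\bar u)\ge\frac12\,\dHSn(u,Z(\U(n)))^2\ge\varepsilon^2/2$. The paper feeds this into the quantitative bound of \cite{dowerkthom}: every element of $\PU(n)$ is a product of at most $C(1+|\log\ell_{1,n}(\bar u)|)/\ell_{1,n}(\bar u)$ conjugates of $\bar u^{\pm1}$. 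It then lifts the factorization back to $\U(n)$ at the cost of one central scalar. No preprocessing of $u$ is needed.

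\textbf{Step~2 also contains a false claim.} Your pigeonhole and commutator computations are correct. However, the assertion that $g'$ has no eigenvalue of multiplicity exceeding $n/10$ fails. Take $u$ to be an involution of trace $0$, so that $\dHSn(u,Z(\U(n)))=\sqrt2$. Every admissible pair then has $\lambda_i\mu_i^{-1}=\mu_i\lambda_i^{-1}=-1$, so $g$ equals $-1$ on every block. Consequently, a product of permuted copies of $g$ filling almost all coordinates lies close to the central element $-I$. This is exactly the degenerate situation the construction must avoid; with $n/2$ pairs one even gets $g=-I$ on the nose. The step is repairable: use only a fixed proportion of the blocks, or compose non-commuting reflections to produce $2\times 2$ rotations of prescribed angle. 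But the ``if needed, spread further'' clause would have to be made precise. Even then, Steps~1--2 are only preprocessing, and the $n$-uniform covering theorem they are meant to feed is still missing.
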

	
	\begin{proof}
		Let $\bar u,\bar v\in \PU(n)$ be the images of $u$ and $v$, and write
		$
		\|x\|_{1,n}:=\tau_n(|x|)$ 
		(for $x\in \M_{n}(\mathbb C)$).
		Set
		$
		\ell_{1,n}(\bar u):=\inf_{\lambda\in \mathbb T}\|1-\lambda u\|_{1,n}.
		$
		Since $\|1-\lambda u\|\le 2$ for every $\lambda\in\mathbb T$, the
		inequality $\|x\|_{1,n}\ge \|x\|_{2,n}^2/\|x\|$ gives
		\[
		\ell_{1,n}(\bar u)
		\ge \frac12\inf_{\lambda\in\mathbb T}\|1-\lambda u\|_{2,n}^2
		= \frac12\,\dHSn(u,Z({\U}(n)))^2
		\ge \frac{\varepsilon^2}{2}.
		\]
		
		By \cite{dowerkthom}*{Theorem~1.1 and Lemma~7.8}, there exists a universal
		constant $C>0$ such that every element of $\PU(n)$ is a product of at most
		\[
		C\,\frac{1+|\log \ell_{1,n}(\bar u)|}{\ell_{1,n}(\bar u)}
		\]
		conjugates of $\bar u$ and $\bar u^{-1}$. Consequently, with
		\[
		N:=\left\lceil C\,\frac{1+|\log(\varepsilon^2/2)|}{\varepsilon^2/2}\right\rceil,
		\]
		the element $\bar v$ is a product of at most $N$ conjugates of $\bar u$ and
		$\bar u^{-1}$ in $\PU(n)$. Lifting this factorization from $\PU(n)$ back to
		$\U(n)$ introduces only a central scalar, which is exactly the required
		statement.
	\end{proof}
	
	For an involution $a\in \U(k)$, let $m_-(a)$ denote the multiplicity of
	the eigenvalue $-1$. Then, we define the \emph{normalized trace} of $a$ by
	\[
	\ntr(a):=1-\frac{2m_-(a)}{k} \in [-1,1].
	\]
	Note that $\ntr(a)$ is just the normalized trace of $a$ as an element of the matrix algebra $\M_k(\bbC)$.
	Also note that for $u \in \UHS[(k_n)_n]$, its normalised trace $\ntr(u)$ is well defined as an element of the reduced product $[-1,1]^\bbN / \Fin$. The metric reduced power $[a,b]^\bbN / \Fin$ of a real interval $[a,b]$ has as elements the $[a,b]$-valued sequences considered up to addition of an element of $c_0$. Moreover on $[0,1]^\bbN / \Fin$ the order relation $\leq$ and the following two binary operations $\oplus$ and $\odot$ are well defined:
	\begin{align*} [x_n]_n\leq [y_n]_n&\Leftrightarrow \max(x_n-y_n,0)\to 0,\\
		[x_n]_n\oplus [y_n]_n &=[\min(x_n+y_n,1)]_n,\\
		[x_n]_n\odot [y_n]_n&=[\max(x_n+y_n-1,0)]_n.\end{align*}
	It is straightforward to check that each of the two operations $\oplus$,$\odot$ is definable in $[0,1]^\bbN / \Fin$ from the order $\leq$ and the other operation.
	The following lemma shows that certain inequalities of the absolute normalized trace can be expressed in group theoretic terms in the groups $\UHS[(k_n)_n]$.
	\begin{lemma}\phantomsection\label{flower}
		\leavevmode
		\begin{enumerate}
			\item \label{eq:a} 
			The following are equivalent for involutions $u,c\in \UHS[(k_n)_n]$ :
			\begin{enumerate}
				\item \label{eq:a1} $\exists g,h\in\UHS[(k_n)_n]$ such that $u^gu^h=c$,
				\item \label{eq:a2} $2\, |\ntr(u)| -1 \leq\ntr(c)$.
			\end{enumerate}
			\item \label{eq:b}    The following are equivalent for involutions $u,v,c \in \UHS[(k_n)_n]$: 
			\begin{enumerate}
				\item \label{eq:b1} $\exists g_1,g_2,h_1,h_2\in\UHS[(k_n)_n]$ such that $u^{g_1},u^{g_2},v^{h_1},v^{h_2}$ all commute and 
				
				$u^{g_1}u^{g_2}v^{h_1}v^{h_2}=c$,
				\item \label{eq:b2} $2(|\ntr(u)|+|\ntr(v)|)-3\leq\ntr(c)$.
			\end{enumerate}
		\end{enumerate}
	\end{lemma}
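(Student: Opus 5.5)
The plan is to reduce both equivalences to exact combinatorial statements about commuting involutions in a single $\U(k)$, using two facts. First, Lemma~\ref{th.stababgroup} lets us lift commuting families of involutions coordinatewise. Second, in $\U(k)$ the conjugacy class of an involution is determined by $m_-$.

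\emph{Direction (a1)$\Rightarrow$(a2).} If $u^g u^h=c$ with $c$ an involution, then $u^g u^h u^g u^h=1$. Hence $u^g$ and $u^h$ commute, since a product of two involutions is an involution iff they commute. So they generate a quotient of $(\bbZ/2)^2$. Applying Lemma~\ref{th.stababgroup} to this finite abelian group, we get representatives $a_n,b_n\in\U(k_n)$ that are commuting involutions. Each $a_n$ has the same $\ntr$ as $u$ up to $o(1)$, and the same holds for $b_n$, and $[a_nb_n]_n=c$.

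Commuting involutions are simultaneously diagonalizable. The $-1$-eigenspace of $a_nb_n$ then corresponds to a symmetric difference $A\triangle B$ of two coordinate sets of sizes $p_n\approx q_n$, where $p_n/k_n = (1-\ntr(u))/2+o(1)$. Using $|A\triangle B|=|A^c\triangle B^c|$, we get
\[
|A\triangle B|\le |A|+|B| \quad\text{and}\quad |A\triangle B|\le |A^c|+|B^c|,
\]
so $m_-(a_nb_n)/k_n\le 2\min(x,1-x)+o(1)$ with $x=(1-\ntr u)/2$. Since $2\min(x,1-x)=1-|\ntr u|$, this is exactly $\ntr(c)\ge 2|\ntr(u)|-1$ in $[-1,1]^\bbN/\Fin$.

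\emph{Direction (b1)$\Rightarrow$(b2).} This is identical, now with the group $(\bbZ/2)^4$ and four sets $A_1,A_2,B_1,B_2$. Pairing them gives
\[
|A_1\triangle A_2\triangle B_1\triangle B_2|\le |A_1\triangle A_2|+|B_1\triangle B_2|\le k\bigl(2-|\ntr u|-|\ntr v|\bigr)+o(k),
\]
which is (b2).

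\emph{Converse directions.} Given (a2), lift $u$ to involutions $u_n$ with $m_-(u_n)=p_n$, and $c$ to involutions $c_n$ with $m_-(c_n)=r_n$. The hypothesis says $r_n\le 2\min(p_n,k_n-p_n)+o(k_n)$. First change $o(k_n)$ eigenvalues of $c_n$ from $-1$ to $1$; this does not change $[c_n]_n$, because the $\dHS$-distance is $O(\sqrt{o(k_n)/k_n})\to 0$. After this change, $r_n\le 2\min(p_n,k_n-p_n)$ and $r_n$ is even.

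Now construct explicitly, in a basis diagonalizing $c_n$, two diagonal involutions $a_n,b_n$ with $m_-=p_n$ and $a_nb_n=c_n$. To do so, choose sets $A,B$ of size $p_n$ with $|A\triangle B|=r_n$ and $A\triangle B$ equal to the $-1$-set of $c_n$. This is possible exactly when $r_n$ is even and $r_n\le 2\min(p_n,k_n-p_n)$. Since $a_n$ and $b_n$ are conjugate to $u_n$, we obtain $g,h$ with $u^gu^h=c$.

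For (b) the construction is analogous. Realize the $-1$-set of $c_n$, of even size at most $\min\{k_n,\,2\min(p,k-p)+2\min(q,k-q)\}$, as $(A_1\triangle A_2)\triangle(B_1\triangle B_2)$. Split the target set into two disjoint even pieces of admissible sizes for the $u$-pair and the $v$-pair. If the target is too large to split disjointly within the budget, use overlapping pieces instead, with parity again fixed by an $o(k_n)$ perturbation.

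\emph{Main obstacle.} I expect the main difficulty to be this last combinatorial realization in (b). The cap at $k_n$ (the case $\ntr(c)=-1$) forces overlaps, and one must check that every even value up to the bound is attainable as a four-fold symmetric difference with prescribed sizes. I would handle it by an intermediate-value argument: the size $|A_1\triangle A_2|$ can be any even number in $[0,2\min(p,k-p)]$, and the same holds for the $B$-pair. Placing the second symmetric difference so that it overlaps the first by a chosen amount then lets the total range over all even values from $0$ up to the capped sum. The remaining bookkeeping is the $o(1)$-error handling in $[-1,1]^\bbN/\Fin$, which is routine.
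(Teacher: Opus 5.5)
Your proposal is correct and follows essentially the paper's argument: in one direction you lift the commuting involutions via Lemma~\ref{th.stababgroup} and bound $m_-$ of the product after simultaneous diagonalization; in the other you realise the $-1$-support of $c_n$, up to $o(k_n)$ coordinates, as symmetric differences of $-1$-supports of diagonal conjugates. The only cosmetic differences are that you handle $|\ntr|$ through $\min(p_n,k_n-p_n)$ and complements rather than the paper's replacement $u_n\mapsto -u_n$, and that you perturb $c_n$ first and then realise it exactly rather than approximating it. The obstacle you anticipate in (b) does not arise. After the perturbation, the $-1$-set of $c_n$ has even size at most $2\min(p,k-p)+2\min(q,k-q)$, so it always splits into two disjoint even pieces within the respective budgets, exactly as in the paper, and no overlapping pieces are needed.
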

	\begin{proof}
		We first prove part \eqref{eq:a}. Choose representing sequences of exact
		involutions $u_n,c_n$. Replacing $u_n$ by $-u_n$ if necessary does not
		change $|\ntr(u_n)|$ or the product of two conjugates, so we may assume
		$\ntr(u_n)\ge 0$ for all $n$.
		
		Assume first that $2\,|\ntr(u)|-1\le \ntr(c)$. Fix $\varepsilon>0$. For all
		sufficiently large $n$ one then has
		$
		m_-(c_n)\le 2m_-(u_n)+\varepsilon k_n.
		$
		We may without loss of generality assume $c_n$ is diagonal. Choose two diagonal conjugates
		$u_n',u_n''$ of $u_n$ such that the symmetric difference of their $-1$-supports differs from the $-1$-support of $c_n$ in at most $\varepsilon k_n + 1$
		coordinates. Then $c_n':=u_n'u_n''$ is an involution and
		\[
		\dHSk{k_n}(c_n',c_n)^2\le 4(\varepsilon + \frac{1}{k_n}).
		\]
		Hence it is possible to select for every $n$ a unitary matrix $c'_n$ that is the product of two conjugates of $u_n$ in such a way that $[c_n']_n=c$, which proves \eqref{eq:a1}.
		
		Conversely, suppose $c=u^gu^h$. Choose, using Lemma~\ref{th.stababgroup}, representing sequences of exact
		involutions $a_n,b_n,c_n$ for $u^g,u^h,c$, with $c_n=a_nb_n$. Then
		\[
		\operatorname{rank}(c_n-1)
		=\operatorname{rank}(a_nb_n-1)
		\le \operatorname{rank}(a_n-1)+\operatorname{rank}(b_n-1)
		=2m_-(u_n) + o(k_n).
		\]
		Since $\operatorname{rank}(c_n-1)=m_-(c_n)$, this yields
		$
		m_-(c_n)\le 2m_-(u_n) + o(k_n),
		$
		which is exactly $2\,|\ntr(u)|-1\le \ntr(c)$.
		
		We now prove part \eqref{eq:b}. Use Lemma~\ref{th.stababgroup} to
		choose for $u,v$ representing sequences of exact involutions $u_n,v_n$ such that
		$u_nv_n=v_nu_n$ for all $n$, and choose any representing sequence of exact
		involutions $c_n$ for $c$. Replacing $u_n$ and $v_n$ by $-u_n$ and $-v_n$
		if necessary, we may again assume $\ntr(u_n),\ntr(v_n)\ge 0$. We may also assume again that each $c_n$ is diagonal.
		
		Assume first that \eqref{eq:b2} holds. Fix $\varepsilon>0$. For all
		sufficiently large $n$ we then have
		$
		m_-(c_n)\le 2m_-(u_n)+2m_-(v_n)+\varepsilon k_n.
		$
		
		Choose diagonal conjugates $u_n',u_n''$ of $u_n$ and $v_n',v_n''$ of
		$v_n$ such that, defining $D_1$ to be the symmetric difference of the $-1$-supports of $u_n',u_n''$ and $D_2$ to be the symmetric difference of the $-1$-supports of $v_n',v_n''$, we have that $D_1$ and $D_2$ are disjoint and also that $D_1 \sqcup D_2$ differs from the
		$-1$-support of $c_n$ in at most $\varepsilon k_n + 1$ coordinates. Then the
		four factors commute and
		$
		c_n':=u_n'u_n''v_n'v_n''
		$
		satisfies $\dHSk{k_n}(c_n',c_n)^2\le 4(\varepsilon+ 1 /k_n)$. Again it follows that one can select suitable unitary matrices $c'_n$ for every $n$ with
		$[c_n']_n=c$, which proves \eqref{eq:b1}.
		
		Conversely, suppose \eqref{eq:b1} holds. Let
		$u^{(i)}:=u^{g_i}$ and $v^{(i)}:=v^{h_i}$. By Lemma~\ref{th.stababgroup}
		we may choose representing sequences of exact involutions
		$u_n^{(1)},u_n^{(2)},v_n^{(1)},v_n^{(2)}$ and $c_n$ such that all four
		factors commute and
		$
		c_n=u_n^{(1)}u_n^{(2)}v_n^{(1)}v_n^{(2)}$ (for $n\in\bbN$).
		After simultaneous diagonalization, the multiplicity of $-1$ in $c_n$ is
		at most the sum of the multiplicities of $-1$ in the four factors. Hence
		$
		m_-(c_n)\le 2m_-(u_n)+2m_-(v_n) + o(k_n),
		$
		which translates exactly into \eqref{eq:b2}.		
	\end{proof}

	We prove part \eqref{itm1} of Theorem \ref{th.action boundary} by proving the following proposition.
	
	\begin{proposition}
		Let $\varphi:\UHS[(k_n)_n]\to\UHS[(l_n)_n]$ be a group isomorphism.
		\begin{enumerate}
			\item\label{eq:1} There exists $\theta\in\Aut(\partial \beta \omega)$ such that for every involution $u\in \UHS[(k_n)_n]$ and every $\cU\in\partial\beta\omega$:
			\[ \lim_{n\to \cU}|\ntr(u'_n)| =\lim_{n\to \theta(\cU)} |\ntr(u_n)|,  \]
			where $\varphi(u)=[u'_n]_n$.
			\item\label{eq:2} With $\theta$ as in \eqref{eq:1}, for all $u,v\in\UHS[(k_n)_n]$ and $S\in \pow(\bbN)/\Fin$:
			\[ u=_S v\Rightarrow \varphi(u)\varphi(v)^{-1}\in Z_{\theta(S)}(\UHS[(l_n)_n]).   \]
			\item \label{eq:3} With $\theta$ as in \eqref{eq:1}, for all $u,v\in \UHS[(k_n)_n]$ and $S\in \pow(\bbN)/\Fin$:
			\[  u=_S v \Leftrightarrow \varphi(u)=_{\theta(S)} \varphi(v). \]
		\end{enumerate}
	\end{proposition}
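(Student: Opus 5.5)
The plan is to prove the three parts in order. Part~\eqref{eq:1} carries the real content: the boundary automorphism $\theta$ is extracted from the order structure of absolute normalized traces of involutions. Parts~\eqref{eq:2} and \eqref{eq:3} then follow by characterizing the subgroups $\{w : w =_S \text{central}\}$ and $\{w: w=_S 1\}$ in terms of involutions whose trace data is controlled on $S$.

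\textbf{Step 1 (part \eqref{eq:1}).} For an involution $u\in\UHS[(k_n)_n]$ set $t(u):=|\ntr(u)|\in[0,1]^\bbN/\Fin$. Every element of $[0,1]^\bbN/\Fin$ arises as some $t(u)$, up to $O(1/k_n)$ rounding, which vanishes in the reduced power. Conditions \eqref{eq:a1} and \eqref{eq:b1} of Lemma~\ref{flower} are purely group-theoretic, since they involve only involutions, conjugates, commutation and products. Hence $\varphi$ preserves them. Taking $c$ to be an involution as well, \eqref{eq:a2} with $u=c$ and its converse recover the relation $t(u)\le t(c)$, once one notes that $\ntr(c)$ and $-\ntr(c)$ are interchangeable via $c\mapsto -c$. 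Part~\eqref{eq:b} recovers the truncated operation $\odot$ (after rescaling, $2x-1$). So $\varphi$ induces a bijection $\Theta$ of $[0,1]^\bbN/\Fin$ that preserves $\le$ and $\odot$, and hence also $\oplus$, by the remark preceding Lemma~\ref{flower}. The idempotents $x\oplus x=x$ are exactly the classes of $\{0,1\}$-valued sequences, that is $\pow(\bbN)/\Fin$. Thus $\Theta$ restricts to a Boolean automorphism, and Stone duality gives $\theta\in\Aut(\partial\beta\bbN)$. To get the ultrafilter statement I would use the lattice operation $x\wedge 1_S$, which is definable from $\le$: it shows $\Theta$ commutes with restriction to $S$, so for each $\cU$ the map $\Theta$ descends to an order- and $\oplus$-automorphism between the ultrapowers $[0,1]^\bbN/\cU$ and $[0,1]^\bbN/\theta(\cU)$ (with the direction of $\theta$ as in the statement). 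Such a map fixes every dyadic rational, because $1/2$ is the unique $x$ with $x\oplus x=1$ and $x\odot x=0$, and one iterates. By monotonicity it therefore preserves standard parts, which gives the displayed equality of ultralimits.

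\textbf{Step 2 (part \eqref{eq:2}).} Let $M_S$ be the subgroup generated by involutions $u$ with $t(u)=_S 1$, i.e.\ $u=_S\pm1$. By Step~1, $\varphi(M_S)=M_{\theta(S)}$. Clearly $M_S\subseteq Z_S$. Conversely, every $w$ with $w=_S 1$ lies in $M_S$. To see this, use that every element of $\U(n)$ is a product of a uniformly bounded number of involutions, choose the factors coordinatewise equal to $1$ on (almost all of) $S$, and glue along coordinates. So $u=_Sv$ gives $uv^{-1}\in M_S$, hence $\varphi(u)\varphi(v)^{-1}\in M_{\theta(S)}\subseteq Z_{\theta(S)}$.

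\textbf{Step 3 (part \eqref{eq:3}) and the main obstacle.} By the localized form of the remark on the center, $Z_{T}$ consists of elements that are $=_T$ to a central element. So Step~2 gives $\varphi(u)\varphi(v)^{-1}=_{\theta(S)} z$ for some central $z=[z_nI]_n$. The difficulty is to show $z=_{\theta(S)}1$, that is, to separate $1$ from nontrivial scalars on $\theta(S)$. Involutions alone cannot do this, since $u$ and $-u$ have the same $t$-value.

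My first attempt would decompose $x:=\varphi(u)\varphi(v)^{-1}=zy$ with $y=_{\theta(S)}1$. Applying Step~2 to $\varphi^{-1}$ gives $\varphi^{-1}(y)=_S$ central, and since $uv^{-1}=_S1$ it follows that $\varphi^{-1}(z)$ is central, because isomorphisms preserve the center. Suppose $z$ is nontrivial on some infinite $T\subseteq\theta(S)$. I would then build a central element supported on $T$, of the form $\zeta=[\zeta_n]_n$ equal to $z_n$ on $T$ and $1$ elsewhere. Its preimage $\varphi^{-1}(\zeta)$ is central and $=_{\theta^{-1}(\ldots)}1$ off the corresponding set. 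Comparing $\varphi^{-1}(\zeta)$ with $uv^{-1}$ on $S$ via Steps~1--2 should force $\varphi^{-1}(\zeta)=_{\theta^{-1}(T)}1$, contradicting injectivity of $\varphi$ on the center. This is the same mechanism as in Lemma~\ref{lem:product-form-det-twist}. If this circularity cannot be broken, the fallback is a group-theoretic characterization of $\{w:w=_S1\}$ as bounded products of commutators $[g,h]$ with $g\in M_S$. Such commutators are $=_S 1$. The converse inclusion would be handled coordinatewise: use commutator width in $\SU(n)$ on $S$-coordinates, and absorb the determinant on the complementary coordinates. The reverse implication in \eqref{eq:3} then follows by applying the forward implication to $\varphi^{-1}$, whose associated boundary map is $\theta^{-1}$.
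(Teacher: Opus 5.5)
Your proof works, but only through the fallback in Step~3; the ``first attempt'' there is circular and should be dropped. Steps~1--2 cannot see central signs. Indeed $M_S$ is exactly the set of $w$ with $w=_S[\epsilon_nI_{k_n}]_n$ for some $\epsilon_n\in\{\pm1\}$, so all you learn is that $\varphi(uv^{-1})$ is $=_{\theta(S)}$ a sign sequence. Your claim that $\varphi^{-1}(\zeta)$ is $=1$ off the corresponding set is literally statement~\eqref{eq:3} for $\varphi^{-1}$.

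The fallback closes the gap, and more simply than you state it. Given $w=_S1$, pick representatives with $w_n=I$ for $n\in S$ and $w_n\in\SU(k_n)$ for $n\notin S$; correcting the determinant costs $O(k_n^{-1/2})$. Since every element of $\SU(k)$ is a single commutator, write $w_n=[a_n,b_n]$ for $n\notin S$, and put $a_n=b_n=I$ on $S$. Then $g=[a_n]_n=_S1$ lies in $M_S$ by your Step~2. Hence $\{w:w=_S1\}=\{[g,h]:g\in M_S,\ h\in\UHS[(k_n)_n]\}$, where the inclusion $\supseteq$ holds because $M_S\subseteq Z_S$. Now $\varphi(M_S)=M_{\theta(S)}$ gives both directions of~\eqref{eq:3} at once. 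Note that the commutator width is used on the $S^c$-coordinates, not on $S$, and no bounded products are needed.

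Two smaller slips. In Step~1, $\ntr(c)$ in~\eqref{eq:a2} is signed, so $t(u)\le t(v)$ is encoded by the inclusion $u^Gu^G\cap G^{(2)}\supseteq v^Gv^G\cap G^{(2)}$, with $G^{(2)}$ the involutions, not by applying~(a) to a single pair. In Step~2, a unitary is a product of involutions only if its determinant is $\pm1$, so pass to $\SU(k_n)$-representatives before factoring.

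Compared with the paper: for~\eqref{eq:1} you use the same input, namely that Lemma~\ref{flower} yields an automorphism of the MV-algebra $[0,1]^\bbN/\Fin$. Instead of citing \cite{cignoli}*{Proposition~4.2}, you read off $\theta$ from the idempotents and transfer ultralimits by hand. That works, but since $\cU$ need not be a P-point, do the descent with dyadic thresholds: $\lim_{\cU}x_n>q$ implies $x\wedge 1_S\ge q1_S$ for some $S\in\cU$, which in turn implies $\lim_{\cU}x_n\ge q$, and $\Theta(q1_S)=q1_{\theta(S)}$.

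For~\eqref{eq:2} your route is genuinely different and more elementary. The paper argues by contradiction via uniform bounded normal generation modulo the center (Lemma~\ref{th.boundedgeneration}, resting on Dowerk--Thom) to produce noncommuting involutions supported on $\theta(T)$. You only need that $\varphi$ transports the involutions with $t(u)\ge 1_S$, an order condition preserved by $\Theta$, plus factorization into boundedly many involutions. You even get the sharper conclusion that elements which are $=_S$ a sign sequence go to elements which are $=_{\theta(S)}$ a sign sequence.

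For~\eqref{eq:3} your corrected fallback is the paper's mechanism in another guise. The paper observes that $\rho_{\theta(S)}\circ\varphi\circ\iota_{S^c}$ maps the perfect group $\UHS[(k_n)_{n\in S^c}]$ into the abelian group $\prod_{n\in\theta(S)}Z(\U(l_n))/\Fin$, and is therefore trivial. Both arguments hinge on every element of $\SU(k)$ being a commutator.
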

	
	\begin{proof}[Proof of \eqref{eq:1}]
		Set $G:=\UHS[(k_n)_n]$, $G^{(2)}=\{ u\in G: u^2=1  \}$.
		Note that the absolute normalised trace defines a map \[\antr: \{ u^G:u\in G^{(2)}  \}\to [0,1]^\bbN/\Fin: ([u_n]_n)^G \mapsto [|\ntr(u_n)|]_n.\]
		Moreover, by Lemma~\ref{flower} we have for all $u,v \in G^{(2)}$:
		\[ \antr(u^G)=\antr(v^G)\Leftrightarrow u^Gu^G \cap G^{(2)} =v^Gv^G \cap G^{(2)},  \]
		\[ \antr(u^G) \leq \antr(v^G)\Leftrightarrow u^Gu^G \cap G^{(2)} \supseteq  v^Gv^G\cap G^{(2)}, \]
		and for all $u_1,v_1,u_2,v_2\in G^{(2)}$:
		\begin{align*} &\antr(u_1^G)\odot \antr(v_1^G)=\antr(u_2^G)\odot\antr(v_2^G)\\
			\Leftrightarrow\  &\{ u_1^{g_1}u_1^{g_2}v_1^{h_1}v_1^{h_2} : u_1^{g_1},u_1^{g_2},v_1^{h_1},v_1^{h_2} \text{ all commute} \} \\ &=   \{ u_2^{g_1}u_2^{g_2}v_2^{h_1}v_2^{h_2} : u_2^{g_1},u_2^{g_2},v_2^{h_1},v_2^{h_2} \text{ all commute} \}. \end{align*}
		Likewise, we get a map $\antr:\{ u^G:u\in \UHS[(l_n)_n]\}\to [0,1]^\bbN/\Fin$ and the MV-structure $([0,1]^\bbN/\Fin,\leq,\odot, \oplus)$ is similarly definable in $\UHS[(l_n)_n]$, so there exists an automorphism $\eta$ of $([0,1]^\bbN/\Fin,\leq,\odot, \oplus)$ such that
		$ \antr(\varphi(u)^G)=\eta(\antr(u^G))$ for all $u\in G^{(2)}. $
		By \cite{cignoli}*{Proposition~4.2} there exists $\theta\in \Aut(\partial\beta\omega)$ such that for all $x\in [0,1]^\bbN/\Fin$, $\cU\in\partial\beta\omega$: $ \lim_{n\to\cU}\eta(x)=\lim_{n\to\theta(\cU)}x,  $
		and \eqref{eq:1} follows.
	\end{proof}
	
	\begin{proof}[Proof of \eqref{eq:2}]
		Suppose there were $u\in \UHS[(k_n)_n]$ and an infinite $S\subseteq \bbN$ such that
		$u=_S 1$ but $\varphi(u)\notin Z_{\theta(S)}(\UHS[(l_n)_n]).$
		Then there is $T\subseteq S$ infinite such that
		\[ \varepsilon:= \liminf_{n\in\theta(T)} \dHSk{l_n}(u'_n,Z(\U(l_n)))>0, \]
		where $u'=\varphi(u)$.
		
		Using Lemma~\ref{th.boundedgeneration}, we find
		$g'_1,\ldots,g'_N,h'_1,\ldots,h'_N\in\UHS[(l_n)_n]$
		and involutions $v',w'\in\UHS[(l_n)_n]$ such that
		$v'w'\neq w'v'$, $v'=_{\theta(T)^c}1$, and $w'=_{\theta(T)^c}1$.
		Moreover,
		\begin{equation}\label{dd}
			\begin{aligned}
				v'Z &= u'^{g'_1}u'^{g'_2}\ldots u'^{g'_N}Z,\\
				w'Z &= u'^{h'_1}u'^{h'_2}\ldots u'^{h'_N}Z.
			\end{aligned}
		\end{equation}
		with $Z = Z(\UHS[(l_n)_n])$.
		Let    $v'=\varphi(v)$, $w'=\varphi(w)$, then, using \eqref{eq:1} we find that 
		$ \lim_{n \in T^c} \dHSk{k_n}(v_n, \{I_{k_n}, -I_{k_n} \}) =  \lim_{n \in T^c} \dHSk{k_n}(w_n, \{I_{k_n}, -I_{k_n} \})= 0$     and $vw\neq wv$, but by (\ref{dd}) also $v,w\in Z_T$, a contradiction.
	\end{proof}
	
	\begin{proof}[Proof of {\eqref{eq:3}} from {\eqref{eq:2}}]
		
		For $S \in \pow(\bbN) / \Fin,$    define
		$ \rho_S: \UHS[(k_n)_n] \to \UHS[(k_n)_{n\in S}] $ by $\rho_S( [u_n]_n) = [u_n]_{n\in S}$
		and
		$ \iota_S: \UHS[(k_n)_{n\in S}] \to \UHS[(k_n)_n]  ,$
		by $\rho_S(\iota_S(u)) = u$ and $\iota_S(u) =_{S^c} 1$ for all $u \in \UHS[(k_n)_{n\in  S}]$. Both $\rho_S$ and $\iota_S$ are homomorphisms.
		Using ${\eqref{eq:2}}$, we find that $\chi_S := \rho_{\theta(S)} \circ \varphi \circ \iota_{S^c}$ defines a homomorphism $\chi_S: \UHS[(k_n)_{n\in S^c }]\to \prod_{n\in \theta(S)} Z(\U(l_n))/\Fin$. However, $\UHS[(k_n)_{n\in S^c}] \cong \prod_{n\in S^c} \SU(k_n) / \Fin$ has the property that every element is a commutator, therefore every homomorphism from $\UHS[(k_n)_{n\in S^c}] $ to an abelian group is trivial. In particular, $\chi_S$ is trivial.
		
		Then for all $u\in \UHS[(k_n)_n]$ with $u=_S 1$, we have
		$u=\iota_{S^c}(\rho_{S^c}(u))$, and hence $\rho_{\theta(S)}(\varphi(u)) =\chi_S(\rho_{S^c}(u)) =1_{\theta(S)}    ,$ therefore $\varphi(u) =_{\theta(S)} 1$.
		
	\end{proof}
	
	\section{Metric reduced products of matrix algebras}\label{sec:matrix-reduced-products}
	
	There are now essentially two routes to the study of $*$-homomorphisms between metric reduced products of matrix algebras. One is to note that such a $*$-homomorphism induces a homomorphism between the unitary groups, which can be approached using the methods from the foregoing sections and uniquely determines the $*$-homomorphism.  
	The other route is to prove coordinate recognition directly and apply Ulam stability for matrix algebras. Both routes are viable and have their own complications, see \cite{AlekseevThom}.
	
	In agreement with the earlier introduced notation for metric groups (see Notation~\ref{not.not1}), for $a=[a_n]_n,b=[b_n]_n\in \MHS[(k_n)_n]$ and
	$S\in \pow(\bbN)/\Fin$, we write
	$a=_S b$
	if
	$\lim_{n\in S}\|a_n-b_n\|_{2,k_n}=0.$
	
	Again, there is a canonical identification of asymptotically equivalent dimensions: for $(k_n)_n,(l_n)_n \in \seqtoinf$ sequences with $\frac{k_n}{l_n} \to 1$ and $(a_n)_n \in \MHS[(k_n)_n]$ we write $(a_n)_n\updownarrow \MHS[(l_n)_n]$ for the unique element $[b_n]_n$ of $\MHS[(l_n)_n]$ with  \[\lim_n
	\dHSk{k_n\vee l_n}
	\bigl(a_n\oplus 0_{k_n\vee l_n-k_n},b_n\oplus 0_{k_n\vee l_n-l_n}\bigr)=0.\]
	
	We first consider two results that can be deduced straightforwardly from the work already done for unitary groups.
	
	\begin{theorem}\label{th.rig iso alg}
		Assume $\mathrm{OCA}+\mathrm{MA}_{\aleph_1}(\sigma\text{-linked})$.
		Then every unital \hbox{$*$-isomorphism}
		\[
		\varphi: \MHS[(k_n)_n]\to \MHS[(l_n)_n]
		\]
		is trivial, i.e.\ there exists an almost permutation $f : \bbN \to \bbN$ such that  $\displaystyle\lim_{n\to\infty} l_n/ k_{f(n)}=1$ and there exists $w \in \UHS[(l_n)_n]$ such that $\varphi$ maps every $[u_n]_n \in \MHS[(k_n)_n]$ to $w^*vw \in \MHS[(l_n)_n]$, where $v =[v_n]_n = [u_{f(n)}]_n\updownarrow \MHS[(l_n)_n].$
	\end{theorem}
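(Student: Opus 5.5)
The plan is to reduce to the unitary group rigidity, Theorem~\ref{th.rigunitary}, as the introduction to this section suggests. A unital $*$-isomorphism $\varphi:\MHS[(k_n)_n]\to\MHS[(l_n)_n]$ maps unitaries to unitaries. The unitary group of $\MHS[(k_n)_n]$ equals $\UHS[(k_n)_n]$: a unitary $[a_n]_n$ satisfies $\|a_n^*a_n-1\|_{2,k_n}\to0$, and polar decomposition replaces $a_n$ by a unitary $u_n$ with $\|a_n-u_n\|_{2,k_n}\to0$. So $\varphi$ restricts to a group isomorphism $\varphi_U:\UHS[(k_n)_n]\to\UHS[(l_n)_n]$. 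Theorem~\ref{th.rigunitary} then yields an almost permutation $f$ with $l_n/k_{f(n)}\to1$ and automorphisms $\alpha_n\in\Aut(\U(l_n))$ such that $\varphi_U([u_n]_n)=[\alpha_n(v_n)]_n$, where $[v_n]_n=[u_{f(n)}]_n\updownarrow\UHS[(l_n)_n]$.

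Next I would recover $\varphi$ on all of $\MHS[(k_n)_n]$. Every element of operator norm at most $1$ is an average of two unitaries, $a=\frac12(u+u^*)+\ldots$, or more simply every element is a linear combination of four unitaries, and this can be arranged coordinatewise. Hence $\varphi$ is determined by $\varphi_U$ via linearity. The reindexing and $\updownarrow$ maps are themselves linear on $\MHS$ and agree with their unitary versions, so it suffices to show that the coordinatewise maps $\alpha_n$ may be taken to be $\Ad_{w_n}$ for unitaries $w_n$ (eventually). Then $w=[w_n]_n$ gives $\varphi(x)=w^*vw$, after renaming $w_n$ as $w_n^*$.

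The main obstacle is identifying the $\alpha_n$. The paper's proof of Proposition~\ref{prop_stab} shows that, after discarding finitely many indices, each $\alpha_n$ can be taken to be either $u\mapsto\Ad_{w_n}(u)$ or $u\mapsto\Ad_{w_n}(\overline u)$, with the determinant twists removed by Lemma~\ref{lem:product-form-det-twist}. I must exclude the contragredient type on any infinite set $I$. Suppose it occurs on $I$ and fix $t\in\mathbb R$ small. Take the unitary $e^{itP}$ with $P=[p_n]_n$ a projection whose coordinates on $I$ have normalized trace $1/2$. Linearity of $\varphi$ on the real span gives
\[
\varphi(e^{itP})=\varphi(1+(e^{it}-1)P)=1+(e^{it}-1)\varphi(P).
\]
On the other hand, the conjugate-type formula gives $1+(e^{-it}-1)Q$ on $I$, where $Q$ is the image projection. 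Comparing spectra, or traces of the corresponding coordinates, on $I$ forces $e^{it}=e^{-it}$ for all small $t$, which is a contradiction.

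More cleanly, I would use complex linearity: $\varphi(i\cdot1)=i\cdot1$, while the conjugate-type map sends the central unitary $[i I]_n$ to $[-iI]$ on $I$. Since $\varphi$ is unital and complex linear, this is impossible. So eventually $\alpha_n=\Ad_{w_n}$, and the formula for $\varphi$ follows by linearity.
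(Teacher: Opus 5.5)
Your proposal is correct and follows essentially the same route as the paper: you restrict $\varphi$ to a group isomorphism $\UHS[(k_n)_n]\to\UHS[(l_n)_n]$, apply Theorem~\ref{th.rigunitary}, use that $\varphi$ fixes scalars to make the $\alpha_n$ eventually inner, and conclude because unitaries span $\MHS[(k_n)_n]$ linearly. For the scalar step, a contragredient coordinate would send $[iI]_n$ to $-i$ on an infinite set, which is exactly the fact the paper uses. You are more explicit than the paper in citing the proof of Proposition~\ref{prop_stab} for the standard/contragredient form of the $\alpha_n$; the paper uses this only implicitly when it says ``$\varphi$ fixes scalar multiples of the identity''.
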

	\begin{proof}
		Let $\varphi: \MHS[(k_n)_n]\to \MHS[(l_n)_n]$ be a unital $*$-isomorphism. Since the unitary group of the $C^*$-algebra $\MHS[(k_n)_n]$ consists precisely of the classes $[u_n]_n$ with $u_n \in \U(k_n)$, $\varphi$ restricts to a group isomorphism
		$ \varphi_{\U} : \UHS[(k_n)_n]\to \UHS[(l_n)_n].$
		Applying Theorem~\ref{th.rigunitary} yields an almost permutation $f: \mathbb{N} \to \mathbb{N}$ such that $k_{f(n)}/l_n\to 1$ and such that there exists a sequence $(\alpha_n)_n$ with
		$
		\alpha_n\in \Aut(\U(l_n))
		$
		and for every $[u_n]_n\in \UHS[(k_n)_n]$,
		$
		\varphi_{\U}([u_n]_n)=[\alpha_n(v_n)]_n,
		$
		where $[v_n]_n = [u_{f(n)}]_n\updownarrow \UHS[ (l_n)_n]$ is the image of $[u_{f(n)}]_n$ under asymptotic-dimension identification.
		Since $\varphi$ fixes scalar multiples of the identity, we find that for all but finitely many $n$ there exists $w_n \in \U(l_n)$ such that  $\alpha_n$ is the inner automorphism of $\U(l_n)$ given by conjugation with $w_n$. Defining $w = [w_n]_n \in \UHS[(l_n)_n]$, it follows that the map
		\[  \MHS[(k_n)_n]\to \MHS[(l_n)_n] : [u_n]_n \mapsto \Ad_{w}([u_{f(n)}]_n\updownarrow \MHS[(l_n)_n])\]
		is a  $*$-isomorphism that coincides with $\varphi$ on all unitary elements of $\MHS[(k_n)_n]$, and therefore this map is actually equal to $\varphi$.	
	\end{proof}
	
	Similarly, we deduce triviality (in $\mathsf{ZFC}$) of all product form isomorphisms between tracial reduced product $C^*$-algebras $\MHS[(k_n)_n]$ and $\MHS[(l_n)_n]$: after the canonical identification of asymptotically equivalent dimensions every such isomorphism is induced coordinatewise by inner automorphisms of the matrix algebras $\M_{l_n}(\mathbb C)$.

	\begin{proposition}\label{prop.prod form alg}
		Let
		$
		\varphi:\MHS[(k_n)_n]\longrightarrow \MHS[(l_n)_n]
		$
		be a unital $*$-isomorphism of product form. Then
		$k_n/l_n \to 1$ and there exist unitaries $w_n\in \U(l_n)$ such that for all $[a_n]_n \in \MHS[(k_n)_n]$, if
		$
		[b_n]_n=[a_n]_n \updownarrow \MHS[(l_n)_n],
		$
		then
		$
		\varphi([a_n]_n)=[w_n^*b_nw_n]_n.
		$
	\end{proposition}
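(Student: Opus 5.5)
The plan is to reduce to Proposition~\ref{prop_stab} by restricting to unitary groups, in the same way that Theorem~\ref{th.rig iso alg} is reduced to Theorem~\ref{th.rigunitary}.

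Let $\varphi$ be implemented by maps $h_n\colon \M_{k_n}(\bbC)\to \M_{l_n}(\bbC)$. The restriction $\varphi_{\U}\colon \UHS[(k_n)_n]\to\UHS[(l_n)_n]$ is a group isomorphism, because a unital $*$-isomorphism maps the unitary group onto the unitary group. We need this restriction to be of product form. For $u\in\U(k_n)$ the element $h_n(u)$ need not be unitary, but the sequence $(h_n(u_n))_n$ represents a unitary of $\MHS[(l_n)_n]$, so $\|h_n(u_n)^*h_n(u_n)-1\|_{2,l_n}\to 0$ uniformly along any choice of sequences. This uniformity follows from a diagonal/contradiction argument of the same kind as in Lemma~\ref{lem:product-form-asymptotic}.

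Replace $h_n(u)$ by the unitary part of its polar decomposition, $h'_n(u)$. To keep operator norms bounded, first cut $h_n(u)$ down by a bounded functional calculus; the sequence is bounded modulo $\Fin$ only after a choice of representatives, so one uses $\min(\|\cdot\|,C)$-truncation. The maps $h'_n\colon\U(k_n)\to\U(l_n)$ then implement $\varphi_{\U}$, since the polar unitary is $\|\cdot\|_2$-close to any near-unitary contraction. Hence $\varphi_{\U}$ is of product form.

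Proposition~\ref{prop_stab} now gives $k_n/l_n\to1$ and automorphisms $\alpha_n\in\Aut(\U(l_n))$ with $\varphi_{\U}([u_n]_n)=[\alpha_n(v_n)]_n$, where $[v_n]_n=[u_n]_n\updownarrow\UHS[(l_n)_n]$. The proof of Proposition~\ref{prop_stab} shows more: eventually each $\alpha_n$ is either $\Ad_{w_n}$ or $\Ad_{w_n}\circ(\text{complex conjugation})$.

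The conjugate-linear option is excluded by linearity. Since $\varphi(z1)=z1$, we have $\varphi_{\U}([zI]_n)=[zI]_n$, while conjugation sends $zI$ to $\bar zI$. So the set of $n$ on which conjugation occurs must be finite. Otherwise we restrict to that infinite set, take $z=i$, and get a contradiction, since $[iI]_n\ne[-iI]_n$ on an infinite set. Thus $\alpha_n=\Ad_{w_n^*}$ eventually, and we modify finitely many coordinates arbitrarily.

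Finally, the map $[a_n]_n\mapsto[w_n^*b_nw_n]_n$, with $[b_n]_n=[a_n]_n\updownarrow\MHS[(l_n)_n]$, is a unital $*$-homomorphism that agrees with $\varphi$ on unitaries. Every element of a unital $C^*$-algebra is a linear combination of four unitaries, so the two maps coincide; this is the same closing step as in the proof of Theorem~\ref{th.rig iso alg}.

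The main obstacle is the first step: checking that $\varphi_{\U}$ is genuinely of product form, that is, that the unitarized maps $h'_n$ still implement $\varphi$. Here I would argue by contradiction on an infinite set of coordinates, using that $\|x-\mathrm{polar}(x)\|_{2}\le\|x^*x-1\|_{2}$ for bounded $x$.
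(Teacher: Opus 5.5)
Your proposal is correct and follows essentially the same route as the paper. You restrict $\varphi$ to $\UHS[(k_n)_n]$, unitarize the lifts $h_n(u)$ via polar decomposition to see that $\varphi_{\U}$ is of product form, apply Proposition~\ref{prop_stab}, use that $\varphi$ fixes scalars to make the $\alpha_n$ eventually inner, and conclude because unitaries span the algebra; your explicit exclusion of the conjugate-linear alternative (testing $[iI]_n$ on the infinite set of conjugation coordinates) and your attention to boundedness of the lifts only spell out details the paper leaves implicit.
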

	
	\begin{proof}
		As in the proof of Theorem~\ref{th.rig iso alg}, we first find that $\varphi$ restricts to a group isomorphism
		$ \varphi_{\U} : \UHS[(k_n)_n]\to \UHS[(l_n)_n].$
		Furthermore, we claim that $\varphi_{\U}$ is again of product form. Let's fix a sequence $(h_n)_n$ of maps $h_n : \M_{k_n}(\mathbb{C}) \to \M_{l_n}(\mathbb{C})$ such that $\varphi = \varphi[(h_n)_n]$. We note that then, since $\varphi$ maps unitary elements of $\MHS[(k_n)_n]$ to unitary elements of $\MHS[(l_n)_n]$: $\sup_{u\in \U(k_n)} \|h_n(u)^*h_n(u)-1\|_{2,l_n} \to 0$.
		
		
		An elementary argument considering the polar decomposition of $h_n(u)$ then gives that for every $n$ we can fix a map $\tilde{h}_n: \U(k_n) \to \U(l_n)$ such that
		\[
		\sup_{u\in \U(k_n)}\|\widetilde h_n(u)-h_n(u)\|_{2,l_n} \to 0.
		\]
		Hence for every sequence $(u_n)_n$ of unitaries $u_n \in \U(k_n)$:
		$
		[h_n(u_n)]_n=[\widetilde h_n(u_n)]_n,
		$
		so the restriction $\varphi_{\U}$ is induced by the sequence of maps
		$
		\widetilde h_n:\U(k_n)\to \U(l_n).
		$
		Now Proposition~\ref{prop_stab} applies to $\varphi_{\U}$. It yields:
		\begin{enumerate}
			\item $k_n/l_n \to 1$;
			\item there exists a sequence $(\alpha_n)_n$ with
			$
			\alpha_n\in \Aut(\U(l_n))
			$
			such that for every unitary $[u_n]_n\in \UHS[(k_n)_n]$,
			$
			\varphi_{\U}([u_n]_n)=[\alpha_n(v_n)]_n,
			$
			where 
			$
			[v_n]_n  = [u_n]_n \updownarrow \UHS[(l_n)_n].
			$
		\end{enumerate}
		We conclude just as in the proof of Theorem~\ref{th.rig iso alg}: since $\varphi$ is the identity on scalar multiples of the identity, for all but finitely many $n$ the automorphism $\alpha_n$ is inner, therefore $\varphi$ coincides after the asymptotic-dimension identification with an inner automorphism of $\MHS[(k_n)_n]$ on all unitary elements. Hence $\varphi$ is itself inner up to identification of asymptotically equivalent dimensions.
	\end{proof}
	
	The analysis of product form $*$-homomorphisms between tracial reduced products of matrix algebras is very closely tied to the study of Ulam stability for $*$-homomorphisms between matrix algebras with respect to the trace norm, which was recently pursued by Vadim Alekseev and the second author in \cite{AlekseevThom}.
	
	We conclude this paper by a further exploration of the second mentioned route towards rigidity phenomena for tracial reduced product matrix algebras: proving coordinate recognition directly and then applying such Ulam stability for matrix algebras (relying on a result from \cite{AlekseevThom}). Following this route one expects to be able to further exploit the rich $C^*$-algebraic structure. We first state the relevant Ulam stability theorem, rephrased from \cite{AlekseevThom}, that we will make use of for this purpose. 
	
	\begin{definition}\label{def:matrix-approx-star}
		Let $\varepsilon>0$ and let $\varphi\colon \M_{n}(\mathbb C)\to \M_{m}(\mathbb C)$ be a map. We say that $\varphi$ is an \emph{$\varepsilon$-trace-$*$-homomorphism} if for all $a,b\in \M_n(\mathbb C)$ with $\|a\|,\|b\|\le 1$ and all $\lambda\in\bbC$ with $|\lambda|\le 1$ one has
		\begin{align*}
			\|\varphi(a+b)-\varphi(a)-\varphi(b)\|_{2,m} &\le \varepsilon,\\
			\|\varphi(\lambda a)-\lambda\varphi(a)\|_{2,m} &\le \varepsilon,\\
			\|\varphi(ab)-\varphi(a)\varphi(b)\|_{2,m} &\le \varepsilon,\\
			\|\varphi(a^*)-\varphi(a)^*\|_{2,m} &\le \varepsilon.
		\end{align*}
		If in addition
		$
		\|\varphi(1)-1\|_{2,m}\le \varepsilon,
		$
		then we call $\varphi$ \emph{$\varepsilon$-unital}.
	\end{definition}
	
	\begin{theorem}\label{alekseevthom}\cite[Theorem 3.5]{AlekseevThom}
		For every $\varepsilon>0$ there exists $\delta >0$ such that the following holds.
		
		For every $n,m\in \mathbb{N}$ and every map
		$ \varphi:\M_n(\mathbb{C})\to\M_m(\mathbb{C})  $
		which is a $\delta$-trace-$*$-homomorphism on the operator norm unit ball and is $\delta$-unital, there exists a map of the form
		\[ \psi(a)=w^* \,\mathrm{diag}(\underbrace{a,\ldots,a}_{r\text{ times}},0)w\quad (a\in \M_n(\mathbb{C})),  \]
		with $w\in\U(m')$, $r\geq 0$, and an isometry $W:\mathbb{C}^m\to \mathbb{C}^{m'}$ such that $ |m'-m|\leq \varepsilon m   $
		and
		\[  \sup_{\|a\|\leq 1}\| \psi(a)W-W\varphi(a)\|_{2,m'} \leq \varepsilon.  \]
		In particular, after stabilizing the target by codimension $o(m)$, the map $\varphi$ is approximated on the operator-norm unit-ball by a genuine $*$-homomorphism into a corner.	
	\end{theorem}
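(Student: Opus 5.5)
The approach is to reduce the matrix-algebra statement to group stability on a well-chosen finite subgroup of $\U(n)$, and then to pass from that group back to all of $\M_n(\bbC)$ using the multiplicative and additive control. We cannot use Corollary~\ref{app:cor-dCOT-sequential} on all of $\U(n)$ directly, because $\varphi$ is not assumed to be quasi-Lipschitz. Instead we restrict to a finite group that linearly spans $\M_n(\bbC)$: the Heisenberg (clock-and-shift) group $H_n\subseteq\U(n)$, generated by the diagonal matrix of $n$-th roots of unity, the cyclic shift, and the scalars $\mu_n I_n$. We also assume a preliminary step: first replace $\varphi(1)$ by a projection $P$ and $\varphi$ on $\U(n)$ by unitaries in the corner $P\M_m(\bbC)P$, using the polar decomposition as in Proposition~\ref{prop.prod form alg}. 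This costs $O(\delta)$ in $\|\cdot\|_{2,m}$, and the resulting map $g\mapsto\varphi(g)$ is an $O(\delta)$-representation of the finite group $H_n$ into $\U(\operatorname{rank}P)$.

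Step one applies finite-group Hilbert--Schmidt stability (Gowers--Hatami, or Theorem~\ref{app:thm-dCOT} with $K=H_n$ discrete, where quasi-Lipschitz is vacuous). This gives $m'\le(1+\varepsilon)m$, a genuine representation $\rho\colon H_n\to\U(m')$ and an isometry $W$ with $\|\varphi(g)-W^*\rho(g)W\|_{2,m}$ small uniformly in $g$. Approximate homogeneity gives $\varphi(\mu I)\approx\mu\varphi(1)$. Hence, after discarding a piece of $\rho$ of small normalized dimension, the central character of $\rho$ is the identity. Representations of $H_n$ with this central character are multiples $r$ of the unique irreducible one, namely the standard inclusion. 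After a unitary change of basis, $\rho(g)=\mathrm{diag}(g,\dots,g,*)$, and we define $\psi(a)=w^*\mathrm{diag}(a,\dots,a,0)w$ accordingly.

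Step two extends from $H_n$ to arbitrary $a$ with $\|a\|\le1$. Expanding $a$ in the basis $H_n$ and using additivity would cost $n^2\delta$, which is useless, so we use multiplicativity and orthogonality instead. The matrix units satisfy $e_{ij}=e_{i1}e_{11}e_{1j}$, where $e_{i1}$ is a product of a shift power and the projection $e_{11}$. The projection $e_{11}$ is itself an average over the clock group, but we avoid averaging. Instead we show that $\varphi(a)$ and $W^*\psi(a)W$ are both approximately equivariant under conjugation by the $H_n$-images, using $\varphi(gag^{*})\approx\varphi(g)\varphi(a)\varphi(g)^*$. We then compare the two maps through the conditional expectation onto the commutant $1\otimes\M_r$.

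Concretely, I would consider $\Delta(a)=W\varphi(a)W^*-\psi(a)$. Twisted covariance makes $\Delta(a)$ approximately satisfy $\Delta(gag^*)\approx\rho(g)\Delta(a)\rho(g)^*$. Averaging $\rho(g)^*\Delta(gag^*)\rho(g)$ over $H_n$ gives a map that is exactly equivariant. Such a map sends $a$ to $a\otimes X$ plus a scalar term, and the $\varepsilon$-unitality and $*$-multiplicativity force $X\approx 0$.

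The main obstacle is this last passage: controlling $\varphi(a)$ against its $H_n$-average without paying a factor growing with $n$. The errors $\varphi(gag^*)-\varphi(g)\varphi(a)\varphi(g)^*$ are uniformly small in $\|\cdot\|_{2,m}$, and averaging a uniformly small family keeps it small. So the crux is showing that the averaged map agrees with $\varphi$ itself on the unit ball. For this I would try the multiplicative identity $\varphi(a)\approx\varphi(g)^*\varphi(gag^*)\varphi(g)$ applied pointwise before averaging, so no additivity is used. If that fails, I would fall back to the ultraproduct formulation: pass to $\prod\M_m/\cU$, where $\varphi$ induces a genuine unital $*$-homomorphism of $\M_n$, and hence one of the form $\psi$ exactly. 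Uniformity in $n$ would then need the finite-group step to be quantitative independently of $|H_n|$, which Gowers--Hatami provides.
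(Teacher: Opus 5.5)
\textbf{The gap is in Step two.} Step one is sound. Theorem~\ref{app:thm-dCOT} applies to the finite group $K=H_n$ with constants independent of $|H_n|$. Averaging over the center isolates the identity central character at small dimensional cost. Stone--von Neumann uniqueness then makes the surviving part a multiple of the standard inclusion.

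The problem is your claim that the averaged map $\tilde\Delta(a)=\frac1{|H_n|}\sum_{g\in H_n}\rho(g)^*\Delta(gag^*)\rho(g)$, being exactly equivariant, ``sends $a$ to $a\otimes X$ plus a scalar term''. This is false.
\begin{itemize}
\item $\tilde\Delta$ is not linear. Equivariance constrains $\tilde\Delta(a)$ only through the stabilizer of $a$ under conjugation by $H_n$. For generic $a$ this stabilizer is the center, which acts trivially, so $\tilde\Delta$ can be prescribed arbitrarily at one point of each orbit.
\item The claim fails even for linear maps. Conjugation by $H_n$ splits $\M_n(\bbC)$ into the $n^2$ lines $\bbC X^iZ^j$, which carry pairwise distinct characters of $\bbZ/n\times\bbZ/n$. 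Hence the equivariant linear maps $\M_n(\bbC)\to\M_n(\bbC)\otimes\M_r(\bbC)$ are $X^iZ^j\mapsto X^iZ^j\otimes Y_{ij}$ with $n^2$ independent $Y_{ij}$.
\item The form ``$a\otimes X$ plus scalar'' would need equivariance under all of $\U(n)$, whose adjoint representation is $\bbC\oplus\mathfrak{sl}_n$.
\end{itemize}
So averaging does give a map uniformly close to $\Delta$ (that part is fine), but nothing forces it to be small, and unitality and multiplicativity have nothing to act on. The pointwise identity $\varphi(a)\approx\varphi(g)^*\varphi(gag^*)\varphi(g)$ only restates approximate covariance. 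The underlying obstruction is that $H_n$ is too small: writing a general contraction takes about $n^2$ elements of $H_n$, and each additivity step costs $\delta$.

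\textbf{The ultraproduct fallback does not close the gap.} For fixed $n$ it works by compactness of the unit ball of $\M_n(\bbC)$. But uniformity in $n$ is the whole content of the theorem. For $n_k\to\infty$, the induced homomorphism $\prod_k\M_{n_k}(\bbC)/\cU\to\prod_k\M_{m_k}(\bbC)/\cU$ is controlled only on the image of $\prod_k H_{n_k}$. That group generates a proper von Neumann subalgebra: one can choose unitaries $a_k$ (e.g.\ Haar-typical ones) with $\sup_{g\in H_{n_k}}|\tau_{n_k}(g^*a_k)|\to0$, and then $[a_k]_k$ is orthogonal to it. Identifying the homomorphism from this data is essentially the original problem. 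The uniformity of Gowers--Hatami only covers Step one.

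\textbf{The missing idea is to use the whole unitary group.} This is the route the paper describes for \cite{AlekseevThom}. Every contraction is $a=\frac12(u+v)$ with $u,v\in\U(n)$, so $\varphi(a)\approx\frac12(\varphi(u)+\varphi(v))$ up to $O(\delta)$, and passing from the group to the unit ball is immediate.

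Your reason for avoiding $\U(n)$ does not hold. Theorem~\ref{app:thm-dCOT} allows any compatible bi-invariant metric on $K$, and $\varphi|_{\U(n)}$ is quasi-Lipschitz for the operator-norm distance:
\begin{itemize}
\item approximate additivity and homogeneity give $\varphi(u)-\varphi(v)\approx 2\varphi(\frac12(u-v))$;
\item the bound $\|\varphi(a)\|_{2,m}\le 1+O(\delta)$ on the unit ball holds first for unitaries and then for averages of two unitaries. It follows from $\|\varphi(u)\|_{2,m}^2\le|\tau_m(\varphi(u^*)\varphi(u))|+\|\varphi(u)^*-\varphi(u^*)\|_{2,m}\|\varphi(u)\|_{2,m}$ together with $\tau_m(\varphi(u^*)\varphi(u))\approx\tau_m(\varphi(1))\approx1$.
\end{itemize}
The cost moves to the representation theory. $\U(n)$ has many irreducibles with central character $z$ (e.g.\ highest weight $(2,0,\dots,0,-1)$), and these must be ruled out using the algebra structure. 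That is where the real work lies.
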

	
	We will further on write $a^{\oplus r}$ for \[\mathrm{diag}(\underbrace{a,\ldots,a}_{r\text{ times}}) \in \M_{kr} (\bbC), \quad (a\in \M_k(\bbC)). \]
	
	The proof of this Ulam stability result uses a refinement of the approach taken here to prove Proposition~\ref{prop.prod form alg}: consider the restriction of the $\delta$-trace-$*$-homomorphism to the unitary group, first correct this to a genuine continuous representation by a small perturbation which can then be further analyzed through representation theoretic means and finally lift again to the full matrix algebra. Performing this strategy on the individual factors rather than on the full reduced product allows for a finer result which in turn can be used to infer stronger statements on product form homomorphisms between tracial reduced product $C^*$-algebras. From Theorem~\ref{alekseevthom} one can deduce the following strengthening of Proposition~\ref{prop.prod form alg}.
	
	\begin{theorem}~\label{th.prod form alg non iso}
		Let $(k_n)_n$ and $(l_n)_n$ be sequences of positive integers with $k_n,l_n\to\infty$, and let $\varphi:\MHS[(k_n)_n]\to \MHS[(l_n)_n]$
		be a unital $*$-homomorphism of product form. 
		Then there exist:
		\begin{itemize}
			\item integers $m_n,r_n$ with $m_n = k_n r_n$ and
			$ m_n /l_n \to 1$;
			\item unitaries $w_n\in \U(m_n)$;
		\end{itemize}
		such that, defining
		\[
		\alpha_n(x)
		:=
		w_n^*
		x^{\oplus r_n}
		w_n
		\qquad (x\in \M_{k_n}(\mathbb C)),
		\]
		one has:
		$\varphi([x_n]_n) = [\alpha_n(x_n)]_n \updownarrow M^{HS}_\infty[(l_n)_n]$ for all $[x_n]_n \in M^{HS}_\infty[(k_n)_n]$.
	\end{theorem}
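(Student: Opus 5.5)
We will apply Theorem~\ref{alekseevthom} coordinatewise to a sequence $(h_n)_n$ implementing $\varphi$. The argument mirrors Lemma~\ref{lem:product-form-asymptotic}, and the main extra work is to control the stabilization and the multiplicity $r_n$.

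\textbf{Step 1: asymptotic homomorphism estimates.} Fix maps $h_n:\M_{k_n}(\bbC)\to \M_{l_n}(\bbC)$ with $\varphi([x_n]_n)=[h_n(x_n)]_n$ for all operator-norm bounded sequences. We may assume $\|h_n(x)\|\le C$ on the unit ball; otherwise we truncate by functional calculus, which is harmless in $\|\cdot\|_{2}$ since $\varphi$ is contractive in operator norm on the reduced product. We then claim that for every $\delta>0$ all but finitely many $h_n$ are $\delta$-trace-$*$-homomorphisms and are $\delta$-unital. Suppose instead that for some $\delta$ there is an infinite set $I$ and witnesses $a_n,b_n,\lambda_n$ ($n\in I$) violating one of the four inequalities of Definition~\ref{def:matrix-approx-star}, or violating unitality. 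Extend the witnesses by $0$ off $I$; the resulting bounded sequences define elements of $\MHS[(k_n)_n]$. Applying additivity, homogeneity (after passing to a subsequence with $\lambda_n\to\lambda$), multiplicativity, $*$-preservation or unitality of $\varphi$ to these elements gives a contradiction, exactly as in Lemma~\ref{lem:product-form-asymptotic}. So there is $\delta_n\to0$ with each $h_n$ a $\delta_n$-trace-$*$-homomorphism that is $\delta_n$-unital.

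\textbf{Step 2: applying Theorem~\ref{alekseevthom}.} Choose $\varepsilon_n\to0$ slowly enough that $\delta_n\le\delta(\varepsilon_n)$ eventually. Theorem~\ref{alekseevthom} then yields $m_n'$ with $|m_n'-l_n|\le\varepsilon_n l_n$, $r_n\ge0$, $w_n'\in\U(m_n')$ and isometries $W_n:\bbC^{l_n}\to\bbC^{m_n'}$ such that
\[
\sup_{\|a\|\le1}\|w_n'^*(a^{\oplus r_n}\oplus0)w_n'W_n-W_nh_n(a)\|_{2,m_n'}\le\varepsilon_n .
\]
Put $a=1$ and use $h_n(1)\approx1$. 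This shows that $P_n:=w_n'^*(1^{\oplus r_n}\oplus 0)w_n'$ satisfies $\|P_nW_n-W_n\|_{2,m_n'}\to0$. Hence $\mathrm{rank}\,P_n\ge l_n(1-o(1))$, since the range of $W_n$ is almost inside that of $P_n$, and trivially $\mathrm{rank}\,P_n=k_nr_n\le m_n'\le(1+\varepsilon_n)l_n$. Set $m_n:=k_nr_n$; then $m_n/l_n\to1$.

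\textbf{Step 3: identifying $\varphi$.} Let $w_n\in\U(m_n)$ be the unitary obtained by compressing $w_n'$ to the range of $P_n$, so that $\alpha_n(x)=w_n^*x^{\oplus r_n}w_n$ is the corresponding unital corner map. The displayed estimate gives $W_nh_n(a)W_n^*\approx \alpha_n(a)\oplus0$ in normalized trace norm, uniformly over the unit ball, up to $o(1)$ errors. These errors come from the defect of $W_nW_n^*$, which has rank $o(l_n)$, and from the discrepancy between $P_n$ and $W_nW_n^*$, which is also small in normalized trace norm because both are projections whose normalized ranks tend to $1$ and which almost dominate each other. Now transport $\bbC^{l_n}$ into $\bbC^{m_n}$ via $W_n$ followed by the corner, correcting on a subspace of codimension $o(l_n)$ and absorbing the resulting unitary into $w_n$. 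This yields $\varphi([x_n]_n)=[\alpha_n(x_n)]_n\updownarrow\MHS[(l_n)_n]$.

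The main obstacle is this last bookkeeping step: aligning the three spaces $\bbC^{l_n}$, $\mathrm{ran}(W_n)\subseteq\bbC^{m_n'}$ and $\mathrm{ran}(P_n)\cong\bbC^{m_n}$ with only $o(1)$ error in normalized trace norm, uniformly on the unit ball. The first thing we would try is to take the polar part of $P_nW_n$ as the comparison isometry, and to use $\|xy\|_2\le\|x\|\,\|y\|_2$ repeatedly to bound each error term.
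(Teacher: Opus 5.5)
Your proposal is correct and follows essentially the same route as the paper. Both arguments extract $\delta_n$-trace-$*$-homomorphism estimates from the coordinatewise implementation and apply Theorem~\ref{alekseevthom} with a diagonal choice of tolerances. Both then use $a=1$ and unitality to compare $\mathrm{rank}\,\psi_n(1)=k_nr_n$ with $\mathrm{rank}\,W_nW_n^*=l_n$, and finally absorb the $o(l_n)$-codimensional discrepancies and the zero summand into the choice of $w_n$ and the $\updownarrow$ identification. The one step you leave implicit, which the paper states explicitly, is passing from sequences of contractions to arbitrary operator-norm bounded sequences by linearity.
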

	In particular, all unital $*$-homomorphisms of product form between tracial reduced products of matrix algebras are injective.
	
	\begin{proof}
		Fix maps
		$
		h_n:M_{k_n}(\mathbb C)\to M_{l_n}(\mathbb C)
		$
		such that $\varphi = \varphi[(h_n)_n]$. Since $\varphi$ is a unital $*$-homomorphism, there exists a sequence $(\delta_n)_n$ with $
		\delta_n\searrow 0$
		such that $h_n$ is a $\delta_n$-trace-$*$-homomorphism on the operator-norm unit ball and is $\delta_n$-unital. 
		
		We apply Theorem~\ref{alekseevthom} (trace-norm Ulam stability for matrix algebras) to the maps $h_n$.  Since $\delta_n\to0$, after discarding finitely
		many coordinates we obtain:
		\begin{itemize}
			\item integers $m'_n$ with
			$
			{m'_n}/{l_n}\to1;
			$
			\item isometries
			$
			W_n:\mathbb C^{l_n}\to\mathbb C^{m'_n};
			$
			\item finite-dimensional $*$-homomorphisms
			$
			\psi_n:M_{k_n}(\mathbb C)\to M_{m'_n}(\mathbb C);
			$
		\end{itemize}
		such that
		$
		\sup_{\|x\|\le1}
		\|\psi_n(x)W_n-W_nh_n(x)\|_{2,m'_n}\to 0,
		$
		where each $\psi_n$ has the
		standard form
		$
		\psi_n(x)=w_n^*
		\bigl(x^{\oplus r_n}\oplus 0_{m'_n-k_nr_n}\bigr)w_n.
		$
		
		Put	$m_n:=k_nr_n$.
		We show next that ${m_n}/{l_n}\to1,$ using the unitality of $h_n$.  Let $P_n:=W_nW_n^*$
		be the projection onto the range of $W_n$, and let $E_n:=\psi_n(1)$.
		The estimate above applied to $x=1$ gives $\|E_nW_n-W_nh_n(1)\|_{2,m'_n}\to0.$
		Since $\|h_n(1)-1\|_{2,l_n}\to0$, we get $\|E_nP_n-P_n\|_{2,m'_n}\to0.$
		Hence $P_n$ is asymptotically contained in the range of $E_n$.  Since
		$\operatorname{rank}(P_n)=l_n,
		\operatorname{rank}(E_n)=m_n,$
		and since already $m'_n/l_n\to1$, it indeed follows that
		$
		\frac{m_n}{l_n}\to1.
		$
		
		At this point we argue through similar estimates as in Lemma~\ref{lem:product-form-stabilization}:
		if two maps agree after compression to a subspace of asymptotically full
		dimension, then they induce the same map on the reduced product.  For
		every $\|x\|\le1$, we have that $\psi_n(x)$
		is close in normalized Hilbert--Schmidt norm to	$W_nh_n(x)W_n^*$
		up to the complement of $P_n$, and that complement has relative dimension
		$
		\frac{m'_n-l_n}{m'_n}\to0.
		$
		Thus
		$
		[h_n(x_n)]_n
		=
		[\psi_n(x_n)]_n
		\updownarrow
		M^{\mathrm{HS}}_\infty[(l_n)_n]
		$
		for every sequence of contractions $(x_n)_n$.
		Moreover, since $\psi_n(x)=w_n^*(x^{\oplus r_n}\oplus 0_{m'_n-m_n})w_n$
		and
		$
		\frac{m'_n-m_n}{m'_n}\to0,
		$
		the zero summand is invisible in the reduced product and the same
		reduced-product map is induced by
		\[
		\alpha_n:M_{k_n}(\mathbb C)\to M_{m_n}(\mathbb C),
		\qquad
		\alpha_n(x)=w_n^*x^{\oplus r_n}w_n,
		\]
		after the canonical comparison between the asymptotically equivalent dimension
		sequences $(m_n)_n$ and $(l_n)_n$.
		
		Hence, for every sequence of contractions $(x_n)_n$,
		$
		\varphi([x_n]_n)
		=
		[\alpha_n(x_n)]_n
		\updownarrow
		M^{\mathrm{HS}}_\infty[(l_n)_n].
		$
		Finally, by linearity the same equality holds for every operator-norm bounded
		sequence $(x_n)_n$. 
		This proves the theorem.		
	\end{proof}

	In addition to strong stability properties, tracial reduced products of matrix algebras exhibit a very strong form of coordinate recognition simply because the center of $\MHS[(k_n)_n]$ is precisely $\ell_\infty/c_0\cong C(\partial\beta\bbN)$.
	
	\begin{lemma}\label{lem:mhs-center}
		The center of $\MHS[(k_n)_n]$ consists precisely of the classes
		$
		[\lambda_n I_{k_n}]_n$
		with $(\lambda_n)_n\in \ell_\infty$.
		Consequently,
		$
		Z(\MHS[(k_n)_n])\cong \ell_\infty/c_0\cong C(\partial\beta\bbN),
		$
		and the central idempotents are exactly the elements
		$
		e_S:=[1_S(n)I_{k_n}]_n$
		with $S\in \pow(\bbN)/\Fin$.
	\end{lemma}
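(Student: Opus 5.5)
The inclusion of the scalar classes in the center is immediate, since each $\lambda_n I_{k_n}$ commutes with everything coordinatewise. The substance lies in the converse. Let $a=[a_n]_n$ be central and set $\lambda_n:=\tau_{k_n}(a_n)$, so that $|\lambda_n|\le\|a_n\|$ and $(\lambda_n)_n\in\ell_\infty$. The aim is to show $\|a_n-\lambda_nI_{k_n}\|_{2,k_n}\to 0$. Suppose not. Then there are $\varepsilon>0$ and an infinite set $I$ with $\|a_n-\lambda_n I\|_{2,k_n}\ge\varepsilon$ for $n\in I$. We will build unitaries $u_n\in\U(k_n)$ for $n\in I$ with $\|u_na_n-a_nu_n\|_{2,k_n}\ge c(\varepsilon,\sup\|a_n\|)>0$, and set $u_n=1$ off $I$. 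Then $[u_n]_n\in\MHS[(k_n)_n]$ fails to commute with $a$, a contradiction.

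To construct $u_n$, replace $a_n$ by $b_n:=a_n-\lambda_nI$, which has trace zero and satisfies $\|b_n\|_{2}\ge\varepsilon$. Split $b_n$ into real and imaginary parts; one of them, self-adjoint and traceless, has normalized $2$-norm at least $\varepsilon/2$. Call it $h_n$.

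The cleanest route is a Haar-averaging identity. For $u$ Haar distributed in $\U(k)$ and $x$ traceless,
\[
\int_{\U(k)}\|ux-xu\|_{2,k}^2\,du=2\|x\|_{2,k}^2-2\,\mathrm{Re}\int \tau_k(u^*x^*uxu^*u)\,du .
\]
The second term reduces to $|\tau_k(x)|^2$-type quantities plus $O(1/k^2)\|x\|_2^2$ by the standard second-moment formula $\int u^*yu\,du=\tau_k(y)I$. Concretely, $\int \tau_k(x^*u^*xu)\,du=\tau_k(x^*\,\tau_k(x)I)=|\tau_k(x)|^2=0$. Hence $\int\|ux-xu\|_2^2\,du=2\|x\|_2^2$, and some unitary $u_n$ achieves $\|u_nb_n-b_nu_n\|_{2,k_n}\ge\sqrt2\,\varepsilon$. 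Since $u_nb_n-b_nu_n=u_na_n-a_nu_n$, this completes the contradiction, and we do not even need the self-adjoint reduction.

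A more hands-on alternative is to diagonalize $h_n$ and use a permutation unitary that swaps the upper and lower halves of its spectrum. The averaging argument avoids that bookkeeping, so it is the one we will use.

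For the consequences, the map $\ell_\infty\to Z(\MHS[(k_n)_n])$, $(\lambda_n)\mapsto[\lambda_nI_{k_n}]_n$, is a surjective $*$-homomorphism. Since $\|\lambda_nI\|_{2,k_n}=|\lambda_n|$, its kernel is exactly $c_0$. The quotient norm $\limsup|\lambda_n|$ agrees with the $C^*$-norm of $\ell_\infty/c_0\cong C(\partial\beta\bbN)$, although only the algebraic isomorphism is needed. For the idempotents, a central projection is $[\lambda_nI]_n$ with $[\lambda_n^2-\lambda_n]=0$ and $\lambda_n=\bar\lambda_n$ modulo $c_0$. Then $\lambda_n$ is eventually close to $\{0,1\}$, and rounding gives $1_S(n)$ for $S=\{n:\mathrm{Re}\lambda_n>1/2\}$, well defined modulo $\Fin$.

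The main obstacle is the Haar second-moment computation. The key point is that the cross term reduces to $|\tau(x)|^2$, and this needs the identity $\int u^*yu\,du=\tau(y)I$, which holds by Schur's lemma via irreducibility of the standard representation. Everything else is routine.
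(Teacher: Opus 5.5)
Your proof is correct and is essentially the paper's argument. Both Haar-average over $\U(k_n)$ using $\int_{\U(k)}u^*yu\,du=\tau_k(y)I$, pick one (near-)extremal unitary per coordinate, and let centrality of $a$ against $[u_n]_n$ force $\|a_n-\tau_{k_n}(a_n)I_{k_n}\|_{2,k_n}\to0$. The paper does this directly through $\|a_n-\tau_{k_n}(a_n)I\|_{2,k_n}\le\sup_u\|a_n-ua_nu^*\|_{2,k_n}$, and you do it by contradiction through the exact identity $\int\|ux-xu\|_{2,k}^2\,du=2\|x-\tau_k(x)I\|_{2,k}^2$, so the $O(1/k^2)$ remark and the self-adjoint reduction are unnecessary. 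One small point: the statement concerns all central idempotents, not only projections, but your rounding needs no self-adjointness, since $\lambda_n(\lambda_n-1)\to0$ alone forces $\dist(\lambda_n,\{0,1\})\to0$.
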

	
	\begin{proof}
		Every bounded scalar sequence clearly defines a central element. Conversely, let
		$a=[a_n]_n\in Z(\MHS[(k_n)_n])$. For each $n$, averaging over $\U(k_n)$ gives
		\begin{align*}
			\|a_n-\tau_{k_n}(a_n)I_{k_n}\|_{2,k_n}
			&= \left\|\int_{{\U}(k_n)} \bigl(a_n-ua_nu^*\bigr)\,d\mu_n(u)\right\|_{2,k_n} \\
			&\le \sup_{u\in {\U}(k_n)}\|a_n-ua_nu^*\|_{2,k_n}.
		\end{align*}
		where $\mu_n$ denotes the Haar probability measure on $\U(k_n)$. Choose
		$u_n\in {\U}(k_n)$ with
		\[
		\|a_n-\tau_{k_n}(a_n)I_{k_n}\|_{2,k_n}
		\le \|a_n-u_na_nu_n^*\|_{2,k_n}+1/n.
		\]
		Set $u:=[u_n]_n$. Then $u\in \UHS[(k_n)_n]\subseteq\MHS[(k_n)_n]$. Since
		$a$ is central, we have $ua=au$, hence
		$
		\lim_n \|a_n-u_na_nu_n^*\|_{2,k_n}=0.
		$
		Therefore
		$
		\lim_n \|a_n-\tau_{k_n}(a_n)I_{k_n}\|_{2,k_n}=0,
		$
		so $a=[\tau_{k_n}(a_n)I_{k_n}]_n$ is represented by a scalar sequence.
		
		For scalar sequences the normalized $2$-norm agrees with the absolute value,
		so the center is exactly $\ell_\infty/c_0$. The identification with
		$C(\partial\beta\bbN)$ is the classical Gelfand--Stone identification. Under
		this identification, idempotents correspond to characteristic functions of
		clopen subsets of $\partial\beta\bbN$, equivalently to elements of
		$\pow(\bbN)/\Fin$. This gives the claimed description of the~$e_S$.
	\end{proof}

	\begin{theorem}\label{th:mhs-boundary}
		Let $(k_n)_n$ and $(l_n)_n$ be sequences of natural numbers with
		$\lim_n k_n=\lim_n l_n=\infty$. Let
		$
		\varphi\colon \MHS[(k_n)_n]\to \MHS[(l_n)_n]
		$
		be a unital ring isomorphism. Then there exists a unique automorphism
		$\theta\in \Aut(\partial\beta\bbN)$ such that for all
		$a,b\in \MHS[(k_n)_n]$ and all $S\in \pow(\bbN)/\Fin$,
		\[
		a=_S b \Longleftrightarrow \varphi(a)=_{\theta(S)}\varphi(b).
		\]
		Consequently, for every nonprincipal ultrafilter $\cU\in\partial\beta\bbN$,
		$\varphi$ induces a ring isomorphism between the corresponding metric
		ultraproducts along $\cU$ and $\theta(\cU)$.
	\end{theorem}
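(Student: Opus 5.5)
A unital ring isomorphism maps the center onto the center and central idempotents onto central idempotents. By Lemma~\ref{lem:mhs-center}, the central idempotents of both algebras are exactly the elements $e_S$ with $S\in\pow(\bbN)/\Fin$. They form a Boolean algebra under $e\wedge f=ef$ and $\neg e=1-e$, isomorphic to $\pow(\bbN)/\Fin$. This structure is purely ring-theoretic, using only multiplication, subtraction and $1$. So $\varphi$ restricts to a Boolean algebra automorphism $S\mapsto\theta(S)$ defined by $\varphi(e_S)=e_{\theta(S)}$. By Stone duality this is the same as an automorphism of $\partial\beta\bbN$ (with the usual convention on direction). Uniqueness will follow once the displayed equivalence is proved. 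Indeed, $a=_S b$ with $a=1$, $b=1-e_S$ holds iff $S$ is (mod $\Fin$) all of the relevant set, so the equivalence pins down $\theta$ on idempotents.

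Next I reformulate $a=_S b$ algebraically. For $c=a-b$, we have $a=_S b$ iff $e_S c=0$ in $\MHS[(k_n)_n]$. Indeed, $\|e_Sc\|_{2}$ in the reduced product is $\limsup_{n\in S}\|c_n\|_{2,k_n}$. Then
\[
\varphi(e_Sc)=\varphi(e_S)\bigl(\varphi(a)-\varphi(b)\bigr)=e_{\theta(S)}\bigl(\varphi(a)-\varphi(b)\bigr).
\]
Because $\varphi$ is additive, multiplicative and injective, $e_Sc=0$ iff $e_{\theta(S)}(\varphi(a)-\varphi(b))=0$. This is exactly $\varphi(a)=_{\theta(S)}\varphi(b)$.

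For the ultraproduct statement, I would write the ultraproduct along $\cU$ as the quotient of $\MHS[(k_n)_n]$ by the ideal $J_\cU=\{c:\ c=_S 0 \text{ for some } S\in\cU\}$. This holds because the ultralimit of $\|c_n\|_{2}$ is zero iff for every $\epsilon$ the set where $\|c_n\|_2<\epsilon$ lies in $\cU$. One subtlety is needed here. ``$c=_S0$ for some $S\in\cU$'' is equivalent to $\lim_\cU\|c_n\|_2=0$: given the latter, a diagonal choice yields $S\in\cU$ on which $\|c_n\|_2\to0$ only if $\cU$ is a P-point. So in general I would instead define $J_\cU$ as the union over $\epsilon$ of the sets $\{c: \|e_S c\|\le\epsilon \text{ for some } S\in\cU\}$, taken as an intersection over $\epsilon$.

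To stay ring-theoretic, I would use the identity $\|e_Sc\|_2=0$ together with the following observation. $c\in J_\cU$ iff for every $S\in\cU$... To avoid needing norms, I will express membership in $J_\cU$ via central elements. We have $c\in J_\cU$ iff $c^*c$ lies in the ideal, and the scalar sequence $\tau(c_n^*c_n)$ tends to $0$ along $\cU$. Since $*$ need not be preserved by a ring isomorphism, I will use instead the characterization $c\in J_\cU$ iff $c=zd$ for some central $z$ with $\lim_\cU z=0$ and some $d$. Here $z=[\|c_n\|_2^{1/2}]$ and $d_n=c_n/z_n$ (with $d_n=0$ where $z_n=0$), and $d$ is bounded in $\|\cdot\|_2$-sense after truncation; I would check this operator-norm boundedness carefully and otherwise truncate by central idempotents. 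Central elements $z$ with $\lim_\cU z=0$ are exactly those lying in the maximal ideal of $C(\partial\beta\bbN)$ at $\cU$. A ring automorphism of $C(\partial\beta\bbN)$ preserving idempotents via $\theta$ sends the maximal ideal at $\cU$ to the one at $\theta(\cU)$, since maximal ideals correspond to ultrafilters of idempotents. Hence $\varphi(J_\cU)=J_{\theta(\cU)}$ and $\varphi$ descends to the ultraproducts.

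The main obstacle is this last step: ring isomorphisms need not be $*$-preserving or continuous. Everything must therefore be phrased through central idempotents and the maximal ideals of the center, not through norms. The factorization $c=zd$ is the delicate point.
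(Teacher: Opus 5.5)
Your first two steps match the paper's proof: you read off the Boolean automorphism $\theta$ from the central idempotents via Lemma~\ref{lem:mhs-center}, and you transport $a=_Sb\iff e_S(a-b)=0$ through $\varphi$. Your uniqueness check with $a=1$, $b=1-e_S$ also works. It gives $R\cap S=^*\emptyset\iff\theta'(R)\cap\theta(S)=^*\emptyset$ for all $R,S$, and hence $\theta'=\theta$, although your sentence justifying it is garbled.

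The gap is in the ultraproduct part. The characterization ``$\lim_{n\to\cU}\|c_n\|_{2,k_n}=0$ iff $c=zd$ with $z$ central, $\lim_\cU z=0$, $d\in\MHS[(k_n)_n]$'' is false whenever $\cU$ is not a P-point, and such $\cU$ exist in $\mathsf{ZFC}$. This is the same obstruction you found for your first definition of $J_\cU$, only disguised. Here is a counterexample:
\begin{itemize}
\item Take a partition $(B_j)_j$ of $\bbN$ with no $B_j\in\cU$, but such that every $A\in\cU$ meets some $B_j$ in an infinite set.
\item For $n\in B_j$, let $p_n\in\M_{k_n}(\bbC)$ be a projection of rank $\lceil k_n/j\rceil$.
\item Then $c=[p_n]_n$ lies in the kernel of the map to the ultraproduct along $\cU$.
\item Suppose $c=zd$ with $z=[\lambda_nI_{k_n}]_n$, $\lim_\cU\lambda_n=0$ and $\sup_n\|d_n\|\le C$. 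On $A:=\{n:|\lambda_n|C<1/2\}\in\cU$ you get
\[
\|p_n-\lambda_nd_n\|_{2,k_n}\ge\|p_n-\lambda_np_nd_np_n\|_{2,k_n}\ge\tfrac12\|p_n\|_{2,k_n}\ge\tfrac{1}{2\sqrt j}\quad(n\in A\cap B_j).
\]
\item This does not tend to $0$ along the infinite set $A\cap B_j$, so $c\neq zd$.
\end{itemize}
Truncating by central idempotents cannot rescue this, because that would require some $S\in\cU$ along which $\|p_n\|_{2,k_n}\to0$. So your maximal-ideal argument only controls the products $zd$ with $z$ in the maximal ideal of the center. In general that set is strictly smaller than the ultraproduct kernel.

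The missing idea is a description of the kernel that uses only central idempotents and needs no P-point assumption. For nonprincipal $\mathcal V$, one has $\lim_{n\to\mathcal V}\|c_n\|_{2,k_n}=0$ iff every $S\in\mathcal V$ contains an infinite $T$ with $e_Tc=0$, where $T$ need \emph{not} belong to $\mathcal V$.
\begin{itemize}
\item Forward direction: for each $m$ the set $\{n\in S:\|c_n\|_{2,k_n}<1/m\}$ lies in $\mathcal V$, so it is infinite. Pick $t_1<t_2<\cdots$ with $t_m$ in the $m$-th such set, and let $T=\{t_m\}$.
\item Converse: if the $\mathcal V$-limit is $\delta>0$, then $S=\{n:\|c_n\|_{2,k_n}>\delta/2\}\in\mathcal V$ contains no such $T$.
\end{itemize}
This condition involves only products with central idempotents and inclusion modulo $\Fin$. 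Your second step therefore shows that $\varphi$ maps the kernel along $\theta(\cU)=\{S:\theta(S)\in\cU\}$ into the kernel along $\cU$, and $\varphi^{-1}$ gives the reverse inclusion. This is exactly the paper's argument, phrased there contrapositively, and it avoids both norms and the maximal ideals of the center.
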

	
	\begin{proof}
		For $S\in \pow(\bbN)/\Fin$, let
		\[
		e_S:=[1_S(n)I_{k_n}]_n\in Z(\MHS[(k_n)_n]),\quad e'_S:=[1_S(n)I_{l_n}]_n\in Z(\MHS[(l_n)_n]).
		\]
		By Lemma~\ref{lem:mhs-center}, every central idempotent of
		$\MHS[(k_n)_n]$ is of the form $e_T$ for a unique
		$T\in \pow(\bbN)/\Fin$. Since $\varphi$ is a unital ring isomorphism, it
		preserves the center and idempotents. Hence there is a unique Boolean
		algebra automorphism
		$
		\theta\colon \pow(\bbN)/\Fin\to \pow(\bbN)/\Fin
		$
		such that
		$
		\varphi(e_S)=e'_{\theta(S)}$
		for all $S\in \pow(\bbN)/\Fin$.
		
		By Stone duality, this Boolean algebra automorphism is equivalently an
		automorphism of $\partial\beta\bbN$, 
		given by
		$
		\theta(\cU):=\{S \in \pow(\bbN)/\Fin: \theta(S)\in \cU\}
		$
		which we denote by the same symbol.
		
		Now let $a=[a_n]_n$ and $b=[b_n]_n$ belong to $\MHS[(k_n)_n]$. Then $a=_S b\Longleftrightarrow e_S(a-b)=0.$
		Applying $\varphi$, we obtain
		\[
		e_S(a-b)=0
		\Longleftrightarrow \varphi(e_S)\bigl(\varphi(a)-\varphi(b)\bigr)=0
		\Longleftrightarrow e'_{\theta(S)}\bigl(\varphi(a)-\varphi(b)\bigr)=0,
		\]
		which is exactly
		$
		a=_S b \Longleftrightarrow \varphi(a)=_{\theta(S)}\varphi(b).
		$
		This proves the main assertion.
		
		For the ultraproduct statement,  we check as follows that for $a=[a_n]_n$ and $b=[b_n]_n$ belonging to $\MHS[(k_n)_n]$, if $[a]_{\theta(\cU)}=[b]_{\theta(\cU)}$, then $[\varphi(a)]_{\cU}=[\varphi(b)]_{\cU}$.
		If $[\varphi(a)]_{\cU} \neq [\varphi(b)]_{\cU}$, then there were $S \in \cP(\bbN) / \Fin$, $\varepsilon>0$ and representing sequences $(a'_n)_n$ and $(b'_n)_n$ for $\varphi(a)$ and $\varphi(b)$ such that $\theta(S) \in \mathcal{U}$ and $\forall n\in \theta(S)\  \|a'_n-b'_n\|_{2,k_n}> \varepsilon$. Then also $[a]_{\theta(\cU)} \neq [b]_{\theta(\cU)}$, for else, since $S \in \theta(\mathcal{U})$, there were $T\subseteq S$ infinite with $a =_T b$ and then by the equivalence proved above,
		$\varphi(a)=_{\theta(T)}\varphi(b)$ which would contradict $\theta(T) \subseteq \theta(S)$. 
		Therefore $\varphi$
		descends to a ring homomorphism on the ultraproduct quotients, and the same
		argument applied to $\varphi^{-1}$ shows that this induced map is an
		isomorphism.
	\end{proof}
	
	\begin{remark}
		We record the simple observation that part of the foregoing argument can be salvaged for an arbitrary unital ring morphism $\varphi\colon \MHS[(k_n)_n]\to \MHS[(l_n)_n]$, as long as central elements are mapped to central elements. Namely, it is still the case that a Boolean algebra morphism 
		$
		\theta\colon \pow(\bbN)/\Fin\to \pow(\bbN)/\Fin
		$
		is induced such that $\varphi(e_S)=e'_{\theta(S)}$ for all $S\in \pow(\bbN)/\Fin$.
	\end{remark}

	The next theorem says that every center-preserving unital  $*$-homomorphism $\varphi:\MHS[(k_n)_n]\to \MHS[(l_n)_n]$ between tracial reduced products can be decomposed as
	$
	\varphi=(1-p)\varphi\oplus p \varphi
	$
	for some central projection $e'_T = p \in \MHS[(l_n)_n]$ with $(1-p)\varphi$ having nonmeagre central kernel and $p\varphi$ induced coordinatewise, along a finite-to-one map $T \to \bbN$, by finite-dimensional unital $*$-homomorphisms. This theorem can be seen as a trace-norm variant of known rigidity results for reduced products of matrix algebras in the operator norm (see \cite{VignatiYilmaz}).     
	
	\begin{theorem}
		\label{thm:center-preserving-tracial-wep}
		Assume $\mathsf{OCA}+\mathsf{MA}_{\aleph_1}(\sigma\text{-linked})$.
		Let
		$
		\varphi:\MHS[(k_n)_n]\to \MHS[(l_n)_n]
		$
		be a unital $*$-homomorphism satisfying
		\[
		\varphi\bigl(Z(\MHS[(k_n)_n])\bigr)
		\subseteq
		Z(\MHS[(l_n)_n]).
		\]
		Then there exists a set $T\subseteq \bbN$ with corresponding central projection
		$
		p=e'_T\in Z(\MHS[(l_n)_n])
		$
		such that:
		\begin{enumerate}
			\item the $(1-p)$-summand of $\varphi$ has large central kernel, i.e.: the ideal
			$
			\mathcal I_{(1-p)\varphi}
			:=
			\{S\in\pow(\mathbb N):(1-p)\varphi(e_S)=0\}
			$
			is nonmeagre;
			\item the $p$-summand  
			$p\varphi : \MHS[(k_n)_n]\to p\MHS[(l_n)_n] \cong  \MHS[(l_n)_{n \in T}]$
			is trivial in the following sense. There exist a finite-to-one map $g:T\to\mathbb N$, integers $r_n,m_n$ with $m_n = r_nk_{g(n)}$ and $ m_n / l_n \to 1$ for $n \in T$, and for every $n \in T$ a $*$-homomorphism
			$
			\alpha_n:\M_{k_{g(n)}}(\mathbb C)\to \M_{m_n}(\mathbb C)
			$
			of the form
			$
			\alpha_n(x)
			=
			w_n^* x^{\oplus r_n} w_n;
			$
			with $w_n \in U(m_n)$ such that, after the canonical identification of the asymptotically equivalent dimension sequences $(m_n)_{n\in T}$ and $(l_n)_{n\in T}$,
			$
			p\varphi([a_n]_n)
			=
			[\alpha_n(a_{g(n)})]_{n\in T}
			$
			for all $[a_n]_n \in \MHS[(k_n)_n]$.
		\end{enumerate}
	\end{theorem}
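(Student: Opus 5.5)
The plan is to first extract a Boolean-algebra homomorphism from $\varphi$, then use the forcing axioms to make that homomorphism trivial on a large piece, and finally apply Ulam stability coordinatewise on that piece.

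\emph{Step 1: the induced homomorphism $\theta$.} Since $\varphi$ is unital and maps $Z(\MHS[(k_n)_n])$ into $Z(\MHS[(l_n)_n])$, the Remark following Theorem~\ref{th:mhs-boundary} together with Lemma~\ref{lem:mhs-center} gives a unital Boolean algebra homomorphism
\[
\theta\colon \pow(\bbN)/\Fin\to \pow(\bbN)/\Fin,\qquad \varphi(e_S)=e'_{\theta(S)} .
\]
The argument in the proof of Theorem~\ref{th:mhs-boundary} then shows one direction of coordinate recognition: if $a=_S b$, then $e_S(a-b)=0$, so $e'_{\theta(S)}(\varphi(a)-\varphi(b))=0$, that is, $\varphi(a)=_{\theta(S)}\varphi(b)$.

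\emph{Step 2: triviality of $\theta$ on a large piece.} Here I would invoke Farah's structure theorem for homomorphisms $\pow(\bbN)/\Fin\to\pow(\bbN)/\Fin$ under $\mathsf{OCA}+\mathsf{MA}_{\aleph_1}$. It provides a set $T\subseteq\bbN$ and a function $g\colon T\to\bbN$ with two properties:
\begin{itemize}
\item $\theta(S)\cap T=^* g^{-1}[S]$ for all $S$;
\item the ideal $\{S : \theta(S)\setminus T=^*\emptyset\}$ is nonmeagre.
\end{itemize}
Put $p:=e'_T$. Since $(1-p)\varphi(e_S)=e'_{\theta(S)\setminus T}$, this ideal is exactly $\mathcal I_{(1-p)\varphi}$, which gives item~(1). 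The map $g$ is finite-to-one: $\theta(\{k\})=0$ forces $g^{-1}(\{k\})$ to be finite, and discarding finitely many points of $T$ handles any remaining exceptions uniformly.

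\emph{Step 3: product form along $g$, the main obstacle.} For $p\varphi$, Step~1 now reads: if $a=_S b$ then $p\varphi(a)=_{g^{-1}[S]}p\varphi(b)$. So $p\varphi$ is a ``coordinate-respecting along $g$'' homomorphism into the reduced product over $T$.

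I would apply the metric lifting theorem of~\cite{debondtvignati}, in its version for homomorphisms that respect coordinates along a finite-to-one map, which again uses $\mathsf{OCA}+\mathsf{MA}_{\aleph_1}(\sigma\text{-linked})$. This should produce maps $h_n\colon \M_{k_{g(n)}}(\bbC)\to\M_{l_n}(\bbC)$ for $n\in T$ such that
\[
p\varphi([a_n]_n)=[h_n(a_{g(n)})]_{n\in T}.
\]
This is the step I expect to be hardest. The cited theorem is formulated for coordinate-fixing isomorphisms, whereas here $g$ need not be injective and $p\varphi$ need not be surjective, so the lifting argument must be checked in that generality. If it fails as stated, the first fallback is to partition $T$ into finitely many pieces on which $g$ is injective. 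On each piece one would use the coordinate-fixing version, applied to the induced map between reduced products over $g[T_i]$ and $T_i$, and then glue the pieces together.

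\emph{Step 4: the coordinatewise structure.} Given the lift $(h_n)_{n\in T}$, the contradiction argument of Lemma~\ref{lem:product-form-asymptotic} shows that for some $\delta_n\searrow0$ each $h_n$ is a $\delta_n$-trace-$*$-homomorphism on the operator-norm unit ball and is $\delta_n$-unital. The argument goes by choosing bad witnesses on an infinite set and trivial entries elsewhere; finite-to-one-ness of $g$ lets one choose these witnesses on disjoint fibers.

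Theorem~\ref{alekseevthom} then applies coordinatewise, exactly as in the proof of Theorem~\ref{th.prod form alg non iso}. It yields integers $r_n$, the dimensions $m_n=r_nk_{g(n)}$ with $m_n/l_n\to1$ (using unitality of $h_n$), and unitaries $w_n\in\U(m_n)$. After the canonical identification of $(m_n)_{n\in T}$ with $(l_n)_{n\in T}$, the $*$-homomorphisms $\alpha_n(x)=w_n^*x^{\oplus r_n}w_n$ induce $p\varphi$. This gives item~(2).
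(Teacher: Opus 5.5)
Your overall route is the paper's: center $\to$ Boolean endomorphism $\theta$, then the $\mathsf{OCA}+\mathsf{MA}$ structure theorem giving $T$ and $g$, then metric lifting, then Theorem~\ref{alekseevthom} coordinatewise. Steps 1, 2 and 4 are fine, including your fibre-finiteness argument via $\theta(\{k\})=0$. The gap is in Step 3, where you anticipated trouble. There you rely on an unverified ``version'' of the lifting theorem for maps that respect coordinates along a non-injective $g$, and your fallback fails. Finite-to-one does not mean uniformly bounded fibres, so in general $T$ cannot be split into finitely many pieces on which $g$ is injective. For example, take $|g^{-1}(n)|=n$, $l_i=k_{g(i)}$ and $\varphi([a_n]_n)=[a_{g(i)}]_i$. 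This is a unital center-preserving $*$-homomorphism, and any $T$ with a nonmeagre ideal in (1) must be cofinite, so the fibres really are unbounded.

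The missing idea is to regroup the target along the fibres, so that the map becomes coordinate-fixing in the ordinary sense. Put $F_n:=g^{-1}(\{n\})$ and $C_n:=\prod_{i\in F_n}\M_{l_i}(\bbC)$, with $C_n=\{0\}$ if $F_n=\emptyset$, normed by the maximum of the normalized $2$-norms. Because $g$ is finite-to-one, reindexing is an isomorphism $\MHS[(l_i)_{i\in T}]\cong\prod_n C_n/\Fin$. Under it, your condition $p\varphi(a)=_{g^{-1}[S]}p\varphi(b)$ becomes $\psi(a)=_S\psi(b)$. So $\psi$ is coordinate-fixing, and the metric lifting theorem applies as stated. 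The paper applies it to this non-surjective $\psi$, so your surjectivity worry is also unfounded. A lift $H_n$ with values in $C_n$ splits into components $h_i$, $i\in F_n$, which is exactly $p\varphi([a_n]_n)=[h_i(a_{g(i)})]_{i\in T}$.

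You also skipped a technical point. The lifting theorem needs uniformly bounded diameter, so apply it to the restriction to reduced products of operator-norm balls. The paper uses radius $2$, relying on contractivity of $*$-homomorphisms. Afterwards, extend the $h_i$ to all of $\M_{k_{g(i)}}(\bbC)$ by approximate linearity. With these two repairs, your Step 4 goes through as written.
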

	\begin{proof}
		Since
		$
		\varphi(Z(\MHS[(k_n)_n]))\subseteq Z(\MHS[(l_n)_n]),
		$
		there exists an endomorphism $\theta$ of the Boolean algebra $\pow(\omega)/\Fin$    such that
		$ \varphi(e_S) = e'_{\theta(S)}$ for all $S \in \pow(\omega)/\Fin.$	
		By \cite[Theorem 4.7]{triviso} there exist a set $T \subseteq \mathbb{N}$
		and a finite-to-one map $g: T \to \mathbb{N}$ such that:
		\begin{enumerate}
			\item $\{ S\in \pow(\bbN) : \theta(S \pm \Fin) \subseteq (T\pm \Fin)\}$ is nonmeagre,
			\item $\theta(S) \cap T =  g^{-1}[S]$ for all $S\in \pow(\bbN)/\Fin$.
		\end{enumerate}
		Considering the central projection corresponding to $T$:
		$
		p=e'_T\in Z(\MHS[(l_n)_n]),
		$
		this means that $\mathcal I_{{(1-p)\varphi}}=\{S\in\pow(\mathbb N):(1-p)\varphi(e_S)=0\}$
		is nonmeagre and that $\varphi(e_S) =_T e'_{g^{-1}[S]}$ for every $S \subseteq \bbN$.
		
		It remains to prove the structural description of the $p$-summand
		\[p\varphi : \MHS[(k_n)_n]\to p\MHS[(l_n)_n] \cong  \MHS[(l_n)_{n \in T}].\]
		We first regroup the target coordinates according to the fibres of $g$.  For
		$n\in\mathbb N$, set $F_n:=g^{-1}(\{n\})\subseteq T$, a finite subset of $\bbN$. Define
		$
		C_n:=\prod_{i\in F_n}M_{l_i}(\mathbb C),
		$
		with the provision that in case $F_n = \emptyset$, we set $C_n =\{0\}$.
		Composing the morphism $p\varphi$ with the canonical re-indexing isomorphism $\MHS[(l_n)_{n \in T}] \xrightarrow{\cong} \prod_n \prod_{i \in g^{-1}[\{n\}]} \M_{l_i}(\bbC) / \Fin$, one obtains a \hbox{$*$-homomorphism}
		$ \psi :  \MHS[(k_n)_n] \to \prod_n C_n / \Fin$ which is coordinate fixing:
		$a=_Sb \Rightarrow \psi(a) =_S \psi(b)$, for all $a,b \in \MHS[(k_n)_n], S \in \pow(\bbN).$
		
		Since $p\psi$ is a
		unital $*$-homomorphism, it is contractive in operator norm, so restricts to a map from the normalised Hilbert-Schmidt reduced product of the closed balls of radius 2 of the algebras $\M_{k_n}(\mathbb C)$ to the normalised Hilbert-Schmidt reduced product of the closed balls of radius 2  of the algebras $C_n$.  
		Applying the metric lifting theorem \cite[Theorem 2.3]{debondtvignati} under $\mathsf{OCA}+\mathsf{MA}_{\aleph_1}(\sigma\text{-linked})$ to this restriction of $\psi$, it follows that there exist maps
		$H_n: \{x\in M_{k_n}(\mathbb C):\|x\|\le 2\} \to \{y\in C_n:\|y\|\le 2\}$
		such that for every sequence $(a_n)_n$ with $a_n \in \M_{k_n}(\mathbb C), \|a_n\|\le2$ for all $n$:
		$
		\psi([a_n]_n)=[H_n(a_n)]_n.
		$
		
		Writing
		$
		H_n(x)=\bigl(h_i(x)\bigr)_{i\in F_n},
		$
		we obtain, for every $n \in \bbN$ and $i\in F_n$, a map
		$
		h_i:\{x\in M_{k_n}(\mathbb C):\|x\|\le2\}
		\to M_{l_i}(\mathbb C)
		$
		for which the following lifting identity holds:
		$
		p\varphi([a_n]_n)
		=
		[h_i(a_{g(i)})]_{i\in T}
		$
		for every sequence $(a_n)_n$ satisfying $a_n \in \M_{k_n}(\mathbb C),\|a_n\|\le2$ for all $n$. Since $p\varphi$ is a $*$-homomorphism, there exists a sequence $(\varepsilon_i)_{i\in T}$ with $\varepsilon_i\searrow 0$ such that each map $h_i$ is a $\varepsilon_i$-unital $\varepsilon_i$-trace-$*$-homomorphism. By almost linearity it then follows that each of the maps $h_i$ can be extended to the entire algebra $M_{k_{g(i)}}(\mathbb C)$ in such a way that $p\varphi([a_n]_n)= [h_i(a_{g(i)})]_{i\in T}$ for all $[a_n]_n \in \MHS[(k_n)_n]$. Since $\lim_i k_{g(i)} = \infty$, we may conclude by applying Theorem~\ref{th.prod form alg non iso} to the product-form unital $*$-homomorphism
		represented by the maps
		$
		h_i:M_{k_{g(i)}}(\mathbb C)\to M_{l_i}(\mathbb C),$ for $i\in T.
		$\end{proof}


	\section*{Acknowledgements}
		Ben De Bondt is funded by the Deutsche Forschungsgemeinschaft (DFG, German Research Foundation) under Germany's Excellence Strategy EXC 2044/2\,-390685587, Mathematics Münster: Dynamics--Geometry--Structure. 
	
	Andreas Thom acknowledges funding by the Deutsche Forschungsgemeinschaft (SPP 2026 ``Geometry at infinity'').
	
	ChatGPT 5.5 was used to assist in drafting parts of this manuscript. All content was reviewed and substantially revised by the authors, who are responsible for the final text.

\end{document}